\newcounter{citedtheorems}
\numberwithin{equation}{section}
\newcounter{thestep}
\newcounter{thebigstep}
\newtheorem{defn}{Definition}[section]
\newtheorem{theorem}[defn]{Theorem}
\newtheorem*{theorem-m}{Theorem \ref{main-theorem}}
\newtheorem*{thm-p2a}{Theorem \ref{t:p2a}}
\newtheorem*{thm-seq}{Theorem \ref{t:seq}}
\newtheorem*{thm-m}{Main Theorem}
\newtheorem*{theorem-abs1}{Theorem \ref{ind-theorem}}
\newtheorem*{theorem-abs2}{Theorem \ref{a23}}
\newtheorem*{theorem-abs3}{Theorem \ref{ind-new}}
\newtheorem*{theorem-abs4}{Theorem \ref{m1}}
\newtheorem*{thm-x}{Theorem}
\newtheorem{thm-lit}[citedtheorems]{Theorem}
\newtheorem{defn-lit}[citedtheorems]{Definition}
\newtheorem{fact-lit}[citedtheorems]{Fact}
\newtheorem{fact}[defn]{Fact}
\newtheorem{cor}[defn]{Corollary}
\newtheorem{thesis}[defn]{Thesis}
\newtheorem{defn-claim}[defn]{Definition/Claim}
\newtheorem*{defn-in}{Definition \arabic{section}.\arabic{equation}}
\newtheorem*{claim-in}{Claim \arabic{section}.\arabic{equation}}
\newtheorem{concl}[defn]{Conclusion}
\newtheorem{conv}[defn]{Convention}
\newtheorem{claim}[defn]{Claim}
\newtheorem{subclaim}[defn]{Subclaim}
\newtheorem{lemma}[defn]{Lemma}
\newtheorem{obs}[defn]{Observation}
\newtheorem{rmk}[defn]{Remark}
\newtheorem{ntn}[defn]{Notation}
\newtheorem{disc}[defn]{Discussion}
\newtheorem{qst}[defn]{Question}
\newcommand{\lost}{\L os' }
\newcommand{\los}{\L os }
\newcommand{\br}{\vspace{2mm}}
\newcommand{\eff}{\mathcal{F}}
\newcommand{\gee}{\mathcal{G}}
\newcommand{\tlf}{\trianglelefteq}
\newcommand{\tlfn}{\triangleleft}
\newcommand{\rn}{\operatorname{range}}
\newcommand{\cf}{\operatorname{cof}}
\newcommand{\dom}{\operatorname{dom}}
\newcommand{\mcv}{\mathcal{V}}
\newcommand{\uu}{\mathcal{U}}
\newcommand{{\xw}}{\mathbf{w}}
\newcommand{\vv}{\mathbf{V}}
\newcommand{\crs}{\operatorname{cs}}
\newcommand{\bc}{\mathbf{c}}
\newcommand{\fdp}{f}
\newcommand{\ii}{\mathbf{i}}
\newcommand{\otp}{\operatorname{otp}}
\newcommand{\pr}{\operatorname{Pr}}
\newcommand{\prp}{{\operatorname{Pr}^\one}}
\newcommand{\pri}{{\operatorname{Pr}^{\ii}}}
\newcommand{\zero}{0}
\newcommand{\one}{1}
\newcommand{\ver}{\operatorname{vert}}
\newcommand{\supp}{\operatorname{supp}}
\newcommand{\tv}{\operatorname{tv}}
\newcommand{\ndx}{\operatorname{ind}}
\newcommand{\code}{\operatorname{Code}}
\newcommand{\mct}{\mathcal{T}}
\newcommand{\lgn}{\operatorname{lg}}
\newcommand{\de}{\mathcal{D}}
\newcommand{\ts}{\mathbf{S}}
\newcommand{\trv}{\mathbf{t}} 
\newcommand{\jj}{\mathbf{j}}
\newcommand{\mcp}{\mathcal{P}}
\newcommand{\ba}{\mathfrak{B}}
\newcommand{\lao}{[\lambda]^{<\aleph_0}}
\newcommand{\rstr}{\upharpoonright}
\newcommand{\mci}{\mathcal{I}}
\newcommand{\mcin}{{\mci_0}}
\newcommand{\mciy}{{\mci_1}}
\newcommand{\bad}{\mathcal{P}}
\newcommand{\vp}{\varphi}
\newcommand{\ma}{\mathbf{a}}
\newcommand{\mb}{\mathbf{b}}
\newcommand{\mc}{\mathbf{c}}
\newcommand{\mx}{\mathbf{x}}
\newcommand{\fin}{\operatorname{FI}}
\newcommand{\trg}{T_{\mathbf{rg}}}
\newcommand{\ap}{\operatorname{AP}}
\newcommand{\OP}{\operatorname{OP}}
\newcommand{\bx}{\mathbf{x}}
\newcommand{\by}{\mathbf{y}}
\title{Keisler's order has infinitely many classes}
\author{M. Malliaris and S. Shelah}\thanks{\emph{Thanks:}
Malliaris was partially supported by a Sloan research fellowship, by NSF grant DMS-1300634, 
and by a research membership at MSRI funded through NSF 0932078 000 (Spring 2014). 
Shelah was partially supported by European Research Council grant 338821. 
This is paper 1050 in Shelah's list.}
\address{Department of Mathematics, University of Chicago, 5734 S. University Avenue, Chicago, IL 60637, USA} 
\email{mem@math.uchicago.edu}
\address{Einstein Institute of Mathematics, Edmond J. Safra Campus, Givat Ram, The Hebrew
University of Jerusalem, Jerusalem, 91904, Israel, and Department of Mathematics,
Hill Center - Busch Campus, Rutgers, The State University of New Jersey, 110
Frelinghuysen Road, Piscataway, NJ 08854-8019 USA}
\email{shelah@math.huji.ac.il}
\urladdr{http://shelah.logic.at}
\date{\today}
\begin{document}



\begin{abstract} 
We prove, in ZFC, that there is an infinite strictly descending chain of classes of theories in Keisler's order. 
Thus Keisler's order is infinite and not a well order. Moreover, this chain occurs within the simple unstable theories, considered model-theoretically tame.   
Keisler's order is a central notion of the model theory of the 60s and 70s which compares first-order theories (and implicitly ultrafilters) 
according to saturation of ultrapowers.  
Prior to this paper, it was long thought to have finitely many classes, linearly ordered. The model-theoretic complexity we find is witnessed by a very natural class of theories, 
the $n$-free $k$-hypergraphs studied by Hrushovski.  This complexity reflects the difficulty of amalgamation and appears orthogonal to forking. 
\end{abstract}

\subjclass[2010]{03C20, 03C45 ; 03E05, 05C65, 06E10}


\maketitle



A significant challenge to our understanding of unstable theories in general, and simple theories in particular, has been 
the apparent intractability of the problem of Keisler's order. Defined in 1967 \cite{keisler}, Keisler's order uses saturation of ultrapowers 
to compare the complexity of any two complete, countable first-order theories (basic model-theoretic objects of study).  
The recent articles \cite{MiSh:E74}, \cite{moore} explain the fundamental interest of the classification program 
arising from: 

\begin{qst}[Keisler 1967] Determine the structure of Keisler's order.
\end{qst}


The structure of Keisler's order on the stable theories was settled in Chapter VI of the second author's 1978 book \cite{Sh:c}. 
For a long time, there was little progress on the unstable case; \cite{MiSh:996} gives some history.  
This was due both to the difficulty of constructing ultrafilters and the state of our understanding of unstable theories. 
In the last few years, there has been very substantial progress in this area
(Malliaris \cite{mm-thesis}-\cite{mm5}, Malliaris and Shelah \cite{MiSh:996}-\cite{MiSh:1030}).

Notably, 
Keisler's order was thought to be finite, with perhaps four classes, whose identities were suggested in \cite{Sh:c} Problem VI.0.1.  
After \cite{MiSh:999} and \cite{MiSh:1030}, the number was at least five. 
In the present paper, we leverage the ZFC theorems of \cite{MiSh:1030} to prove, 
nearly fifty years after Keisler introduced the order, that: 

\begin{thm-m}[Theorem \ref{t:seq} below, in ZFC]
There is an infinite strictly descending sequence of simple unstable theories in Keisler's order. \emph{Thus}, Keisler's order 
is infinite and not a well order. 
\end{thm-m} 

The proof builds on a breakthrough of \cite{MiSh:1030}, the isolation of `explicit simplicity' and the corresponding construction of a family of so-called optimal ultrafilters.  
In some sense, such ultrafilters are to simple theories as Keisler's good ultrafilters \cite{keisler-1} are to all theories. 
The main constructions of \cite{MiSh:1030} allowed one of the cardinal parameters to be either $\aleph_0$ or an uncountable compact cardinal. 
However, the technology of that paper applies directly, in ZFC, to simple theories which have relatively little forking. 

This result revises both the picture of Keisler's order and the picture of simple theories. 
Simple theories, a generalization of stable theories which include the random graph and pseudofinite fields, 
are an active area of model-theoretic research as well as a fertile
interface for applications of model theory to geometry and combinatorics \cite{ChHr}, \cite{hr}, \cite{hrushovski}. 
Though simple theories retain some of the good behavior and structure theory available in stability, 
many basic questions about how simple theories may differ essentially from stable theories had remained open. 

Theorem \ref{t:seq} can be understood as showing that the asymptotic structure of simple theories is 
significantly more complex than that of stable theories (see diagram below).   
Moreover, the theories in question are low, with trivial forking.  
Building on \cite{MiSh:1030}, our methods here initiate a program of classifying the simple theories according to the complexity 
of coloring, that is, of the range $\mu$ of a coloring function $G$ as it is used in \S 5 below. 

The key model-theoretic objects in our proof are the generic $m$-free $k$-hypergraphs, \ref{d:m-k} below. The results 
from combinatorial set theory have to do with existence of free subsets in set mappings, combined with a major set-theoretic advance 
already mentioned,  
the existence theorem for so-called optimal ultrafilters recently proved by the authors in \cite{MiSh:1030} (see \S \ref{s:various} below). 

\begin{defn} \label{d:m-k}
$T_{m,k}$ is the model completion of the theory with one symmetric irreflexive $(k+1)$-ary relation $R$ and no complete graphs
on $m+1$ vertices. 
We say $(m,k)$ is \emph{nontrivial} to mean that $m > k \geq 2$, and
say that $T_{m,k}$ is nontrivial if $(m,k)$ is. We will assume $T_{m,k}$ is nontrivial unless otherwise stated.
\end{defn}

When $m>k=1$ (so the edge relation is binary), the theory is non-simple and in fact $SOP_3$, but in the case of hyperedges the situation is different. 

\begin{thm-lit} \emph{(Hrushovski \cite{h:letter})}
For $m > k \geq 2$, the theory $T_{m,k}$ is simple with trivial forking. 
\end{thm-lit}

Our argument will be guided by the following informal thesis. 

\begin{thesis} \label{m:thesis} The theories $T_{m+r, k+r}$ become in some sense less complicated $($closer to the random graph$)$  
as $r \rightarrow \infty$. 
\end{thesis}

Note that this thesis does not yet account for each coordinate growing separately.

\begin{disc} \emph{(of \ref{m:thesis})}
The ``forbidden configurations'' in the graphs $T_{m,k}$ are essentially barriers to amalgamation. In the Rado graph $\trg$, or equivalently in the
random hypergraphs $T_n$ for each $n<\omega$, there are no such barriers. 
As $r \rightarrow \infty$, the freedom to extend given configurations persists for up to $r$ steps before being impeded by an 
amalgamation constraint.
\end{disc}



%


Finally, what are the implications of our main theorem for Keisler's order? 

\begin{center}
\includegraphics[width=65mm]{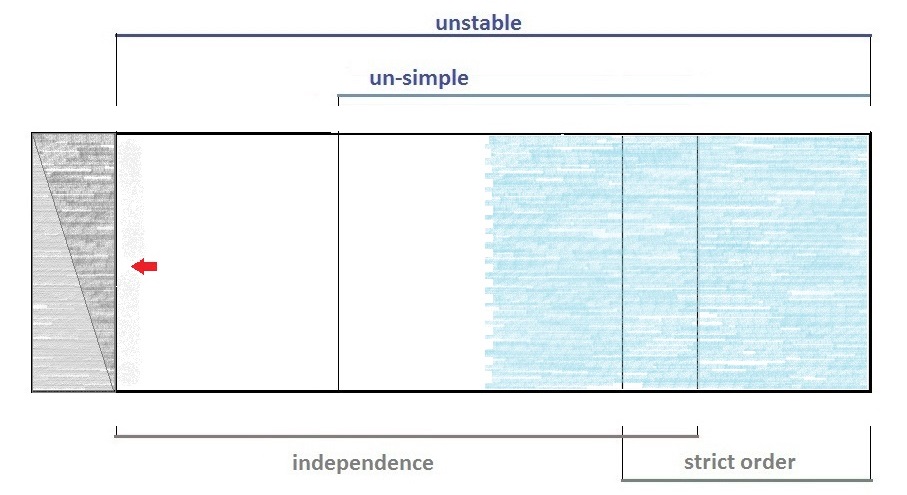} 
\end{center}

As the diagram indicates, Theorem \ref{t:seq} -- drawn here according to its full statement in Section \S \ref{s:infinite} below -- 
may be understood in parallel to the results of \cite{MiSh:998} proving that any theory with the strong tree property $SOP_2$
 is already maximal in Keisler's order.  
The unstable theories admit a natural structure/randomness dichotomy; any unstable theory either contains something akin to a bipartite random graph 
(the ``independence property'') or akin to a strict linear ordering (the ``strict order property''), and possibly both. From the point of view of Keisler's order, 
all linear orders, and indeed any theory with the much weaker property $SOP_2$, are in the maximum class, indicated by the large shaded region on the right. 
The infinite descending sequence of theories we find occurs in a comparatively tiny region of the class of independent or random theories, indicated by the arrow. 
This region had appeared, to previous instruments, to be relatively tame. This new picture supports the striking suggestion, hinted at already in \cite{mm5} and \cite{MiSh:998}, 
that Keisler's order is tied up with many such structure/randomness dichotomies within instability. Moreover, while linear order (``structure'') 
is maximally complex, 
this framework is able to illuminate significant but previously undetected jumps in the complexity of independence. 
The complexity we find here, using the lens of Keisler's order, has to do with failures of amalgamation and appears orthogonal to forking. 

We thank H. J. Keisler for comments on the manuscript.

\tableofcontents

\section{Preparation}

Our model-theoretic approach is guided by the framework of \cite{MiSh:1030} and in particular its program of 
stratifying the complexity of simple theories according to their so-called \emph{explicit simplicity}. 
As explained there, to capture the problem of realizing types in ultrapowers it is helpful to remember that 
at each index model, \lost theorem may guarantee that the `projections' of 
various finite fragments of the type are correct  
but it will not, in general, preserve their relative position. 
An informative translation of the complexity which may arise in such projections is to ask: 
given a type $p \in \ts(N)$, $||N|| = \lambda$ not forking over some small $M_*$, when can we color the finite pieces of $p$ (or more correctly, 
sufficiently closed sets containing them) with $\mu$ colors so that any time we move finitely many pieces of the same color by 
piecewise automorphisms which are the identity on $M_*$, 
agree on common intersections and introduce no new forking, the union of the images is a consistent partial type? 

The main theorems of the present paper will imply that for each finite $k \geq 2$, for $T_{k+1, k}$ it is necessary and sufficient to have $\mu$ colors 
when $\lambda = \mu^{+k}$. To see that, for instance, the tetrahedron-free three-hypergraph $T_{3,2}$ requires multiple colors,  
consider a type $\{ R(x,a,b), R(x,b,c), R(x,a,c) \}$ in the monster model, where ${\models \neg R(a,b,c)}$ but piecewise automorphisms of 
$\{ R(x,a,b) \}$, $\{ R(x,b,c) \}$, $\{ R(x,a,c) \}$ may move the parameters onto a triangle. 
 
\br 

Before making further remarks on strategy, we review a family of classical results on set mappings. 
Proofs of Theorems \ref{t:yes}, \ref{t:no}, \ref{t:maybe} may be found in Erd\"os, Hajnal, Mate, and Rado \cite{ehmr}, as noted.
We use $\lambda, \kappa, \mu$ for infinite cardinals and $k, \ell, m, n$ for integers. 

\begin{defn}
Let $m, n$ be integers, $\alpha$ an ordinal, and $\lambda, \mu$ infinite cardinals. 
\begin{enumerate}
\item We say $F: [\lambda]^m \rightarrow [\lambda]^{<\mu}$ is a \emph{set mapping} if $F(x) \cap x = \emptyset$ for $x \in [\lambda]^m$. 
\item We say the set $X \subseteq \lambda$ is \emph{free} with respect to $F$ if $F(x) \cap X = \emptyset$ for every $x \in [X]^m$.
\end{enumerate}
\end{defn}

\begin{ntn} We write:
\[ (\lambda, m, \mu) \longrightarrow n \]
to mean that for every set mapping $F : [\lambda]^m \rightarrow [\lambda]^{<\mu}$ there is a set $X$ of size $n$ which is free with respect to $F$, 
and write 
\[ (\lambda, m, \mu) \not\longrightarrow n \]
to mean that for some set mapping $F : [\lambda]^m \rightarrow [\lambda]^{<\mu}$ no set of size $n$ is free with respect to $F$. 
\end{ntn}

A celebrated theorem of Sierpi\'nski \cite{sier} states that the continuum hypothesis holds precisely when $\mathbb{R}^3$ admits a decomposition 
into three sets $A_x$, $A_y$, $A_z$ such that for $w=x,y,z$, $A_w$ intersects all lines in the direction of the $w$-axis in finitely many points. 
In other words, this property characterizes $\aleph_1$. 
Kuratowski and Sierpi\'nski then characterized all $\aleph_n$s via set mappings: 

\begin{thm-lit} \emph{(see \cite{ehmr} Theorem 46.1)} \label{t:yes}
For any $m < \omega$ and ordinal $\alpha$ we have that
\[  \left( \aleph_{\alpha + m }, m, \aleph_\alpha \right) \longrightarrow m+1 \]
\end{thm-lit}

\begin{thm-lit} \emph{(see \cite{ehmr} Theorem 45.7)} \label{t:no}
For any $m < \omega$ and ordinal $\alpha$ we have that
\[  \left( \aleph_{\alpha + m }, m + 1, \aleph_\alpha \right) \not\longrightarrow m+2 \]
\end{thm-lit}

\begin{cor} \label{d:mnt} \emph{(Monotonicity)}
Given $m_0 \leq m \leq m_1$, $n_0 \leq n \leq n_1$, 
\begin{enumerate}
\item[(a)] if $n > m$ and $(\lambda, m, \mu) \longrightarrow n$  then  $(\lambda, m_0, \mu) \longrightarrow n_0$. 
\item[(b)] if $n_1 > m_1$ and $(\lambda, m, \mu) \not\longrightarrow n$ then $(\lambda, m_1, \mu) \not\longrightarrow n_1$. 
\end{enumerate}
\end{cor}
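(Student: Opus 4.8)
The plan is to deduce both monotonicity statements from Theorems \ref{t:yes} and \ref{t:no} by direct reduction arguments on set mappings, handling the dimension parameter $m$ and the free-set size parameter $n$ separately in each part. For part (a), I would argue in two moves. First, to go from $(\lambda, m, \mu) \longrightarrow n$ to $(\lambda, m, \mu) \longrightarrow n_0$ for $n_0 \leq n$: a free set of size $n$ contains a subset of size $n_0$, and freeness with respect to $F$ is clearly inherited by subsets (if $F(x) \cap X = \emptyset$ for all $x \in [X]^m$, the same holds for any $X' \subseteq X$). Second, to lower the dimension from $m$ to $m_0 \leq m$: given a set mapping $F_0 : [\lambda]^{m_0} \rightarrow [\lambda]^{<\mu}$, I would build a set mapping $F : [\lambda]^m \rightarrow [\lambda]^{<\mu}$ whose free sets are free for $F_0$, by setting $F(y) = \bigcup \{ F_0(x) \setminus y : x \in [y]^{m_0} \}$ for $y \in [\lambda]^m$; this is a union of $\binom{m}{m_0}$ many sets each of size $<\mu$, hence of size $<\mu$ since $\mu$ is infinite, and $F(y) \cap y = \emptyset$ by construction. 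If $X$ is free for $F$ and $|X| \geq m+1$, then for any $x \in [X]^{m_0}$, pick $y \in [X]^m$ with $x \subseteq y$ (possible since $|X| > m$, which is where the hypothesis $n > m$ enters, after passing to the size-$n$ free set); then $F_0(x) \cap X \subseteq F_0(x) \setminus y \cup (y \cap X)$, and the first part is inside $F(y)$ so meets $X$ trivially while we may also arrange $F_0(x) \cap y = \emptyset$ by the set mapping condition $F_0(x) \cap x = \emptyset$ together with a slightly more careful definition of $F$ — here I would instead define $F(y) = \bigcup \{ F_0(x) : x \in [y]^{m_0} \} \setminus y$ so that $F_0(x) \cap X \subseteq (F(y) \cup y) \cap X = \emptyset \cup (y \cap X)$, and since the relevant elements of $F_0(x)$ avoiding $y$ land in $F(y)$, the only possible overlap is with $y \cap X \subseteq X$; to eliminate that, note $F_0(x) \cap x = \emptyset$, so $F_0(x)$ avoids at least the $m_0$ points of $x$, and by choosing $y$ to contain more than just $x$ one routinely closes the gap. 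Combining the two moves gives $(\lambda, m_0, \mu) \longrightarrow n_0$.

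For part (b) I would dualize. Starting from a set mapping $F : [\lambda]^m \rightarrow [\lambda]^{<\mu}$ with no free set of size $n$, I want a set mapping $F_1 : [\lambda]^{m_1} \rightarrow [\lambda]^{<\mu}$ with no free set of size $n_1$. Raise the dimension first: define $F_1(y) = \bigcup \{ F(x) : x \in [y]^m \} \setminus y$ for $y \in [\lambda]^{m_1}$, again a union of $\binom{m_1}{m}$ sets of size $<\mu$, hence $<\mu$, and disjoint from $y$. If $X$ were free for $F_1$ with $|X| = n_1$, then for every $x \in [X]^m$ and every $y \in [X]^{m_1}$ with $x \subseteq y$ we get $F(x) \cap X \subseteq (F_1(y) \cup y) \cap X$; the $F_1(y)$ part is empty by freeness, and controlling the $y \cap X$ part uses $|X| = n_1 > m_1$ (the hypothesis $n_1 > m_1$) to choose $y$ generically, showing $X$ is free for $F$ of size $n_1 \geq n$, hence contains a free subset of size $n$, contradiction. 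Then lower $n_1$ down to $n$: actually the argument above already produces a free set of size $n$ for $F$ directly, so no separate move is needed — or, phrased in the other order, if there is no free set of size $n$ there is none of size $n_1 \geq n$ automatically, and the dimension step is the only real content.

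The main obstacle is bookkeeping the interaction of the set-mapping condition $F(x) \cap x = \emptyset$ with the dimension-changing construction: one must ensure the newly defined map still satisfies $F(y) \cap y = \emptyset$ (handled by the explicit "$\setminus y$" in the definitions) and, more delicately, that a free set for the higher-dimensional map genuinely restricts to a free set for the lower-dimensional one, which requires that every $m_0$-subset (resp. $m$-subset) of the candidate free set sits inside some larger subset on which freeness has been assumed — this is exactly why the numerical side conditions $n > m$ in (a) and $n_1 > m_1$ in (b) are imposed, and getting these inequalities to line up with the chain $m_0 \leq m \leq m_1$, $n_0 \leq n \leq n_1$ is the one place where care is needed. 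Everything else is a routine cardinal-arithmetic observation ($\mu$ infinite, finitely many sets of size $<\mu$ union to size $<\mu$) plus the trivial downward closure of freeness under subsets.
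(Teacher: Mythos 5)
Your overall plan and your dimension-changing construction are exactly what the paper uses: given $F_0 : [\lambda]^{m_0} \to [\lambda]^{<\mu}$, form $F(y) = \bigcup\{F_0(x) : x \in [y]^{m_0}\} \setminus y$ on $[\lambda]^m$, get a free set for $F$, and try to transfer freeness back to $F_0$; downward closure of freeness then handles $n \mapsto n_0$, and part (b) is the contrapositive of part (a) applied to $m_1, n_1, m, n$, which is literally how the paper disposes of it. So the route is the same.

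But there is a genuine gap at the one non-trivial step, and it is exactly where you write ``one routinely closes the gap'' and ``choose $y$ generically.'' If you pick a \emph{single} $y \in [X]^m$ with $x \subseteq y \subseteq X$, the decomposition $F_0(x) \subseteq F(y) \cup y$ together with $F(y) \cap X = \emptyset$ only buys you $F_0(x) \cap X \subseteq y$. That residue is not empty in general: you only know $F_0(x)$ avoids $x$, and $y \setminus x$ has $m - m_0 > 0$ elements of $X$ which $F_0(x)$ is perfectly free to hit. (Take $m_0 = 1$, $m = 2$, $F_0(\{a\}) = \{b\}$ with $b \in X$; any $y = \{a, b\}$ fails to certify freeness.) The quantifier alternation matters: you need, for each $\alpha \in X \setminus x$, a witness $y$ chosen \emph{after} $\alpha$, not one $y$ serving all $\alpha$ at once. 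That is the paper's move: fix $\alpha \in X \setminus x$, choose $w \subseteq X \setminus (x \cup \{\alpha\})$ of size $m - m_0$, set $y = x \cup w$; then $\alpha \notin y$ and $\alpha \notin F(y)$ by freeness, and since $F_0(x) \subseteq F(y) \cup y$ we get $\alpha \notin F_0(x)$. The cardinality check is $|X \setminus (x \cup \{\alpha\})| = n - m_0 - 1 \geq m - m_0$, i.e.\ $n > m$. This is where the strict inequality is actually used; your remark that ``$|X| > m$ is where the hypothesis $n > m$ enters'' is misplaced, because merely choosing some $y \in [X]^m$ containing $x$ only needs $n \geq m$ — the extra $+1$ of room is precisely what lets you also dodge $\alpha$. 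Without spelling out this $\alpha$-dependent choice of $y$, the argument does not go through; it is not routine bookkeeping but the heart of the reduction.
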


\begin{proof}
(a) Suppose we are given a set mapping $F: [\lambda]^{m_0} \rightarrow [\lambda]^{<\mu}$. Define $F^\prime : [\lambda]^m \rightarrow [\lambda]^{<\mu}$ by 
$F^\prime(u) = \bigcup \{ F(u^\prime) : u^\prime \in [u]^{m_0} \} \setminus u$.  Clearly $F^\prime$ is a set mapping. 
Since $(\lambda, m, \mu) \rightarrow n$, there is a free set $v_*$ for $F^\prime$, $|v_*| = n$. Fix $v_{**} \subseteq u$ with $|v_{**}| = n_0 \leq n$. 
Let us check that $v_{**}$ is as required for $F$. Let $u \in [v_{**}]^{m_0}$ and $\alpha \in v_{**}\setminus u$. Since $F$ is a set mapping, it suffices to prove that 
$\alpha \notin F(u)$. Towards this, choose $w \subseteq v_* \setminus (u \cup \{ \alpha \})$ of size $m - m_0$, which is possible because 
$| v_* \setminus (u \cup \{ \alpha \} ) | = n - (m_0 + 1) = (n -1) - m_0 \geq m - m_0$. So $u \cup w \subseteq v_*$ and $|u \cup w| = m_0 + (m-m_0) = m$ 
elements and $\alpha \in v_* \setminus (u \cup w)$. As $v_*$ was chosen to be a free set for $F^\prime$, necessarily $\alpha \notin F^\prime(u \cup w)$.
Now recalling the definition of $F^\prime$, $F^\prime(u \cup w) = \bigcup \{ F(x) : x \in [u \cup w]^{m_0} \} \setminus ( u \cup w )$. Since 
$\alpha \notin (u \cup w)$ and we know that $\alpha \notin F^\prime(u \cup w)$, we conclude $\alpha \notin F(u)$ as desired. 

(b) This holds by the contrapositive of (a), i.e. (a) applied to $m_1, n_1, m, n$ instead of $m, n, m_0, n_0$. 
\end{proof}

The general situation for free sets of large finite size relative to $m$ 
is less clear. For instance, it is known that: 

\begin{thm-lit} \emph{(see \cite{ehmr} Theorem 46.2)} \label{t:maybe}
For any $n < \omega$ and ordinal $\alpha$ we have that
\begin{itemize}
\item \emph{(Hajnal-Mate)} $(\aleph_{\alpha + 2}, 2, \aleph_\alpha) \longrightarrow n$

\item \emph{(Hajnal)}  $(\aleph_{\alpha + 3}, 3, \aleph_\alpha) \longrightarrow n$

\end{itemize}
\end{thm-lit}

However, we note there are also consistency results.\footnote{We carry out the present proof entirely in ZFC. It will be very interesting to see whether future work will show 
such independence results to also be reflected in the model-theoretic structure of simple theories, 
or whether the connection goes no further than what we develop here.} In the following theorem, 
$\tau(n+1)$ is the least natural number such that $\tau(n+1) \rightarrow (\tau(n), 7)^5$. (Further results are in a forthcoming paper \cite{Sh:F1406}.)

\begin{thm-lit} \emph{(Komj\'ath and Shelah 2000 \cite{KoSh:645}, Theorem 1)} \label{mst}
There is a function $\tau: \omega \rightarrow \omega$ such that whenever $\mu$ is regular, $n<\omega$, $\lambda = \mu^{+n}$, $\mu = \mu^{<\mu}$, and 
$\bigwedge_{\ell < n} 2^{\mu^{+\ell}} = \mu^{+\ell+1}$, 
for some $(<\mu)$-complete $\mu^{+(n+1)}$-c.c. forcing notion $\mcp$ of cardinality $\lambda$ collapsing no cardinals, in $\vv^{\mcp}$ 
we have $2^\mu = \mu^{+n}$ and  
there is a set mapping $F : [\lambda]^4 \rightarrow [\lambda]^{<\mu}$ with no free subset of size $\tau(n)$. 

In symbols, under these assumptions,
\[ \left( \mu^{+n}, 4, \mu \right) \not \longrightarrow \tau(n) \]
\end{thm-lit}

\br

We now briefly motivate our use of these theorems for realizing and omitting types. A first adjustment is that we would 
like to enclose fragments of types in suitable larger parameter sets, so we will want to replace the condition 
$x \cap F(x) = \emptyset$ in the definition of set mapping with the condition that $x \subseteq F(x)$ as in \ref{d:1.4}(1) and also to 
replace ``not free'' with \ref{d:1.4}(2).

\begin{defn} \label{d:1.4}
Let $k < n$ be integers, $\alpha$ an ordinal, and $\lambda, \theta$ infinite cardinals. 
\begin{enumerate}
\item We say $F: [\lambda]^k \rightarrow [\lambda]^{<\theta}$ is a \emph{strong set mapping} if $x \subseteq F(x)$ for $x \in [\lambda]^k$. 
\item We say the set $X \in [\lambda]^n$ is \emph{covered} with respect to $F$ if there exists $x \in [X]^k$ such that $X \subseteq F(x)$. 
\end{enumerate}
\end{defn}

Briefly, working in one of the theories $T_{k+1, k}$ ($k \geq 2$), we will want to 
associate to each finite subtype a larger `enveloping' set, and to color these envelopes 
in such a way that within any fixed color class, any time 
a near-forbidden configuration (e.g. an $R$-triangle on the parameters) appears it must 
already be contained in one of the associated envelopes. In the course of our analysis, 
we will be able to ensure the individual envelopes correspond to consistent partial types over submodels. 
As will be explained, this property of absorbing forbidden configurations will then give sufficient 
leverage for a proof (by contradiction) of amalgamation within each color class. 
The right formalization for our present arguments is the following. We will use the case $\theta = \aleph_0$.  

\begin{defn} \label{x2} 
Let $\pr_{n,k}(\lambda, \mu, \theta) = \pr^0_{n,k}(\lambda, \mu, \theta)$ be the assertion that 
we can find $G: [\lambda]^{<\theta} \rightarrow \mu$ such that: 
\begin{quotation}
if $w \in [\lambda]^n$, 
$\overline{u} = \langle u_v : v \in [w]^{k} \rangle$, 
$v \in [w]^{k}$ implies $v \subseteq u_v \in [\lambda]^{<\theta}$ 
and $G \rstr \{ u_v : v \in [w]^{k} \}$ is constant, 
then for some $v \in [w]^{k}$ we have $w \subseteq u_v$. 
\end{quotation}
\end{defn}

The full proof of saturation involves a detailed analysis of model-theoretic amalgamation problems arising in ultrapowers with 
Definition \ref{x2} as a key ingredient. 

A crucial point of this definition appears in the existence proof, Lemma \ref{z4} below: for the hypergraphs 
in question we may always take $\mu < \lambda$, and in fact, the subscript $k$ is tied to the 
cardinal distance of $\lambda$ and $\mu$.  
The $(\lambda,\mu)$-\emph{optimal} (or: perfect) ultrafilters of \cite{MiSh:1030} play an important role, as will be
explained in due course. 

If failures of freeness, which is to say of covering, help with saturation, when will existence of free sets yield omitted types? 
A priori, given a model $N = (\lambda, R) \models T_{n,k}$, 
we cannot directly apply Theorem \ref{t:yes} to omit a type, 
because that theorem does not guarantee that the free set will occur on an $R$-complete graph.  The right 
analogue for non-saturation will be the following. 

\begin{lemma}[proved in \S \ref{s:properties}] \label{f2ax} 
Suppose that $n > k \geq 2$ and $(\lambda, k, \mu) \rightarrow n$. 
Then there is a model $M$ of $T_{n,k}$ of size $\geq \lambda$, 
and $\lambda$ elements of its domain 
$\langle b_\alpha : \alpha < \lambda \rangle$, such that 
writing 
\[ \bad = \{ w \in [\lambda]^n : (\forall v \in [w]^{k})(M \models R(\overline{b}_v) ) \} \] 
we have that for any strong set mapping  $ F: [\lambda]^{k} \rightarrow [\lambda]^{<\mu} $,  
for some $w \in \bad$ 
\[ (\forall v \in [w]^{k})(w \not\subseteq F(v)).  \]
\end{lemma}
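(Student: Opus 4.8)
The plan is to manufacture the model $M$ and the sequence $\langle b_\alpha : \alpha < \lambda \rangle$ from a generic $(n-1)$-free $n$-hypergraph structure, or more precisely by putting an appropriate $R$-structure on $\lambda \times (\text{something})$, so that the set $\bad$ of ``$R$-complete'' $n$-subsets of $\lambda$ is as rich as possible — ideally, so that the relevant combinatorics of $\bad$ on $\lambda$ looks like the full $[\lambda]^n$. The cleanest way I would try first: let $M$ be a generic model of $T_{n,k}$ and find inside it $\lambda$ elements $\langle b_\alpha : \alpha < \lambda \rangle$ such that for \emph{every} $w \in [\lambda]^n$ and every $v \in [w]^k$ we have $M \models R(\overline b_v)$ except possibly we must avoid the full $(n{+}1)$-clique — but since $w$ has only $n$ elements and the forbidden configuration is an $R$-complete graph on $m+1 = n+1$ vertices (here $m = n$, $k' = k$? wait, the theory is $T_{n,k}$, forbidding complete graphs on $n+1$ vertices with $(k{+}1)$-ary $R$), a set of exactly $n$ vertices never contains a forbidden clique. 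Hence by genericity/model-completion we can realize $\bad = [\lambda]^n$ outright: take $M$ to contain an $R$-structure on a set including $\{ b_\alpha : \alpha < \lambda\}$ in which \emph{all} $(k{+}1)$-tuples from the $b$'s satisfy $R$ (this is consistent since it contains no $(n{+}1)$-clique... one must check this — an $(n{+}1)$-clique would need $n+1$ distinct $b$'s all of whose $(k{+}1)$-subsets are $R$-related, which is exactly what we are positing, so in fact this naive approach forbids $\lambda \geq n+1$). So I must be more careful: I would instead only ask that ``enough'' $n$-subsets be $R$-complete, using the hypothesis $(\lambda, k, \mu) \to n$ to locate them.

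Concretely, here is the real argument. Apply the partition hypothesis $(\lambda, k, \mu) \to n$ as follows. Build $M \models T_{n,k}$ and $\langle b_\alpha : \alpha<\lambda\rangle$ by a generic construction in which the $R$-relation among the $b$'s is governed by a generic (model-complete) choice, ensuring that $\bad$ — the $n$-subsets all of whose $k$-subsets are ``$R$-complete'' in the induced sense — contains, for a suitable club or stationary-like combinatorial reason, a copy of $[\lambda]^n$ restricted to a large free set. The point of Lemma \ref{f2ax} is exactly the contrapositive packaging: given a strong set mapping $F : [\lambda]^k \to [\lambda]^{<\mu}$, define an ordinary set mapping $F' : [\lambda]^k \to [\lambda]^{<\mu}$ by $F'(v) = F(v) \setminus v$ (legal since $|F(v)| < \mu$ infinite, or if $\mu$ finite adjust); by $(\lambda,k,\mu)\to n$ there is a free set $X \in [\lambda]^n$, i.e. $F'(x) \cap X = \emptyset$ for all $x \in [X]^k$, equivalently $X \not\subseteq F(x)$ for all $x \in [X]^k$ (here one uses $|X| = n > k \geq |x|$ so $X \neq x$, hence $X \not\subseteq F(x)$ follows from $F(x) \cap X \subseteq x \subsetneq X$). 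Thus the \emph{only} thing the model must provide is: the particular free set $X$ produced lies in $\bad$. Since $F$ is arbitrary, and the free set depends on $F$, I need $\bad$ to \emph{contain every} potential free set — but a free set is just some $n$-element subset, so again I am driven to want $\bad = [\lambda]^n$, which we saw is impossible for $\lambda \geq n+1$. The resolution: we do not need every $n$-set, we need that for each $F$ the free set can be \emph{chosen} inside $\bad$; so I would prove a relativized partition statement ``$(\bad, k, \mu) \to n$'' meaning every set mapping on $[\lambda]^k$ has a free set \emph{belonging to $\bad$}, and derive this from $(\lambda,k,\mu)\to n$ together with a density/genericity property of $\bad$.

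The key steps, in order: (1) fix the generic $M \models T_{n,k}$ and choose $\langle b_\alpha : \alpha<\lambda\rangle$ indiscernible-like (or just generic) over the empty set, so that which $n$-subsets land in $\bad$ is controlled by a symmetric ``coloring'' $c : [\lambda]^{k} \to 2$ recording whether $M \models R(\overline b_v)$, with the sole constraint that $c$ has no ``$R$-clique on $n+1$ vertices'', i.e. no $(n{+}1)$-set all of whose $(k{+}1)$-subsets are monochromatically $R$; (2) observe we are free to choose $c$ — e.g. pick a generic/random such $c$, or better, build $M$ so that $c$ is \emph{itself} chosen after seeing $F$: since the lemma only asserts existence of $M$ for which the conclusion holds for \emph{all} $F$, and $M$ is existentially quantified \emph{before} $F$, step (2) must instead use a single $c$ that works uniformly; (3) therefore prove: for a generic $c$ (equivalently, for $M$ generic), $\bad$ is ``$[\lambda]^n$-dense'' in the sense that every member of $[\lambda]^{<\mu}$-indexed family... — more precisely, every free set guaranteed by Theorem \ref{t:yes}-type arguments can be thinned into $\bad$; (4) combine with the reduction of the previous paragraph. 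I expect \textbf{step (3) — arranging that the generic $R$-structure on the $b$'s makes $\bad$ simultaneously large enough to capture an $F$-free set for every strong set mapping $F$, while still omitting the $(n{+}1)$-clique} — to be the main obstacle, and the natural tool is to interleave the construction of $M$ with a bookkeeping over all $F$ using the model-completeness of $T_{n,k}$, or alternatively to quote the generic hypergraph's extension property to absorb the finitely-much-at-a-time obstructions; the partition hypothesis $(\lambda,k,\mu)\to n$ is what guarantees the bookkeeping never gets stuck.
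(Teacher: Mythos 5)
The high-level reduction in your second paragraph is sound (it is Claim \ref{c10} in the paper): given a free set $X$ for the ordinary set mapping $F' = F\setminus\mathrm{id}$, from $|x|=k<n=|X|$ one gets $X \not\subseteq F(x)$ for all $x\in[X]^k$. You also correctly observe that you cannot take $\bad=[\lambda]^n$ once $\lambda\geq n+1$ (two overlapping $R$-complete $n$-sets would immediately force a forbidden $(n{+}1)$-clique), so the entire weight falls on arranging, with a \emph{single} model chosen in advance, that every $F$ has some witness \emph{inside} $\bad$. You label this step (3) and then leave it unresolved, proposing either bookkeeping over all $F$ or an unspecified ``density/genericity'' property. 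That is where the argument has a genuine gap: there are far more strong set mappings $F:[\lambda]^k\to[\lambda]^{<\mu}$ than can be enumerated in $\lambda$ steps, so a bookkeeping/diagonalization will not close, and ``generic'' is doing all the work in your step (3) without a definition that lets one verify the conclusion.

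The paper closes exactly this gap with an explicit, elementary construction rather than genericity. It defines $N=(\lambda,R^N)$ by declaring $(\alpha_0,\dots,\alpha_k)\in R^N$ precisely when the coordinates lie in pairwise distinct residue classes modulo $n$ and are separated by multiples of $n$ (pairwise distinct ``blocks''). Pigeonhole on residues gives that no $(n{+}1)$-set is $R^N$-complete, so $N$ embeds into a model of $T_{n,k}$; and an $n$-set is in $\bad$ exactly when it is a transversal of the residue classes with pairwise distinct blocks. Then, given $F$, one does \emph{not} try to argue abstractly that $\bad$ is big enough: instead one pulls $F$ back to a set mapping $F_1$ on the block coordinates, applies the hypothesis $(\lambda,k,\mu)\to n$ (via Claim \ref{c10}) to get a free set $\{\alpha^*_0<\cdots<\alpha^*_{n-1}\}$ for $F_1$, and sets $\beta_i = n\alpha^*_i + i$. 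By construction $w_2=\{\beta_i:i<n\}\in\bad$, and freeness of the $\alpha^*_i$ for $F_1$ translates into $w_2\not\subseteq F(v)$ for every $v\in[w_2]^k$. So the ``relativized partition statement $(\bad,k,\mu)\to n$'' you were hoping to extract by genericity is in fact obtained by a deterministic coding $\alpha\mapsto(\mathrm{block},\mathrm{residue})$ that makes $\bad$ a combinatorial copy of $[\lambda]^n$. Your proposal does not supply anything playing the role of this coding, and without it the argument does not go through.
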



\noindent 

For orientation, the reader may now wish to read the statement of Theorem \ref{t:p2a}, as well as of the Main Theorem 
\ref{t:seq}. Keisler's order is defined in \S \ref{s:infinite}. Further background on Keisler's order and 
saturation of ultrapowers appears in \cite{MiSh:E74} and in the introduction to \cite{MiSh:1030}.  
Earlier sources are \cite{keisler}, \cite{kochen}. 

\br

We now turn to the proofs. 

\section{Key covering properties} \label{s:properties}

In this section, we give the the existence proof 
corresponding to Definition \ref{x2} above, Lemma \ref{z4}. We also prove 
Lemma \ref{f2ax} from p. \pageref{f2ax} above. ``$\pr$'' abbreviates ``property.'' 
Locally in this section, we will refer to the property from \ref{x2} as 
$\pr^0_{n,k}(\lambda, \mu, \theta)$ to distinguish it from 
the weaker variant $\prp_{n,k}(\lambda, \mu, \theta)$ defined below.  
We will establish results for both $\pr^0$ and $\pr^1$ in this section, 
although only $\pr^0$ is central for our proofs. 
(In all later sections in the paper, $\pr$ means $\pr^0$, 
as will be stated in Convention \ref{c-superscript}.)

\begin{defn} \label{x2a} 
Let $\prp_{n,k}(\lambda, \mu, \theta)$ be the statement that:

if $N = (\lambda, R) \models T_{n,k}$ then we can find $G: [\lambda]^{<\theta} \rightarrow \mu$ such that: 
\begin{quotation}
if $w \in [\lambda]^n$, $(w, R\rstr w)$ is a complete hypergraph, 
\\ $\overline{u} = \langle u_v : v \in [w]^{k} \rangle$, 
$v \in [w]^{k} \implies v \subseteq u_v \in [\lambda]^{<\theta}$ 
\\ and $G \rstr \{ u_v : v \in [w]^{k} \}$ is constant, 
\\ then for some $v \in [w]^{k}$ we have $w \subseteq u_v$. 
\end{quotation}
\end{defn}

\begin{obs} \label{z3a} \emph{ }
\begin{enumerate}
\item If $\theta_1 \geq \theta_2$, $\lambda_1 \geq \lambda_2 \geq \mu_2 \geq \mu_1$ and $\ii \in \{ \zero, \one \}$ then 
\[ \pri_{n,k}(\lambda_1, \mu_1, \theta_1) \implies \pri_{n,k}(\lambda_2, \mu_2, \theta_2). \] 
\item $\pr^0_{n,k}(\lambda, \mu, \theta) \implies \pr^1_{n,k}(\lambda, \mu, \theta)$.  
\end{enumerate}
\end{obs}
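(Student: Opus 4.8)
The two items in Observation \ref{z3a} are both monotonicity facts about the properties $\pr^0$ and $\pr^1$, so the plan is to verify them directly from the definitions by manipulating the witnessing colorings $G$.

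\textbf{Part (1): Monotonicity in the parameters.} Suppose $\theta_1 \geq \theta_2$, $\lambda_1 \geq \lambda_2 \geq \mu_2 \geq \mu_1$, and fix $\ii \in \{0,1\}$; assume $\pri_{n,k}(\lambda_1, \mu_1, \theta_1)$. I would fix a witness $G_1 : [\lambda_1]^{<\theta_1} \to \mu_1$ (in the $\ii = 1$ case, a witness $G_1$ for each model $N_1 = (\lambda_1, R_1) \models T_{n,k}$). Since $\lambda_2 \leq \lambda_1$, identify $\lambda_2$ with an initial segment of $\lambda_1$; since $\theta_2 \leq \theta_1$ we have $[\lambda_2]^{<\theta_2} \subseteq [\lambda_1]^{<\theta_1}$, so we may restrict $G_1$ to $[\lambda_2]^{<\theta_2}$; and since $\mu_1 \leq \mu_2$, the restricted map has range inside $\mu_2$. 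Call it $G_2$. Now I would check the defining clause: given $w \in [\lambda_2]^n$ and $\overline{u} = \langle u_v : v \in [w]^k \rangle$ with $v \subseteq u_v \in [\lambda_2]^{<\theta_2}$ (and, for $\ii = 1$, with $(w, R\rstr w)$ complete — here one must start from a witness $G_1$ chosen for the model $N_1$ extending $N_2$, or simply use that $T_{n,k}$ has a model completion so $N_2$ embeds into some $N_1$ of size $\lambda_1$) and $G_2 \rstr \{u_v\}$ constant, observe that these same data satisfy the hypotheses of the clause for $G_1$ at $\lambda_1, \mu_1, \theta_1$, because $[\lambda_2]^{<\theta_2} \subseteq [\lambda_1]^{<\theta_1}$ and $G_1$ agrees with $G_2$ there; hence the conclusion $w \subseteq u_v$ for some $v \in [w]^k$ holds. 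The only genuine subtlety is the $\ii = 1$ case, where the witness depends on the model: one handles it by invoking the model completion (or amalgamation) to embed any $N_2 = (\lambda_2, R_2) \models T_{n,k}$ into some $N_1 = (\lambda_1, R_1) \models T_{n,k}$ with the same relation on $\lambda_2$, applying the $\lambda_1$-witness, and noting that a complete hypergraph on $w$ in $N_2$ is still complete in $N_1$.

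\textbf{Part (2): $\pr^0 \implies \pr^1$.} This is the easiest direction: the $\pr^1$ statement is obtained from the $\pr^0$ statement by \emph{adding} the hypothesis that $(w, R\rstr w)$ is a complete hypergraph, i.e.\ restricting the quantifier over $w$ to a subclass of $[\lambda]^n$. So given $N = (\lambda, R) \models T_{n,k}$, I would simply take the same $G$ witnessing $\pr^0_{n,k}(\lambda,\mu,\theta)$ (which does not depend on $N$ at all) as the witness for $\pr^1_{n,k}(\lambda,\mu,\theta)$; any configuration $(w, \overline{u})$ satisfying the $\pr^1$ hypotheses a fortiori satisfies the $\pr^0$ hypotheses, so the $\pr^0$ conclusion applies verbatim.

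I do not expect any real obstacle here — both parts are bookkeeping with the definitions. The one point requiring a sentence of care is, as noted, the model-dependence of the witness in part (1) for $\ii = 1$: one must make sure the restriction of a witness for $\lambda_1$ genuinely serves for an arbitrary model of size $\lambda_2$, which is handled by the embedding/model-completion remark above (and is trivial when $\ii = 0$, since the witness is uniform).
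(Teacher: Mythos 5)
Your proof is correct and takes essentially the same approach as the paper's own (very terse) argument: restrict the witnessing coloring $G_1$ from $[\lambda_1]^{<\theta_1}$ to $[\lambda_2]^{<\theta_2}$ and note the hypotheses of the $\pr$-clause persist, while $\pr^0 \implies \pr^1$ because $\pr^1$ only narrows the class of $w$'s. The one place where you are a little more careful than the paper — observing that for $\ii=1$ one must, given an arbitrary $N_2 \models T_{n,k}$ on $\lambda_2$, extend it to an $N_1 \models T_{n,k}$ on $\lambda_1$ with the same relation on $\lambda_2$ before invoking the $\lambda_1$-witness — is a genuine and correct clarification of the paper's phrasing, which inverts this by starting from $M_1$ and taking a restriction.
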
 

\begin{proof}
(1)  If $\ii = 1$, let $M_1 = (\lambda_1, R)$ and let $M_2 = M_1 \rstr \lambda_2 = (\lambda_2, R)$. 
If $G_1 : [\lambda_1]^{<\theta_1} \rightarrow \mu_1$ is suitable for $\theta_1$ (and the model $M_1$ if $\ii = 1$), 
then $G_2 = G_1 \rstr [\lambda_2]^{<\theta_2}$ is suitable for $\theta_2$ (and $M_2$ if $\ii = 1$) and has range $\subseteq \mu_1 \subseteq \mu_2$. 
(2) is immediate. 
\end{proof}

Although we will not use it, note that $\pr^0_{n,k}(\lambda, \mu, \theta)$ is preserved by forcing not collapsing $\lambda$, $\mu$, or $\theta$. 
The next Claim \ref{z3} reduces to the case where $\theta = \aleph_0$, and in fact, isolates a finite bound.

\begin{claim} \label{z3}
If $k \geq 2$, $n = k+1$, $\mu = \mu^{<\theta} = \cf(\mu)$, $\lambda = \mu^{+k}$, and $\ii \in \{ \zero, \one \}$,
then 
\[ \pri_{n,k} (\lambda, \mu, \theta) ~\mbox{  iff   }~\pri_{n,k}(\lambda, \mu, \aleph_0) ~\mbox{   iff   }~ \pri_{n,k}(\lambda, \mu,~ k{\binom{n}{k}}^2+1) \]
\end{claim}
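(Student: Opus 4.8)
The plan is to prove a chain of implications, three of which are immediate from monotonicity (Observation \ref{z3a}(1)): since $k\binom{n}{k}^2+1 \leq \aleph_0 \leq \theta$ (as $\theta$ is infinite — here we use $\mu = \mu^{<\theta}$, which forces $\theta \geq \aleph_0$; and if $\theta$ happens to be exactly $\aleph_0$ the middle and right statements coincide), each of $\pri_{n,k}(\lambda,\mu,\theta)$, $\pri_{n,k}(\lambda,\mu,\aleph_0)$, $\pri_{n,k}(\lambda,\mu,k\binom{n}{k}^2+1)$ implies the next one in that list by restricting the coloring $G$ to smaller finite subsets. So the entire content is the single reverse implication
\[ \pri_{n,k}\bigl(\lambda,\mu,\,k{\textstyle\binom{n}{k}}^2+1\bigr) \implies \pri_{n,k}(\lambda,\mu,\theta). \]
That is: a coloring that works for enveloping sets of bounded finite size $< k\binom{n}{k}^2+1$ can be bootstrapped to a coloring that works for enveloping sets of any size $< \theta$.

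\textbf{Key steps.} First I would fix a $G_0 : [\lambda]^{<k\binom{n}{k}^2+1} \to \mu$ witnessing the bounded-size property (in the $\ii=1$ case, for a given $N=(\lambda,R)\models T_{n,k}$). Given an arbitrary candidate configuration for the $\theta$-version — a $w\in[\lambda]^n$ (a complete subhypergraph, if $\ii=1$) together with $\overline{u}=\langle u_v : v\in[w]^k\rangle$ where each $v\subseteq u_v\in[\lambda]^{<\theta}$ — the obstruction is that the $u_v$ may be large. The idea is to extract, from each $u_v$, a \emph{small} subset $u_v' \subseteq u_v$ with $v \subseteq u_v'$, of size at most $\binom{n}{k}^2$ or so, in such a way that (i) $w\subseteq u_v \iff w\subseteq u_v'$ is \emph{detectable} from the small data, i.e. if $w\not\subseteq u_v$ we can arrange $w\not\subseteq u_v'$ by throwing one witnessing point of $w\setminus u_v$ into $u_v'$; and (ii) the coloring $G$ we build for the $\theta$-version can be read off from $G_0$ applied to these small pieces. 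The natural construction of $G$: for $u\in[\lambda]^{<\theta}$, let $G(u)$ record the $\mu$-valued function $v \mapsto G_0(\text{a canonical small subset of } u \text{ determined by } v)$ — but since there are boundedly many relevant $v$'s (at most $\binom{n}{k}$ of them for any fixed $w$) and we need the output in $\mu$ not $\mu^{\binom{n}{k}}$, we use $\mu = \mu^{<\theta} = \mu^{\binom{n}{k}}$ to re-encode a bounded tuple from $\mu$ as a single element of $\mu$. Then constancy of $G$ on $\{u_v : v\in[w]^k\}$ forces constancy of $G_0$ on the corresponding family of small sets $\{u_v' : v\in[w]^k\}$ — which has at most $\binom{n}{k}$ members, hence total size $< k\binom{n}{k}^2+1$ so $G_0$ applies — and the bounded-size property then yields some $v$ with $w\subseteq u_v'\subseteq u_v$, as required.

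\textbf{The main obstacle.} The delicate point is the bookkeeping in the middle step: choosing the small subsets $u_v'$ \emph{uniformly} — depending only on $u_v$ and $v$, not on $w$ — so that the failure pattern ``$w\not\subseteq u_v$ for all $v$'' is faithfully reflected in ``$w\not\subseteq u_v'$ for all $v$'', while keeping $|u_v'|$ below the stated threshold and (in the $\ii=1$ case) keeping enough of the hypergraph structure that $G_0$'s hypothesis ``$(w,R\rstr w)$ complete'' can still be invoked — one checks this is automatic since $w$ itself is unchanged. The size count $k\binom{n}{k}^2+1$ presumably arises as: each small piece carries $v$ (size $k$) plus one potential ``witness'' point for each of the $\binom{n}{k}$ subsets $v'\in[w]^k$ for which a conflict must be recorded, and the union of the $\binom{n}{k}$ such pieces has size $\leq \binom{n}{k}\cdot(k \cdot \binom{n}{k}) = k\binom{n}{k}^2$; I would need to pin down the exact encoding to match the constant, but the structure of the argument — a pigeonhole/encoding reduction using $\mu^{<\theta}=\mu$ — is robust and does not depend on getting the optimal constant.
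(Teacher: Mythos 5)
Your overall structure is correct and matches the paper: monotonicity handles the forward implications (Observation \ref{z3a}(1)), and the whole content is the one reverse implication, whose engine is the cardinal arithmetic $\mu^{<\theta}=\mu$. You also correctly see that the new coloring must re-encode a $\mu$-valued object of size $<\theta$ into a single element of $\mu$, and that at verification time you want to contract each $u_v$ to a finite slice $u'_v\supseteq v$ on which $G_0$ is constant.

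There is, however, a genuine gap, and it sits exactly where you flag ``the main obstacle.'' You insist the small slices $u'_v$ should be chosen ``uniformly --- depending only on $u_v$ and $v$, not on $w$,'' and that $G(u)$ should record only the one-parameter family $v\mapsto G_0(\text{some core subset of $u$ chosen from $v$})$. This will not close: for distinct $v,v'\in[w]^k$, the position of $v$ inside $u_v$ and the position of $v'$ inside $u_{v'}$ bear no relation, so constancy of $G$ on $\{u_v\}$ only compares the slice at position $v$ inside $u_v$ with the slice at the \emph{corresponding} position inside $u_{v'}$ --- which is not the slice you need (you need $v'\subseteq u'_{v'}$, but the slice at the ``same'' index position inside $u_{v'}$ need not contain $v'$). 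So constancy of $G_0$ on $\{u'_v : v\in[w]^k\}$ does not follow from a one-slice-per-$v$ encoding. The paper's device is strictly stronger: $G_2(u)$ records the full isomorphism type of $(u,{<},G_1\!\rstr[u]^{<\aleph_0})$ via the equivalence relation $E_*$ (same order type; order-preserving map commutes with $G_1$ on all finite subsets), with $\leq\mu^{<\theta}=\mu$ classes. Then the slices $u'_v$ are \emph{not} chosen from $(u_v,v)$ alone: one pulls each $v$ back to its preimage $h(v)$ in the common order type $\zeta$, takes the single set $u_*=\bigcup\{h(v):v\in[w]^k\}\in[\zeta]^{\leq k\binom{n}{k}}$ (which manifestly depends on the whole configuration), and pushes it forward by each $f_v$ to get $u'_v$. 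Because the $u_v$'s are $E_*$-equivalent, the resulting $u'_v$'s are not merely order-isomorphic but $G_1$-isomorphic, so $G_1$ is constant on them and the finite version of the property applies. The moral is that the coloring must be rich enough for the coordinated choice to be made \emph{after} seeing all the $u_v$'s together; trying to make the choice beforehand, uniformly in $(u_v,v)$, is the wrong invariance to aim for.
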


\begin{proof}  There are three clauses. The first implies the second implies the third by Observation \ref{z3a}(1).
We will prove the main case we use, second implies first, but the proof will show that in fact the third implies the second (and thus the first). 

First, fix $M$: if $\ii = \one$, let $M$ be the given model on $\lambda$, and if $\ii = \zero$, let $M$ be the complete hypergraph on $\lambda$.
Note that $\pr^{\zero}$ implies $\pr^{\one}$ by \ref{z3a}(2).  
We will show that $\pri_{n,k}(\lambda,\mu,\aleph_0)$ for $M$ implies $\pri_{n,k}(\lambda,\mu,\theta)$ for $M$. 

We start with $G_1 : [\lambda]^{<\aleph_0} \rightarrow \mu$ witnessing $\pri_{n,k}(\lambda, \mu, \aleph_0)$, and use it to build a function $G_2 : [\lambda]^{<\theta} \rightarrow \mu$ 
as follows. The notation ``$\OP_{y,x}$'' is well defined when $x,y \subseteq \operatorname{Ord}$ have the same order type and denotes the one-to-one and onto order preserving map from $x$ to $y$.  Define a two-place relation $E_*$ on $[\lambda]^{<\theta}$ by: 
\begin{align*}
u_1 E_* u_2 ~~ \iff ~~ & \otp(u_1) = \otp(u_2)  \mbox{ and } \\
                       & \OP_{u_2, u_1}: u_1 \rightarrow u_2 \mbox{ commutes with $G_1$, i.e. }\\
                       & \mbox{if $v \in [u_1]^{<\aleph_0}$ then $G_1(v) = G_1(\{ \OP_{u_2, u_1}(\alpha) : \alpha \in v \} )$} 
\end{align*}
Clearly $E_*$ is an equivalence relation with $\leq \mu^{<\theta} = \mu$ equivalence classes. 
Let $G_2: [\lambda]^{<\theta} \rightarrow \mu$ capture the $E_*$-equivalence classes, i.e. $G_2(u_1) = G_2(u_2) \iff u_1 E_* u_2$. 
It remains to verify that: 
\begin{quotation}
if $w \in [\lambda]^n$, $(w, R\rstr w)$ is a complete hypergraph, 
\\ $\overline{u} = \langle u_v : v \in [w]^{k} \rangle$, 
$v \in [w]^{k} \implies v \subseteq u_v \in [\lambda]^{<\theta}$ 
\\ and $G_2 \rstr \{ u_v : v \in [w]^{k} \}$ is constant, 
\\ then for some $v \in [w]^{k}$ we have $w \subseteq u_v$. 
\end{quotation}
Suppose then that we are given such a $w \in [\lambda]^{n}$ and sequence $\langle u_v : v \in [w]^{k} \rangle$. 
Since $G_2$ is constant on $\langle u_v : v \in [w]^k \rangle$, the elements of this sequence are all $E_*$-equivalent 
and so have the same order type $\zeta$. For each $v \in [w]^{k}$ define $f_v : \zeta \rightarrow u_v$ to be the 1-to-1 and onto order preserving map. 
Now define $h: [w]^{k} \rightarrow  [\zeta]^{k}$ by:
\[ h(v) = \{   \epsilon < \zeta : f_v(\epsilon) \in v \} \in [\zeta]^{k} \]
so informally, $h(v)$ is the canonical preimage of $v$ in $\zeta$ (recall that $v \subseteq u_v$ by hypothesis so this is well defined). 
Let the union of these canonical preimages be  
\[ u_* = \bigcup \{ h(v) : v \in [w]^{k} \} \in [\zeta]^{<\aleph_0} \mbox{, in fact $\in ~[\zeta]^{\leq k\cdot\binom{n}{k}}$ } \]
At this point, the problem is finitary: for each $v \in [w]^{k}$, let $u^\prime_v = \{ f_v(\epsilon) : \epsilon \in u_* \}$.  
The sequence $\langle u^\prime_v : v \in [w]^k \rangle$ is a sequence of elements of $[\lambda]^{<\aleph_0}$, and 
by construction $v \subseteq u^\prime_v$ for each $v \in [w]^k$. Moreover, for each $v \in [w]^k$, 
$u^\prime_v \subseteq u_v$, and the original sets $\{ u_v : v \in [w]^{k} \}$ are pairwise $E_*$-equivalent. The definition of $E_*$ 
entails that pushing forward a finite set via an order preserving map doesn't change the value of $G_1$, and evidently each 
$u^\prime_v$ has the same preimage $u_*$. Thus 
$G_1 \rstr \{ u^\prime_v : v \in [w]^{k} \}$ is constant. 
As $G_1$ is a witness to $\pri_{n,k}(\lambda, \mu, \aleph_0)$, there is $v \in [w]^{k}$ for which $w \subseteq u^\prime_v$, {thus} $w \subseteq u_v$.  
This shows that $G_2$ witnesses $\pri_{n,k} (\lambda, \mu, \theta)$, which completes the proof. 
\end{proof}

We arrive to a fundamental lemma.  Appropriately, its proof works like a proof of amalgamating 
diagrams of models. 

\begin{lemma} \label{z4}
If $k \geq 2$, $n = k+1$, $\mu = \mu^{<\theta} = \cf(\mu)$, 
$\lambda \in [\mu,  \mu^{+{k-1}}]$, 
and $\ii \in \{ \zero, \one \}$, 
then $\pri_{n,k} (\lambda, \mu, \theta)$. 
\end{lemma}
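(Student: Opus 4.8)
The plan is to prove $\pri_{n,k}(\lambda, \mu, \theta)$ for $\lambda \in [\mu, \mu^{+(k-1)}]$ by induction on $k \geq 2$. By Claim \ref{z3} (or rather the obvious monotonicity in $\theta$ from Observation \ref{z3a}(1)) it suffices to treat $\theta = \aleph_0$, and by Observation \ref{z3a}(1) again it suffices to treat the largest case $\lambda = \mu^{+(k-1)}$, since a witness for a larger $\lambda$ restricts to a witness for a smaller one. So the real goal is: for $k \geq 2$, $n = k+1$, $\mu$ with $\mu = \mu^{<\theta} = \cf(\mu)$, one has $\pri_{k+1,k}(\mu^{+(k-1)}, \mu, \aleph_0)$.

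For the base case $k = 2$, $\lambda = \mu^{+1}$: we must find $G : [\lambda]^{<\aleph_0} \to \mu$ so that whenever $w \in [\lambda]^3$, $u_v \supseteq v$ for $v \in [w]^2$, and $G$ is constant on $\{u_v : v \in [w]^2\}$, then $w \subseteq u_v$ for some $v$. The natural device is to fix, for each $\alpha < \lambda = \mu^+$, an injection $g_\alpha : \alpha \to \mu$, and to let $G(u)$ encode, for the largest element $\beta$ of $u$, the pattern $\langle g_\beta(\gamma) : \gamma \in u \setminus \{\beta\}\rangle$ together with enough bookkeeping (order type, and the analogous data at lower levels) to force collisions. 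Concretely, if $w = \{\alpha_0 < \alpha_1 < \alpha_2\}$ and the three pairs have $G$-equal envelopes, then comparing the envelopes of $\{\alpha_0,\alpha_2\}$ and $\{\alpha_1,\alpha_2\}$ via $g_{\alpha_2}$ forces $\alpha_1 \in u_{\{\alpha_0,\alpha_2\}}$ or symmetric statements; pushing this through should yield $w \subseteq u_v$ for one of the three $v$. The point is exactly the Kuratowski-style decomposition underlying Theorem \ref{t:yes}, dualized.

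For the inductive step, assume the result for $k - 1$ (i.e. $\pri_{k,k-1}(\mu^{+(k-2)}, \mu, \aleph_0)$) and prove it for $k$, with $\lambda = \mu^{+(k-1)}$. Fix for each $\beta < \lambda$ an injection $\pi_\beta : \beta \to \mu^{+(k-2)}$; this "pushes down" by one cardinal step. Given a finite $u \subseteq \lambda$ with top element $\beta$, let $u^- = \pi_\beta[u \setminus \{\beta\}] \subseteq \mu^{+(k-2)}$, and apply the inductive colouring $G^-$ (for parameters $k-1$, $n-1=k$) to the image. Define $G(u)$ to record: $\otp(u)$, the value $G^-$ assigns to the relevant pieces of $u^-$, and the requisite "gluing" data recording how the $\pi_\beta$'s for different top elements interact (so that the $E_*$-type of Claim \ref{z3}'s proof is available at each level). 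Now suppose $w \in [\lambda]^{k+1}$ with all $u_v$, $v \in [w]^k$, having the same $G$-colour; let $\beta = \max w$ and $w' = w \setminus \{\beta\}$. The $k$ sets $u_v$ for $v \ni \beta$ push down under $\pi_\beta$ to envelopes of the $(k-1)$-subsets of $\pi_\beta[w']$, all with the same $G^-$-colour; applying the inductive covering property to $\pi_\beta[w'] \in [\mu^{+(k-2)}]^{k}$ yields some $(k-1)$-set $v' \subseteq \pi_\beta[w']$ with $\pi_\beta[w'] \subseteq$ the corresponding envelope, i.e. a $v \in [w]^k$ with $\beta \in v$ and $w' \subseteq u_v$; since also $\beta \in v \subseteq u_v$, we get $w \subseteq u_v$, as required for $\pr^0$. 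In the $\ii = 1$ case one only needs to handle $R$-complete $w$, which is a weaker hypothesis, so the same argument applies verbatim.

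\textbf{Main obstacle.} The delicate point is bookkeeping: making the colour $G(u)$ carry \emph{all} the information needed so that $G$-constancy on the $u_v$'s propagates cleanly down the $k$ levels — order types, the inductive colours, and the compatibility of the various injections $\pi_\beta$ — while keeping the number of colours at exactly $\mu$. This is where $\mu = \mu^{<\theta} = \cf(\mu)$ is used: each layer of data lives in $[\mu]^{<\aleph_0}$ or in $\mu$ itself, a product of $<\theta$-many of which still has size $\mu$, and regularity of $\mu$ ensures the auxiliary injections $\pi_\beta : \beta \to \mu^{+(k-2)}$ and their coherence can be arranged. The induction is essentially a relativized, "coloured" version of the classical Kuratowski--Sierpiński argument for $(\aleph_{\alpha+m}, m, \aleph_\alpha) \to m+1$, and the main work is verifying that one descent step of that argument survives the extra envelope structure of Definition \ref{x2}; the rest is monotonicity (Observation \ref{z3a}) and the $\theta = \aleph_0$ reduction (Claim \ref{z3}).
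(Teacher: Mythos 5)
Your proposal takes a genuinely different route from the paper, and it has a gap that I do not think is merely bookkeeping.

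The paper does not induct on $k$ and does not use a Kuratowski--Sierpi\'nski-style descent via injections $\pi_\beta : \beta \to \mu^{+(k-2)}$. Instead it proves the lemma by an amalgamation-of-colourings argument: it introduces a graded notion of $\ell$-approximation ($\ell = 0, \dots, k$), a tuple $\langle \uu, \overline{\alpha}, \overline{G}\rangle$ with $|\uu| < \mu^{+\ell}$, finitely many ``new'' points $\overline{\alpha}$ outside $\uu$, and a coherent family of partial colourings $\overline{G}$, and then proves by induction on $\ell$ that every $\ell$-approximation has a solution. The successor step in the inner transfinite induction over $\uu$ (which is how the $\ell > 0$ case proceeds) is reduced to a \emph{smaller} $(\ell-1)$-approximation with one extra distinguished point. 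This is where the cardinal arithmetic $\lambda \leq \mu^{+(k-1)}$ enters, and it never needs to push an already-given envelope down to a smaller cardinal.

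Your descent step fails structurally, for the following reason. In Definition~\ref{x2}, $G$ is a function of the set $u$ \emph{alone}: it does not see which $v \in [w]^k$ the envelope $u_v$ is supposed to envelop, it does not see $w$, and it does not see $\beta = \max w$. Consequently, the quantity ``$\pi_\beta[u_v \setminus \{\beta\}]$'' that your induction wants to feed to $G^-$ is not even definable from $u_v$: $\beta$ is external data. You attempt to route around this by using $\max u_v$ in place of $\beta$, but then the difficulty reappears in two ways. First, the envelopes $u_v$ given in \ref{x2} are arbitrary finite supersets of $v$ inside $\lambda$, so each $u_v$ may contain ordinals strictly above $\max w$, and distinct $u_v$'s will generally have distinct maxima; the maps $\pi_{\max u_v}$ are then unrelated injections into $\mu^{+(k-2)}$, and $G$-constancy of the $u_v$'s yields only order-isomorphic pictures in $\mu^{+(k-2)}$, not equal ones, so the inductive hypothesis $\pr^{\ii}_{k,k-1}(\mu^{+(k-2)},\mu,\aleph_0)$ cannot be applied to a single common $\pi_\beta[w']$. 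Second, even if all maxima happened to coincide with $\beta$, you would still need $u_v \subseteq \beta + 1$ for $\pi_\beta$ to be defined on $u_v$, which again is not guaranteed. The same obstruction is already visible in your base case $k=2$: the three envelopes $u_{\{\alpha_0\alpha_1\}}, u_{\{\alpha_0\alpha_2\}}, u_{\{\alpha_1\alpha_2\}}$ need not share a top element, so comparing $g_{\max u_v}$-patterns across them gives no coincidence. To identify this as the ``main obstacle'' and say it is bookkeeping understates it; it is precisely why the paper discards the direct push-down and instead builds $G$ incrementally by amalgamating compatible partial colourings over larger and larger $\uu$, where at each step the set of ``new'' finite subsets has size $<\mu$ (Case~$\ell = 0$) or the problem is strictly reduced in $\ell$ (Case~$\ell > 0$, the successor substep).

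One smaller remark: your reduction to $\theta = \aleph_0$ is fine in spirit, though note that Claim~\ref{z3} as literally stated concerns $\lambda = \mu^{+k}$; one needs to observe (as the paper's proof implicitly does) that the argument of \ref{z3} works for any $\lambda$, or simply appeal to \ref{z3a}(1) directly for the direction you use.
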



\begin{proof}
By Claim \ref{z3}, without loss of generality, $\theta = \aleph_0$. 
Let $M$ be given, either as the model of $T_{n,k}$ on $\lambda$ given with the data of $\prp$ or else as the complete graph on $\lambda$, or recall 
Observation \ref{z3a}(2). 
We proceed in stages.

First, when $\uu \subseteq \lambda$, $G : [\uu]^{<\aleph_0} \rightarrow \mu$, say that the pair $(\uu, G)$ satisfies the requirements of $\pri$ to mean:

\begin{quotation}
if $w \in [\uu]^n$, $(w, R\rstr w)$ is a complete hypergraph, 
\\ $\overline{u} = \langle u_v : v \in [w]^{k} \rangle$, 
$v \in [w]^{k} \implies v \subseteq u_v \in [\lambda]^{<\aleph_0}$ 
\\ and $G \rstr \{ u_v : v \in [w]^{k} \}$ is constant, 
\\ then for some $v \in [w]^{k}$ we have $w \subseteq u_v$. 
\end{quotation}

Second, we define a family of approximations. 
For each $\ell = 0, \dots, k$ let $\ap_\ell$ be the set of $\ell$-approximations $\bx$, where 
\[ \bx = \langle \uu, \overline{\alpha}, \overline{G} \rangle \]
and these objects satisfy:
\begin{enumerate}[label=\emph{(\alph*)}]
\item $\uu \subseteq \lambda$, $|\uu| < \mu^{+\ell}$
\item $\overline{\alpha} = \langle \alpha_m : m < k - \ell \rangle$ is a sequence of ordinals 
from $\lambda \setminus \uu$ with no repetition
\item let $A = \{ \alpha_m : m  < k - \ell \}$
\item for $j < \lgn(\overline{\alpha})$, let $\uu_{\alpha_j} = \uu \cup \{ \alpha_m : m \neq j \}$
\item $\overline{G} = \langle G_\alpha : \alpha \in A \rangle$
\item for $\alpha \in A$, $G_\alpha : [\uu_\alpha]^{<\aleph_0} \rightarrow \mu$
\item for each $\alpha \in A$, $(\uu_{\alpha}, G_\alpha)$ satisfies the requirements of $\pri$ 
\\ in the sense of Step 0.
\item for each $\alpha, \beta \in A$, $G_\alpha$ and $G_\beta$ are compatible functions, meaning ,that 
\[ G_\alpha \rstr {\uu_{\alpha} \cap \uu_{\beta}} = G_\beta \rstr {\uu_{\alpha} \cap \uu_{\beta}} \]
\end{enumerate}

\br
\noindent Given $\bx \in \ap = \bigcup_\ell \ap_\ell$, we say that ``$G$ is a solution for $\bx$'' to mean that:
\begin{itemize}
\item $G$ is a function with domain $[\uu_\bx \cup \{ \alpha_{\bx, m} : m < k - \ell \}]^{<\aleph_0}$ which is into $\mu$
\item $G \supseteq G_{\bx, \alpha_m}$ for $m < k-\ell$
\item $(\uu_\bx \cup \{ \alpha_{\bx, m} : m < k - \ell \}, G)$ satisfies the requirements of $\pri$.
\end{itemize}

Third, we prove by induction on $\ell \leq k$ that each $\bx \in \ap_\ell$ has a solution. 

Case 1: $\ell = 0$. 
In this case, writing $\uu^+ = \uu_\bx \cup \{ \alpha_{\bx, m} : m < k \}$, we will need to extend the coloring to 
all elements of $[\uu^+]^{<\aleph_0}$.
On one hand, let $\langle u_i : i < i_* < \mu \rangle$ list $[\uu^+]^{<\aleph_0} \setminus \bigcup \{ [\uu_{\bx, m}]^{<\aleph_0} : m < k \}$ with no repetition: 
these are the $<\mu$ sets whose coloring needs to be determined. On the other hand, let $X = \bigcup \{ \rn(G_{\bx, \alpha_m}) : m < k \}$. 
As $\ell = 0$, $|X| < \mu$. Enumerate $\mu \setminus X$ as $\langle \gamma_\epsilon : \epsilon < \mu \rangle$, without repetition. 
Then there is enough room to trivially satisfy $\pri$ by avoiding further monochromaticity: simply 
define $G(u)$ to be $G_{\bx, \alpha_m}(u)$ if $\alpha_{\bx, m} \notin u \land m < k$ and otherwise to be $\gamma_\epsilon$ if $u = u_\epsilon$.
\br


Case 2: $\ell > 0$. In this case we build a solution by induction. 
Let $\langle \beta_i : i < |\uu_\bx| \rangle$ list $\uu_\bx$ with no repetition. 
For each $i < |\uu_\bx|$, let $\sigma_i = \{ \beta_j : j < i \} \cup \{ \alpha_{\bx, m} : m < k - \ell \}$ be the initial segment along 
with the new points. 

By induction on $i \leq |\uu_\bx| < \mu^{+\ell}$, we choose $G_i$ such that:
\begin{itemize}
\item $G_i$ is a function from $[\sigma_i]^{<\aleph_0}$ into $\mu$ 
\item $G_i$ satisfies the requirements in $\pr^{\zero}_{n,k}(\lambda, \mu)$
\item $G_i$ extends $G_j$ for $j < i$
\item $G_i$ is compatible with $G_{\bx, \alpha_m}$ for $m < k -\ell$
\end{itemize}

Subcase 1: $|\sigma_i| < n$, i.e. the initial segment is finite: trivial.

Subcase 2: $i$ is a limit. Since each function extends its predecessors, let $G_i = \bigcup_{j < i} G_j$. 

Subcase 3: $i = i_* + 1$ is an infinite successor.  
In this case we reduce to a previous amalgamation problem as follows.\footnote{The induction allows us to 
reduce the $n$-amalgamation problem over a set of size $\kappa$ to an $n+1$-amalgamation problem 
over a set of size less than $\kappa$.}   
Define $\by_i \in \ap_{\ell - 1}$ by:
\begin{quotation}
$\uu_{\by_i} = \{ \beta_j : j < i_* \}$

\noindent $\alpha_{\by_i, m}$ is $\alpha_{\bx, m}$ for $m < k-\ell$ and $\beta_{i_*}$ for $m = k-\ell < k-(\ell-1)$. 

\noindent $G_{\alpha}$ is given by $G_{\bx, \alpha_{\bx, m}}$ for $m < k-\ell$ and by $G_{i_*}$ (i.e., by the inner inductive hypothesis on $i$) 
for $m = k - \ell$.
\end{quotation}

This data defines an element $(\uu, \overline{\alpha}, \overline{G})$ of $\ap_{\ell - 1}$, so now 
use the (outer, i.e. on $\ell$) inductive hypothesis to complete the proof.
\end{proof}

\begin{claim} \label{c10}
Suppose $(\lambda, k, \mu) \rightarrow n$ and $k<n$. Then for any $F: [\lambda]^k \rightarrow [\lambda]^{<\mu}$, e.g. 
$F$ a strong set mapping, 
there is $w \in [\lambda]^n$ such that for all $u \in [w]^k$, $w \not\subseteq F(u)$. 
\end{claim}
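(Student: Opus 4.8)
The plan is to reduce the statement to the arrow notation $(\lambda, k, \mu) \longrightarrow n$ by trimming $F$ into an honest set mapping. Given $F : [\lambda]^k \to [\lambda]^{<\mu}$ (in particular when $F$ is a strong set mapping, so possibly $u \subseteq F(u)$), define $F' : [\lambda]^k \to [\lambda]^{<\mu}$ by $F'(u) = F(u) \setminus u$. Then $F'(u) \cap u = \emptyset$ for every $u \in [\lambda]^k$, so $F'$ is a set mapping in the sense of the first definition, and $|F'(u)| \leq |F(u)| < \mu$, so the codomain restriction is respected.

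Next I would invoke the hypothesis $(\lambda, k, \mu) \longrightarrow n$ applied to $F'$: there is a set $w \subseteq \lambda$ with $|w| = n$ which is free with respect to $F'$, i.e. $F'(u) \cap w = \emptyset$ for every $u \in [w]^k$. Unwinding the definition of $F'$, this says $(F(u) \setminus u) \cap w = \emptyset$, equivalently $F(u) \cap w \subseteq u$, for every $u \in [w]^k$.

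Finally I would derive the ``not covered'' conclusion from the cardinality gap. Fix any $u \in [w]^k$. If we had $w \subseteq F(u)$, then $w = w \cap F(u) \subseteq u$, contradicting $|w| = n > k = |u|$ (using $k < n$). Hence $w \not\subseteq F(u)$ for every $u \in [w]^k$, which is exactly the desired statement.

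I do not expect any real obstacle here: the content is purely the bookkeeping translation between ``free set for the set mapping $F \setminus \mathrm{id}$'' and ``$n$-set not covered by $F$'', with the strict inequality $k < n$ doing the last bit of work. (One may remark in passing that the same argument, combined with Theorem~\ref{t:yes}, is what underlies Lemma~\ref{f2ax}, though there one must additionally arrange the free set to sit on an $R$-complete configuration.)
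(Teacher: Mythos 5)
Your proof is correct and follows essentially the same route as the paper's: trim $F$ to a genuine set mapping by removing $u$, apply the free-set hypothesis, and use $k < n$ to rule out $w \subseteq F(u)$. The paper's version differs only in a cosmetic detail (it additionally adjoins $\{0\}$ to force nonempty values, which your argument shows is unnecessary).
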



\begin{proof}
If $F$ is a set mapping, this is immediate. 
If not, identify $\lambda$ with $\lambda \setminus \{ 0 \}$ and 
define $G: [\lambda]^k \rightarrow [\lambda]^{<\mu}$  by: 
$G(u) = (F(u) \setminus u)~ \cup ~\{ 0 \}$ [so that $G(u) \neq \emptyset$].  
Then $G$ is a set mapping, so there is some $w \in [\lambda]^n$ such that for all $u \in [w]^k$, 
$w \cap G(u) = \emptyset$. Thus for each $u \in [w]^k$, 
if $w \cap F(u) \neq \emptyset$ then $w \cap F(u) \subseteq u$.  
Since $|u| = k$, this means $|w \cap F(u) | \leq k < n = |w|$ so 
$w \not\subseteq F(u)$ as desired. 
\end{proof}

We now prove Lemma \ref{f2ax}, promised on p. \pageref{f2ax} above. 

\begin{proof}[Proof of Lemma \ref{f2ax}]
Let us define a model $N = (\lambda, R^N)$ by:
$R^N = \{ (\alpha_0, \dots, \alpha_k) : \alpha_i < \lambda \mbox{ and } i_1 < i_2 \leq k \implies \alpha_{i_1} \neq \alpha_{i_2} \mod n \mbox{ and } 
i_1 < i_2 \leq k \implies (\exists \beta)(\alpha_{i_1} < n\beta \leq \alpha_{i_2} \lor \alpha_{i_2} < n\beta \leq \alpha_{i_1} ) \}$. 
By definition $R^N$ is irreflexive, symmetric, and $(k+1)$-ary. Let us first show that if $w \in [\lambda]^{n+1}$ then $w$ is not a complete $R^N$-hypergraph. 
If $|w| = n+1$, for some $\alpha_1 \neq \alpha_2 \in w$ we have $\alpha_1 = \alpha_2 \mod n$. Choose $v \subseteq w$ such that $\alpha_1 \in v$, $\alpha_2 \in v$, 
and $|v| = k+1$. Then by definition $R$ cannot hold on $\{ \alpha : \alpha \in v \}$, so $w$ cannot be a complete $R$-hypergraph. So $N$ is a submodel of 
some $N^\prime \models T_{n,k}$ of cardinality $\lambda$.

Second, let us show that if $F: [\lambda]^k \rightarrow [\lambda]^{<\mu}$ is a given strong set mapping 
then for some $w \in \bad$ we have $(\forall v \in [w]^k)(w \not\subseteq F(v))$. 
First let $F_1: [\lambda]^k \rightarrow [\lambda]^{<\mu}$ be defined by: if $v = \{ \alpha_0, \dots, \alpha_{k-1} \} \in [\lambda]^k$ then 
\begin{align*}
 F_1(v) = &\{ \beta : \mbox{ for some } i_0, \dots, i_k < n \mbox{ and } \gamma < \lambda \\
 & \mbox{ we have } (\beta < n\gamma + n) \land (\gamma < n\beta + n)  \\
& \mbox{ and } \gamma \in F(\{ n\alpha_0 + i_0, \dots, n\alpha_{k-1}+i_{k-1} \}) \} 
\end{align*} 
As we have assumed $(\lambda, k, \mu) \rightarrow n$, by Claim \ref{c10} there is 
$w_1 = \{ \alpha^*_i : i < n \}$ such that $\alpha^*_0 < \cdots < \alpha^*_{n-1} < \lambda$ 
and for all $v \in [n]^k$,  $w_1 \not\subseteq F_1( \{ \alpha^*_\ell : \ell \in v \})$. 

Define $\beta_i = n \alpha^*_i + i$ for $i<n$ and let $w_2 = \{ \beta_i : i < n \}$, and let us show $w_2$ is as required for $F$. 
First, trivially $|w_2|=n$, as $|w_1| = n$. Second, by definition of $R^N$ above, $w_2$ is a complete $R^N$-hypergraph. 
Third, let us show that if $u \in [w_2]^k$ and $w_2 \subseteq F(u)$ then we get a contradiction. 
Let $v \in [n]^k$ be such  that  $u = \{ \beta_i : i \in v \}$. 
Then $u_1 := \{ \alpha^*_i : i \in v \} \in [w_1]^k$ 
and $w_1 \subseteq F(u_1)$. 
[This is because for each $\alpha^*_i$, $i<n$ we presently have $\beta_i \in F(u)$ so there is an element  
\[ \gamma \in F(\{ n\alpha^*_0 + 0, \dots, n\alpha^*_{k-1}+(k-1) \}) = F(\beta_0, \dots, \beta_{k-1}) \] 
such that $(\alpha^*_i < n\gamma + n) \land (\gamma < n\alpha^*_i + n)$, namely $\gamma = \beta_i$, 
which belongs to $F(\beta_0, \dots, \beta_{k-1})$
since $F$ is a strong set mapping.] This contradicts the choice of $w_1$. We have shown that for all 
$u \in [w_2]^k$, $w_2 \not\subseteq F(u)$, so $w_2$ is as required which completes the proof. 

\noindent \emph{Lemma \ref{f2ax}.} \hfill
\end{proof}

\begin{conv} \label{c-superscript}
In the remainder of the paper, 
\begin{enumerate}
\item ``$\pr$'' used without a superscript means $\pr^0$. 
\item All theories are complete and countable unless otherwise stated. 
\end{enumerate}
\end{conv}

\br
\section{Separation of variables and optimal ultrafilters} \label{s:various}

In this section we explain the advances from \cite{MiSh:999} and \cite{MiSh:1030}, Theorems \ref{t:separation} 
and \ref{t:exists} below, which frame the rest of the proof. 

The gold standard for saturation is the following class of ultrafilters, called good, introduced by Keisler \cite{keisler-1}. 
Keisler proved that good ultrafilters exist, assuming GCH, and Kunen eliminated the assumption of GCH \cite{kunen}.

\begin{defn} A filter $\de$ on $\lambda$ is called \emph{good} if every monotonic $f: [\lambda]^{<\aleph_0} \rightarrow \de$ 
has a multiplicative refinement. In other words, if $f$ satisfies $u \subseteq v$ implies $f(v) \subseteq f(u)$ then there is 
$g: [\lambda]^{<\aleph_0} \rightarrow \de$ such that $g(u) \subseteq f(u)$ for all finite $u$ and $g(u \cup v) = g(u) \cap g(v)$ for all finite $u,v$. 
\end{defn}

\begin{fact}[Keisler \cite{keisler}]
If $\de$ is a regular ultrafilter on $\lambda$, then $\de$ is good if and only if for every complete countable theory $T$ and
any $M \models T$, $M^\lambda/\de$ is $\lambda^+$-saturated. 
\end{fact}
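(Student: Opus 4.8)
This is a classical result of Keisler, and I would reconstruct its proof by treating the two implications separately: \textbf{(a)} if $\de$ is good then every $M^\lambda/\de$ is $\lambda^+$-saturated, and \textbf{(b)} if every such ultrapower is $\lambda^+$-saturated then $\de$ is good.

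\emph{Direction (a).} Fix $M \models T$, set $N = M^\lambda/\de$, and let $p(x) = \{ \varphi_i(x ; \bar a_i) : i < \lambda \}$ be a type over a subset of $N$ of size $\le \lambda$, where each $\bar a_i$ is represented by a tuple of functions $\lambda \to M$ (written $t \mapsto \bar a_i[t]$). Using that $\de$ is regular, fix a regularizing family $\langle A_i : i < \lambda \rangle \subseteq \de$, so that every $t < \lambda$ belongs to $A_i$ for only finitely many $i$. Define $f : [\lambda]^{<\aleph_0} \to \de$ by
\[ f(u) \;=\; \Big( \bigcap_{i \in u} A_i \Big) \,\cap\, \{\, t < \lambda : M \models \exists x\, \bigwedge_{i \in u} \varphi_i(x ; \bar a_i[t]) \,\}. \]
Each $f(u)$ lies in $\de$ --- the second factor because $\bigwedge_{i\in u}\varphi_i$ is a finite part of $p$, hence realized in $N$, hence true on a $\de$-large set of coordinates by \Los theorem --- and $f$ is monotonic. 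Goodness provides a multiplicative refinement $g$. I would then build a realizing element $c \in M^\lambda$ coordinatewise: for each $t < \lambda$ the set $u(t) = \{ i < \lambda : t \in g(\{i\}) \}$ is \emph{finite}, since $g(\{i\}) \subseteq f(\{i\}) \subseteq A_i$; and $t \in g(u(t)) \subseteq f(u(t))$, so I may choose $c[t] \in M$ with $M \models \bigwedge_{i \in u(t)} \varphi_i(c[t] ; \bar a_i[t])$. Finally, for each $i$ the set $g(\{i\})$ is in $\de$ and at every coordinate in it $i \in u(t)$, so $M \models \varphi_i(c[t];\bar a_i[t])$ there; by \Los theorem $c/\de$ realizes $\varphi_i(x;\bar a_i)$. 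Thus $c/\de$ realizes $p$, and $N$ is $\lambda^+$-saturated. Regularity is essential precisely to make the fibers $u(t)$ finite.

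\emph{Direction (b).} I would argue by contraposition. Given a monotonic $f : [\lambda]^{<\aleph_0} \to \de$ with no multiplicative refinement, I want to produce a complete countable $T$, a model $M \models T$, and a type over $M^\lambda/\de$ in $\le \lambda$ parameters that is finitely satisfiable but omitted. The plan is to take for $M$ a fixed countable structure rich enough to record finite approximations and to support ``pseudofinite joins of atoms'' in its ultrapowers --- concretely one can use the upper semilattice $([\omega]^{<\aleph_0}, \subseteq, \cup, \emptyset)$, with a little extra structure if convenient --- and then, using regularity again, to place $\lambda$ pairwise distinct ``generic atoms'' $\langle b_\alpha : \alpha < \lambda \rangle$ in $N = M^\lambda/\de$. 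One encodes $f$ into a type $p(x)$ over $\{ b_\alpha : \alpha < \lambda \}$ asserting, roughly, that for each finite $u$ the join $\bigcup_{\alpha \in u} b_\alpha$ lies below $x$ exactly ``on a copy of $f(u)$''; monotonicity of $f$ gives finite satisfiability of $p$, while a realization of $p$ decodes directly into sets $X_\alpha = g(\{\alpha\}) \in \de$ with $\bigcap_{\alpha \in u} X_\alpha \subseteq f(u)$ for all finite $u$ --- that is, a multiplicative refinement of $f$ (via $g(u) := \bigcap_{\alpha \in u} X_\alpha$). Since $f$ has none, $p$ is omitted, contradicting $\lambda^+$-saturation.

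\emph{Where the work is.} Direction (a) is essentially bookkeeping around \Los theorem, with regularity entering only to control the fibers. The real obstacle is direction (b): one must choose the countable model $M$ so that its first-order types over regular ultrapowers correspond tightly to multiplicative refinements, and make this correspondence robust enough to manufacture an omitted type from \emph{any} bad $f$. This is the step where the combinatorics of regular ultrafilters genuinely enters; alternatively one can package it by invoking the existence of a single ``hardest'' countable theory for this problem, but the concrete semilattice construction keeps the argument self-contained.
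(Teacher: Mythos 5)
The paper states this as a cited classical result of Keisler and does not prove it, so there is no in-paper argument to compare against; I evaluate your reconstruction on its own terms. Your direction (a) is correct and essentially complete: pulling the type back to a monotone $f:[\lambda]^{<\aleph_0}\to\de$ via the \los map, intersecting with a regularizing family so that the fibers $u(t)=\{i:t\in g(\{i\})\}$ are finite, and choosing $c[t]$ to realize the $u(t)$-fragment is exactly the standard argument; the only minor omissions are the trivial case $u(t)=\emptyset$ and an explicit appeal to multiplicativity of $g$ to see that $t\in g(u(t))$.

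Direction (b), however, has a genuine gap. In the free join-semilattice $([\omega]^{<\aleph_0},\subseteq,\cup,\emptyset)$ the finite fragment $\{\,b_\alpha\subseteq x:\alpha\in u\,\}$ is consistent at \emph{every} coordinate $t$ (take $x=\bigcup_{\alpha\in u}b_\alpha[t]$), so the finite satisfiability you attribute to monotonicity of $f$ is automatic for any $f$ whatsoever, which is already a warning sign that the encoding is too loose. More seriously, given a realization $c$, the sets $X_\alpha=\{t:b_\alpha[t]\subseteq c[t]\}$ do lie in $\de$, but there is no reason that $\bigcap_{\alpha\in u}X_\alpha\subseteq f(u)$: nothing in the structure forbids $c[t]$ from being an enormous finite set that absorbs all of the $b_\alpha[t]$ at a coordinate $t\notin f(u)$. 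The heart of the classical converse is to arrange $M$ and the parameters so that the conjunction $\bigwedge_{\alpha\in u}(b_\alpha[t]\subseteq x)$ is \emph{inconsistent in $M$} whenever $t\notin f(u)$ (for $u$ contained in the finite active index set at $t$). That requires genuine extra structure beyond a free semilattice: a bounded rank or size function so that not all of the $b_\alpha[t]$ can fit under one element, or an auxiliary ceiling parameter $d$ with $x\subseteq d$ in the type and $d[t]$ calibrated against $f$, or a coding structure such as hereditarily finite sets in which the approximations carry their own cardinality bookkeeping. Your hedge about adding a little extra structure gestures at exactly this, but that extra structure is where the theorem lives; as written, the converse is not established.
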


We will informally say ``$\de$ is good for $T$'' to mean that for all\footnote{When $\de$ is regular, 
if $M \equiv N$ in a countable signature then $M^\lambda/\de$ is $\lambda^+$-saturated if and only if $N^\lambda/\de$ is $\lambda^+$-saturated, 
Keisler \cite{keisler} Corollary 2.1a.} $M \models T$,  $M^\lambda/\de$ is $\lambda^+$-saturated. 
Thus, $\de$ is good precisely when it is good for every (complete, countable) $T$. 
For more on this correpondence, see \cite{MiSh:1030} \S 2.  

In this paper a main object is to show certain regular ultrafilters are good for some of the $T_{k+1,k}$ while not for others. 
Our first point of leverage for seeing gradations in goodness will be from \cite{MiSh:999}. 
Towards this, let us set notation for Boolean algebras arising as the completion of the Boolean algebra generated by $\alpha$ (usually, $2^\lambda$) 
independent partitions of size $\mu$. 

\begin{defn} 
For an infinite cardinal $\mu$ and an ordinal $\alpha$, 
\begin{enumerate}
\item Let $\fin_{\mu}(\alpha)$ denote the set of partial functions from $\alpha$ to $\mu$ with finite domain. 
\item $\ba^0 = \ba^0_{\alpha, \mu}$ is the Boolean algebra generated by:
\\ $\{ \mx_f : f \in \fin_{\mu}(\alpha) \}$ freely subject to the conditions that
\begin{enumerate}
\item $\mx_{f_1} \leq \mx_{f_2}$ when $f_1 \subseteq f_2 \in \fin_{\mu}(\alpha)$ 
\item $\mx_f \cap \mx_{f^\prime} \neq 0$ iff $f, f^\prime$ are compatible functions. 
\end{enumerate}
\item $\ba^1_{\alpha, \mu}$ is the completion of $\ba^0_{\alpha, \mu}$. 
\item When $\ba$ is a Boolean algebra, $\ba^+$ denotes $\ba \setminus \{ 0 \}$. 
\end{enumerate}
\end{defn}

\begin{conv}
We will assume that giving $\ba$ determines $\alpha$, $\mu$, and a set of generators $\langle \mx_f : f \in \fin_{\mu}(\alpha)\rangle$. 
\end{conv}

\begin{fact} \label{fact-iff}
The existence of $\ba^0_{2^\lambda, \mu}$ thus its completion 
is by Engelking-Karlowicz \cite{ek}.  See also Fichtenholz and Kantorovich\cite{f-k}, 
Hausdorff \cite{hausdorff}, or Shelah \cite{Sh:c} Appendix, Theorem 1.5. 
\end{fact}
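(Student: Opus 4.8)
The plan is to prove the underlying existence statement — that there is a Boolean algebra $\ba^0_{2^\lambda,\mu}$ generated by elements $\mx_f$ ($f \in \fin_\mu(2^\lambda)$) obeying exactly the relations (a) and (b) of its defining presentation — by exhibiting a concrete field of sets realizing those relations; the content is the classical construction of an independent family of $\mu$-valued partitions, after which existence of the completion $\ba^1_{2^\lambda,\mu}$ is automatic.

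First I would set $\alpha = 2^\lambda$ and take $Z = \{ (s,h) : s \in [\alpha]^{<\aleph_0},\ h : s \to \mu \}$, a set of cardinality $\max(2^\lambda,\mu)$. For $i < \alpha$ and $j < \mu$ I would define
\[ Z^i_j = \begin{cases} \{ (s,h) \in Z : i \in s \text{ and } h(i) = j \}, & j \neq 0, \\ \{ (s,h) \in Z : i \notin s \text{ or } h(i) = 0 \}, & j = 0, \end{cases} \]
so that for each fixed $i$ the sequence $\langle Z^i_j : j < \mu \rangle$ is a genuine partition of $Z$. The key point — the ``independence'' of these partitions — is that given distinct $i_1,\dots,i_r < \alpha$ and arbitrary $j_1,\dots,j_r < \mu$, the pair $(s,h)$ with $s = \{i_1,\dots,i_r\}$ and $h(i_t) = j_t$ witnesses $\bigcap_{t \le r} Z^{i_t}_{j_t} \neq \emptyset$.

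Next I would put $\mx_f = \bigcap_{i \in \dom f} Z^i_{f(i)} \subseteq Z$ for $f \in \fin_\mu(\alpha)$ (so $\mx_\emptyset = Z$) and let $\ba$ be the subalgebra of $\mathcal{P}(Z)$ generated by $\{ \mx_f : f \in \fin_\mu(\alpha) \}$. One checks directly that $f_1 \supseteq f_2$ forces $\mx_{f_1} \subseteq \mx_{f_2}$ (the domains are nested and the functions agree there), that $\mx_f \cap \mx_{f'} = \mx_{f \cup f'} \neq \emptyset$ when $f,f'$ are compatible (using the independence above), and that $\mx_f \cap \mx_{f'} = \emptyset$ when they are incompatible, since then some coordinate $i$ is assigned two distinct values and $Z^i_j \cap Z^i_{j'} = \emptyset$ for $j \neq j'$. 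Thus $\ba$ satisfies precisely the relations (a), (b), so the presented algebra $\ba^0_{2^\lambda,\mu}$ exists and is nontrivial — it surjects onto $\ba$ via $\mx_f \mapsto \mx_f$, and $\ba$ is a nontrivial Boolean algebra — and its completion $\ba^1_{2^\lambda,\mu}$ then exists as the Dedekind–MacNeille completion, which any Boolean algebra admits. This is exactly the Hausdorff / Engelking–Karłowicz argument recorded in the cited references.

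There is no genuine obstacle here; the only point requiring care is the bookkeeping ensuring that the $Z^i_j$ actually \emph{cover} $Z$ rather than merely being pairwise disjoint, which is why the $j=0$ piece is enlarged to absorb the pairs $(s,h)$ with $i \notin s$, after which the independence property is read straight off the definition. The sharper forms in the cited literature additionally realize the $\mx_f$ inside $\mathcal{P}(\lambda)$ when $\mu \le \lambda$, which is more than the present Fact asserts and is not needed for it.
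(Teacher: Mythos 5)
Your construction is correct and is exactly the standard argument: the paper's Fact is a pure citation with no proof supplied, and what you have written is the independent-family construction contained in the cited sources (an explicit field of sets on $Z=\{(s,h):s\in[\alpha]^{<\aleph_0},\ h:s\to\mu\}$ realizing the presentation, after which existence of the presented algebra and of its completion is automatic). Two side remarks. First, your monotonicity computation, namely $f_1\supseteq f_2\Rightarrow \mx_{f_1}\subseteq\mx_{f_2}$, is the right one; the paper's displayed clause (a), which literally reads $\mx_{f_1}\le\mx_{f_2}$ when $f_1\subseteq f_2$, has the inequality reversed relative to its evident intent and to how the $\mx_f$ are used everywhere afterward (e.g.\ in the monotonicity of possibility patterns and in the good-support construction, where extending a function shrinks the corresponding element). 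Second, you are right that the sharper content of Engelking--Kar\l owicz and the paper's later appeal (via \cite{MiSh:999} Theorem 8.1) is the realization of these $2^\lambda$ many $\mu$-valued independent partitions inside $\mathcal{P}(\lambda)$ when $\mu\le\lambda$, i.e.\ a surjective homomorphism $\jj:\mathcal{P}(\lambda)\to\ba^1_{2^\lambda,\mu}$; that cardinality bound is not needed for the bare existence asserted in this Fact, and your $Z$ of size $2^\lambda$ suffices.
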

The next definition was used in Theorem \ref{t:separation}, i.e. \cite{MiSh:999} Theorem 6.13. 
It allows us to involve arbitrary ultrafilters $\de_*$ on complete Boolean algebras in the construction of regular ultrafilters $\de$.  
\begin{defn}[Regular ultrafilters built from tuples, from \cite{MiSh:999} Theorem 6.13] \label{d:built}
Suppose $\de$ is a regular ultrafilter on $I$, $|I| = \lambda$. We say that $\de$ is built from 
$(\de_0, \ba, \de_*)$ when: 

\begin{enumerate}
\item {$\de_0$ is a regular, $|I|^+$-excellent filter on $I$} 
\\ {$($for the purposes of this paper, it is sufficient to use regular and good$)$}
\item {$\ba$ is a Boolean algebra}
\item {$\de_*$ is an ultrafilter on $\ba$}
\item {there exists a surjective homomorphism $\jj : \mcp(I) \rightarrow \ba$ such that:}
\begin{enumerate}
\item $\de_0 = \jj^{-1}(\{ 1_\ba \})$ 
\item $\de = \{ A \subseteq I : \jj(A) \in \de_* \}$.
\end{enumerate}
\end{enumerate}
\end{defn}
It was verified in \cite{MiSh:999} Theorem 8.1 that whenever $\mu \leq \lambda$ and $\ba = \ba^1_{2^\lambda, \mu}$ 
there exists a regular good $\de_0$ on $\lambda$ and a surjective homorphism $\jj: \mcp(I) \rightarrow \ba$ 
such that $\de_0 = \jj^{-1}(1)$. Thus, Definition \ref{d:built} is meaningful. 

Suppose now that $\de$ is built from $(\de_0, \ba, \de_*)$, witnessed by $\jj$.  Consider a complete countable $T$ and $M \models T$. 
Suppose $N \preceq M^\lambda/\de$, $|N| = \lambda$ and $p \in \ts(N)$, where $p = \langle \vp_\alpha(x,{a}_\alpha) : \alpha < \lambda \rangle$. 
(As ultraproducts commute with reducts, we may assume without loss of generality that $T = T^{eq}$ and so that each $a_\alpha$ is a singleton.) 
For each finite $u \subseteq \lambda$, the \los map \L ~sends $u \mapsto B_u$ where 
\[ B_u := \{ t \in I : M \models (\exists x) \bigwedge_{\alpha \in u} \{ R(x, a_\alpha[t]) \}. \]
Let $\mb_u = \jj(B_u)$.  The key model-theoretic property of the sequence ${\langle \mb_u : u \in [\lambda]^{<\aleph_0} \rangle}$ 
in $\ba$ is captured by the following definition. 

\begin{defn} \label{d:poss} \emph{(Possibility patterns \cite{MiSh:999} Definition 6.1)}
Let $\ba$ be a Boolean algebra, normally complete, and $\bar{\vp} = \langle \vp_\alpha : \alpha < \lambda \rangle$ a sequence of formulas. 
Say that $\overline{\mb}$ is a $(\lambda, \ba, T, \bar{\vp})$-possibility when:
\begin{enumerate}
\item $\overline{\mb} = \langle \mb_u : u \in \lao \rangle$ is a sequence of elements of $\ba^+$
\item if $v \subseteq u \in \lao$ then $\mb_u \subseteq \mb_v$ 
\item if $u_* \in \lao$ and $\bc \in \ba^+$ satisfies
\[ \left( u \subseteq u_* \implies \left( ( \bc \leq \mb_u )  ~\lor~ ( \bc \leq 1 - \mb_u ) \right) \right) \land 
\left( \alpha \in u_* \implies \bc \leq \mb_{\{\alpha\}} \right) \]
then we can find a model $M \models T$ and $a_\alpha \in M$ for $\alpha \in u_*$ such that for every $u \subseteq u_*$,
\[ M \models (\exists x)\bigwedge_{\alpha \in u} \vp_\alpha(x;a_\alpha) ~~ \mbox{iff} ~~ \bc \leq \mb_u  .\] 
\end{enumerate}
If $\Delta$ is any set of formulas, we say $\overline{\mb}$ is a $(\lambda, \ba, T, \Delta)$-possibility if it is a 
$(\lambda, \ba, T, \bar{\vp})$-possibility for some sequence $\bar{\vp}$ of formulas from $\Delta$. 
\end{defn}

In some sense, \ref{d:poss} says that the ``Venn diagram'' of the elements of $\bar{\mb}$ accurately reflects the intersection patterns of the given sequence 
of formulas in the monster model. 
We will often keep track of a full type $p \in \ts(N)$, but 
recall that it suffices to deal with $\vp$-types for each $\vp$
because saturation of ultrapowers reduces to saturation of $\vp$-types, \cite{mm1} Theorem 12. 
 
\begin{defn} \emph{(Moral ultrafilters on Boolean algebras, \cite{MiSh:999} Definition 6.3)} \label{d:moral}
We say that an ultrafilter $\de_*$ on the Boolean algebra $\ba$ is $(\lambda, \ba, T, \Delta)$-moral when
for every $(\lambda, \ba, T, \Delta)$-possibility $\overline{\mb} = \langle \mb_u : u \in \lao \rangle$ 
which is a sequence of elements of $\de_*$, 
there is a multiplicative $\de_*$-refinement $\overline{\mb}^\prime = \langle \mb^\prime_u : u \in \lao \rangle$, i.e.
\begin{enumerate}
\item $u_1, u_2 \in \lao \implies \mb^\prime_{u_1} \cap \mb^\prime_{u_2} = \mb^\prime_{u_1 \cup u_2}$
\item $u \in \lao \implies \mb^\prime_u \subseteq \mb_u$
\item $u \in \lao \implies \mb^\prime_u \in \de_*$. 
\end{enumerate}
We write $(\lambda, \ba, T)$-moral in the case where $\Delta$ is all formulas of the language. 
\end{defn}

The following key theorem of \cite{MiSh:999} connects ``morality'' of $\de_*$ to goodness of $\de$ in the natural way. 

\begin{thm-lit}[``Separation of variables'', Malliaris and Shelah \cite{MiSh:999} Theorem 5.11] \label{t:separation}
Suppose that $\de$ is a regular ultrafilter on $I$ built from $(\de_0, \ba, \de_*)$. Then the following are equivalent: 
\begin{itemize}
\item[(A)] $\de_*$ is $(|I|, \ba, T)$-moral. 
\item[(B)] $\de$ is good for $T$. 
\end{itemize}
\end{thm-lit}

Theorem \ref{t:separation} helps with analyzing the intermediate classes in Keisler's order, as shown in  \cite{MiSh:999}.  
It also focuses the regular ultrafilter construction problems essential to Keisler's order on to the 
problem of constructing ultrafilters $\de_*$ on complete Boolean algebras, where one has a priori 
much more freedom and is not bound by regularity.  Much recent work has focused on building such $\de_*$. 
In the paper \cite{MiSh:1030}, which is foundational for the present argument, we built a powerful family of so-called  
\emph{optimal} ultrafilters over any suitable tuple of cardinals $(\lambda, \mu, \theta, \sigma)$, along with their simpler 
avatars the \emph{perfect} ultrafilters.   
In the present paper, we use $\theta = \sigma = \aleph_0$ so the criterion 
of ``suitable'' reduces to requiring that $\lambda > \mu \geq \aleph_0$.  Given the transparency of the theories involved, 
we have written the present proof to use only the more easily quotable definition of ``perfect.'' 

\begin{defn} \label{d:support}
Let $\overline{\mb} = \langle \mb_u : u \in [\lambda]^{<\aleph_0} \rangle$ be a sequence of elements of $\ba = \ba^1_{\alpha, \mu}$.  
We say $X$ is a support of $\overline{\mb}$ in $\ba$ when $X \subseteq \{ \mx_f : f \in \fin_{\mu}(\alpha) \}$ and 
for each $u \in [\lambda]^{<\aleph_0}$ there is a maximal antichain of $\ba$
consisting of elements of $X$ all of which are either $\leq \mb_u$ or $\leq 1 -\mb_u$. When a support $\supp(\bar{\mb})$ is given, 
write 
\[ \ba^+_{\supp(\bar{\mb}), \mu} \mbox{ to mean } \ba^+_{\alpha_*, \mu} \] 
where $\alpha_* \leq \alpha$ is minimal such that $\bigcup \{ \dom(f) : \mx_f \in \supp(\overline{\mb}) \} \subseteq \alpha_*$. 
\end{defn}

\begin{defn}[Perfect ultrafilters, \cite{MiSh:1030} Definition 9.15\footnote{
Corrected in arxiv v2 to say explicitly that any large enough $\alpha$ will work, matching \cite{MiSh:1030}. This had been the 
intention, so there is no change in the proof.}] 
\label{d:perfect} 
We say that an ultrafilter $\de_*$ on $\ba = \ba^1_{2^\lambda, \mu}$ 
is \emph{$(\lambda, \mu)$-perfect} when $(A)$ implies $(B)$:
\begin{enumerate}
\item[(A)] $\langle \mb_u : u \in [\lambda]^{<\aleph_0} \rangle$ is a monotonic sequence of elements of $\de_*$ 
\\ and  
$\supp(\bar{\mb})$ is a support for $\bar{\mb}$ of cardinality $\leq \lambda$, see $\ref{d:support}$, such that \\ for every $\alpha < 2^\lambda$ with 
$\bigcup \{ \dom(f) : \mx_f \in \supp(\overline{\mb}) \} \subseteq \alpha$, 
\\ there exists a multiplicative sequence 
\[ \langle \mb^\prime_u : u \in [\lambda]^{<\aleph_0} \rangle \]
of elements of $\ba^+$ 
such that
\begin{itemize} 
\item[(a)] $\mb^\prime_{u} \leq \mb_{u}$ for all $u \in [\lambda]^{<\aleph_0}$,  
\item[(b)] for every $\mc \in \ba^+_{\alpha, \mu} \cap \de_*$, 
no intersection of finitely many members of  
$\{ \mb^\prime_{\{i\}} \cup (1-\mb_{\{i\}}) : i < \lambda \}$ 
is disjoint to $\mc$. 
\end{itemize}
\item[(B)] there is a multiplicative sequence $\bar{\mb}^\prime = \langle \mb^\prime_u : u \in [\lambda]^{<\aleph_0} \rangle$ 
of elements of $\de_*$ which refines $\bar{\mb}$. 
\end{enumerate}
\end{defn}

%
%




\begin{defn} \label{d:perfected}
Suppose $\de$ built from $(\de_0, \ba, \de_*)$ where $\de_0$ is a regular filter on $I$, $|I| = \lambda$, 
$\ba = \ba^1_{2^\lambda, \mu}$ and $\de_*$ is $(\lambda, \mu)$-perfect.  In this case we say $\de$ 
is \emph{$(\lambda, \mu)$-perfected.}
\end{defn}

\begin{thm-lit}[\cite{MiSh:1030} Theorem 9.18 and Conclusion 9.20] \label{t:exists} 
For any infinite $\lambda > \mu$, there exists a regular, 
$(\lambda, \mu)$-perfect ultrafilter on $\ba^1_{2^\lambda, \mu}$. 
Moreover, there exists a $(\lambda, \mu)$-perfected, thus regular, ultrafilter on 
$\lambda$ which is not good for any non-simple theory, 
in fact, not $\mu^{++}$-good for any non-simple theory. 
\end{thm-lit}

\br

\section{The saturation condition}


In this section we prove that whenever $n,k, \lambda, \mu$ are such that the property $\pr_{n,k}(\lambda, \mu)$ from \ref{x2} above holds, 
then any $(\lambda, \mu)$ perfect(ed) ultrafilter will be able to handle the theory $T_{n,k}$. 

\br

\begin{theorem} \label{key-sat-1}
Suppse we are given $k<n$, $\mu < \lambda$, $\de$, and $T$, where: 
\begin{enumerate}
\item $\lambda, \mu, n, k$ are such that $\pr_{n,k}(\lambda, \mu)$ holds. 
\item $T = T_{n,k}$. 
\item $\de$ is a $(\lambda, \mu)$-perfected ultrafilter on $I$, $|I| = \lambda$.
\end{enumerate}
Then $\de$ is good for $T$, i.e. for any $M \models T$, $M^I/\de$ is $\lambda^+$-saturated.  
\end{theorem}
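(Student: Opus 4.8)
The plan is to combine the ``separation of variables'' theorem (Theorem \ref{t:separation}) with the perfectness of $\de_*$ (Definition \ref{d:perfect}), using the covering property $\pr_{n,k}(\lambda,\mu)$ to produce, for each relevant possibility pattern, a support-bounded multiplicative refinement. Since $\de$ is $(\lambda,\mu)$-perfected it is built from $(\de_0, \ba, \de_*)$ with $\ba = \ba^1_{2^\lambda,\mu}$ and $\de_*$ a $(\lambda,\mu)$-perfect ultrafilter; so by Theorem \ref{t:separation} it suffices to show $\de_*$ is $(\lambda, \ba, T)$-moral. As noted after Definition \ref{d:poss}, by \cite{mm1} Theorem 12 it is enough to treat $\vp$-types for a single formula $\vp$, and since $T = T_{n,k}$ has quantifier elimination (it is a model completion) the only instances of $\vp$ that can fail to have a trivial multiplicative refinement are (up to the usual reductions: finite conjunctions, $T^{eq}$, etc.) those coding a fragment of the ``generic'' type saying $R(x,\bar a_v)$ for various $k$-subsets. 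So fix a $(\lambda, \ba, T, \vp)$-possibility $\bar{\mb} = \langle \mb_u : u \in \lao\rangle$ with all $\mb_u \in \de_*$; we must find a multiplicative $\de_*$-refinement.

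The core step is to build, from the combinatorial data, a \emph{support} for $\bar{\mb}$ of size $\leq \lambda$ and a candidate multiplicative sequence $\langle \mb'_u \rangle$ witnessing clause (A) of Definition \ref{d:perfect}; then (B) hands us the desired refinement in $\de_*$, establishing morality. To produce the support I would use the structure of $\ba = \ba^1_{2^\lambda,\mu}$: each $\mb_u$ is (up to a small error, using normality/completeness) decided by a maximal antichain of generators $\mx_f$, and one chooses, coherently in $u$, such antichains indexed by finite partial functions $f$; the index set of all $f$ appearing has size $\leq \lambda$ because there are only $\lambda$ finite $u$'s and each $\mb_u$ needs only $\leq \mu \leq \lambda$ generators to be supported. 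Here is where $\pr_{n,k}(\lambda,\mu)$ enters: define a coloring $G : [\lambda]^{<\aleph_0} \to \mu$ by reading off, roughly, which ``color'' (= which value among $<\mu$ possible local configurations) the support of $\mb_u$ has; then the conclusion of $\pr_{n,k}$ — that within any monochromatic family of envelopes $\{u_v : v \in [w]^k\}$ one envelope $u_v$ already contains $w$ — translates into the statement that the only obstructions to multiplicativity of the naive refinement $\mb'_u := \bigcap_{i \in u}\mb'_{\{i\}}$ (built from generators) are ``already seen,'' so no genuinely new forbidden $R$-configuration (an $n$-clique among the $\bar a_v$) can force $\mb'_w = 0$ while each $\mb'_{v}$ is positive. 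Concretely, clause (b) of Definition \ref{d:perfect}(A) — that no finite intersection of the $\mb'_{\{i\}} \cup (1-\mb_{\{i\}})$ is disjoint from a given $\mc \in \ba^+_{\alpha,\mu}\cap\de_*$ — is verified by taking, for a putative disjointness witness $\mc$, the corresponding finite set $w$ of indices and applying $\pr_{n,k}(\lambda,\mu)$ to the envelopes attached to the $k$-subsets of $w$ to derive a contradiction with the possibility-pattern property (clause (3) of Definition \ref{d:poss}).

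The main obstacle I anticipate is precisely the bookkeeping in that last step: organizing the coloring $G$ so that monochromaticity of $\{u_v\}$ genuinely captures ``the same local Boolean/combinatorial configuration,'' and then matching the combinatorial conclusion ``$w \subseteq u_v$ for some $v$'' with the model-theoretic fact that $T_{n,k}$ forbids $R$-cliques on $n+1$ vertices but \emph{permits} them on $n$ vertices — i.e. showing that absorbing every forbidden configuration into a single envelope suffices to run the amalgamation/consistency argument inside a color class and hence that the refined sequence stays in $\de_*$. A secondary technical point is the reduction to a \emph{bounded} support (size $\leq \lambda$, not $2^\lambda$) and checking the ``for every large enough $\alpha$'' quantifier in Definition \ref{d:perfect}(A) is harmless here, which should follow since only $\leq\lambda$ generators are ever named. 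Once clause (A) is in place, clause (B) of perfectness and then Theorem \ref{t:separation} close the argument; the $\mu^{++}$-non-goodness half of Theorem \ref{t:exists} plays no role here, only the existence and perfectness.
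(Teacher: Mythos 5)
Your high-level strategy — reduce via Theorem \ref{t:separation} to morality of $\de_*$, then trigger clause (B) of Definition \ref{d:perfect} by building a support of size $\leq\lambda$ together with a multiplicative sequence $\bar{\mb}'$ satisfying clause (A), with $\pr_{n,k}(\lambda,\mu)$ supplying the combinatorial muscle — is the paper's strategy. But you have misallocated where $\pr_{n,k}$ actually does its work, and this is not a cosmetic slip: it is the heart of the argument. You claim that clause (A)(b) (no finite intersection of the $\mb'_{\{i\}}\cup(1-\mb_{\{i\}})$ is disjoint from a fixed $\mc\in\ba^+_{\alpha,\mu}\cap\de_*$) is verified by invoking $\pr_{n,k}$. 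In fact clause (b) is the \emph{easy} part: once $\bar{\mb}'$ is defined by codes living on index blocks disjoint from the supports of $\bar{\mb}$ and $\mc$, nonemptiness of the relevant intersection is immediate because the domains of the underlying partial functions do not collide. No combinatorics of set mappings is needed there.

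The genuinely hard step, and the one $\pr_{n,k}$ is for, is clause (A)(a): showing that the by-construction-multiplicative sequence $\mb'_s := \bigcap_{\beta\in s}\mb'_{\{\beta\}}$ actually \emph{refines} $\bar{\mb}$, i.e. $\mb'_s\leq\mb_s$ also for nonsingleton $s$. One assumes for contradiction that some $\mc_0\leq\mb'_{\mci}\setminus\mb_{\mci}$, drills down to a single index $t$ in the ultrapower decided correctly by a generator $\mx_{f_*}$, and observes that at $t$ the finite set of formulas $\{R(x,\bar a_{v_\beta}[t])^{\trv(\beta)}:\beta\in\mci\}$ is inconsistent. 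For $T_{n,k}$ this inconsistency has exactly two possible sources: a collision of parameters (two $k$-tuples projecting to the same set with opposite signs) or a near-forbidden $R$-configuration on $n$ vertices. The first is ruled out by the explicit coding built into $\bar{\mb}'$; the second is where the coloring $G$ from $\pr_{n,k}$ enters: monochromaticity of the ``envelopes'' $w_{s_\beta,\zeta}$ forces, by $\pr_{n,k}$, the offending $n$-set $w$ to lie entirely inside one envelope, contradicting the consistency of the ``floating type'' attached to that envelope. Your proposal gestures at this phenomenon (``absorbing every forbidden configuration into a single envelope'') but attributes it to (b) rather than (a), and describes the failure mode as $\mb'_w=0$ rather than $\mb'_w\not\leq\mb_w$, which are different things once $\mb'$ is multiplicative by fiat.

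Beyond the misallocation, your proposal is silent on the machinery that makes the argument go through: the finite closure operator $w_{s,f}$ and the collision-tracking $\rho_i$, the ``floating types'' $g_{s,\zeta}$ recording a candidate $R$-type over each envelope, the equivalence relation $E$ that makes $G$ and the floating-type data into a $\mu$-indexed template family, and the coding $f^*_{s,\zeta}$ onto fresh index blocks that forces coherence of these templates across the singletons. Both inconsistency cases in clause (a) rest on this infrastructure; without it, ``monochromaticity genuinely captures the same local Boolean/combinatorial configuration'' remains a wish. So the proposal is a correct sketch of the frame of the argument but not of its content, and the one concrete step it does assign to $\pr_{n,k}$ is the wrong one.
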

\setcounter{equation}{0}
\setcounter{thebigstep}{4}
\setcounter{thestep}{6}

\begin{proof}  
To begin let us fix several objects.
\begin{itemize}
\item The assumption on $\de$ means we may fix $\de_0$, $\ba = \ba^1_{2^\lambda, \mu}$, $\jj$ and a $(\lambda, \mu)$-perfect ultrafilter $\de_*$ on $\ba$ 
such that $\de$ is built from $(\de_0, \ba, \de_*)$ via $\jj$.
\item The fact that $\de$ is regular means we may choose any $M \models T_{n,k}$ as the index model. For convenience, suppose $|M| > \lambda$. 
\item Fix a lifting from $M^I/\de$ to $M^I$, so that for each $a \in M^I/\de$ and each index $t \in I$ the projection $a[t]$ is well defined.  
If $\bar{c}  = \langle c_i : i < m \rangle \in {^m(M^I/\de)}$ then we use $\bar{c}[t]$ to denote $\langle c_i[t] : i < m \rangle$. 
\item Fix a partial type $p = p(x)$ over $A \subseteq M^I/\de$, $|A| \leq \lambda$ which we will try to realize.  
Without loss of generality, $p$ is nonalgebraic 
and $M^I/\de \rstr A \prec M^I/\de$. 
Then, by our choice of theory, it suffices to consider $p \in \ts_\Delta(A)$, $|A| \leq \lambda$ where $\Delta = \{ R(x,x_1,\dots, x_k), \neg R(x,x_1,\dots, x_k) \}$. 
\end{itemize}
With these objects in hand let us proceed with the analysis. 
\begin{equation}
\label{eq-num1a}
\mbox{Let $\langle a_i : i < \lambda \rangle$ list the elements of $A$ without repetition.}
\end{equation}
\begin{equation}
\label{eq-ind2a}
\mbox{Let $\langle v_\beta : \beta < \lambda \rangle$  enumerate $[\lambda]^k$ without repetition.} 
\end{equation}
We will generally use $u,v,w \subseteq \lambda$ for sets of $i$'s and $s \subseteq \lambda$ for sets of $\beta$'s.  
When $w \in {^n \lambda}$ is a finite sequence or a finite set  
(which, for this purpose, we consider as a sequence, in increasing order) 
let $\bar{a}_w$ mean $\langle a_i : i \in w \rangle$. 
Then for some function $\trv: \lambda \rightarrow \{ 0, 1 \}$, (\ref{eq-ind2a}) induces an enumeration of $p$ as 
\begin{equation}
\label{enum-of-p}
 p = \langle R(x, \bar{a}_{v_\beta})^{\trv(\beta)} : \beta < \lambda \rangle. 
\end{equation}
Recall that here $\vp^0 = \neg \vp$, $\vp^1 = \vp$. 
For each $s \in \Omega := [\lambda]^{<\aleph_0}$, we will denote the 
set of indices for vertices appearing in  
$\{ R(x, \bar{a}_{v_\beta})^{\trv(\beta)} : \beta \in s \}$ as follows:
\begin{equation}
 \ver(s) = \bigcup \{ v_\beta : \beta \in s \}.
\end{equation}
In the other direction, let the index operator $\ndx$ accept a finite set of indices for elements of $A$ 
and return the relevant indices for formulas in the type:   
\begin{equation}
 \ndx(u) = \{ \beta < \lambda : v \in [u]^k \mbox{ and } v_\beta = v \}. 
\end{equation}
As we assumed the list (\ref{eq-ind2a}) was 
without repetition, $\ndx: [\lambda]^{<\aleph_0} \rightarrow [\lambda]^{<\aleph_0}$.

Now for each $s \in \Omega$, the \los map \L$: \Omega \rightarrow \de$ sends $s \mapsto B_s$ where 
\begin{equation} \label{by-the-los-map}
B_s := \{ t \in I : M \models (\exists x) \bigwedge_{\beta \in s} R(x, \bar{a}_{v_\beta}[t])^{\trv(\beta)} \}. 
\end{equation}
Define $\mb_s = \jj(B_s)$. We now have a possibility pattern for $T = T_{n,k}$, see Defn. \ref{d:poss}:  
\begin{equation}
\bar{\mb} = \langle \mb_s : s \in \Omega \rangle. 
\end{equation}
With this setup, the strategy for the remainder of the proof will be to construct a sequence $\langle \mb^\prime_s : s \in \Omega \rangle$ 
which, along with $\bar{\mb}$, satisfies the hypotheses of Definition \ref{d:perfect}(A). Then \ref{d:perfect}(B) will guarantee that 
$\bar{\mb}$ has a multiplicative refinement in $\ba$ and thus, by separation of variables,
that $\de$ is good for $T$.  
We will proceed as follows. First, we build an appropriate support for $\bar{\mb}$. Second, we use this data to define associated 
equivalence relations. Third, we define the sequence $\bar{\mb}^\prime$. It will be immediate from the definition that 
this sequence is multiplicative and refines $\bar{\mb}$ on singletons. Fourth, we show that the sequence $\bar{\mb}^\prime$ is not 
trivial, i.e. it satisfies \ref{d:perfect}(A)(b). Finally, we show that $\bar{\mb}^\prime$ is a refinement of $\bar{\mb}$, and thus 
satisfies \ref{d:perfect}(A)(a). 

\br
Our first task is to choose an appropriate support for $\bar{\mb}$ in the sense of \ref{d:support}. 
Following an idea from \cite{MiSh:1009}, whenever $i,j \in \lambda$ let 
\begin{equation}
\label{c-equal}
A_{a_i=a_j} := \{ t \in I : a_i[t] = a_j[t] \} \mbox{ and let } \ma_{a_i = a_j} := \jj(A_{a_i=a_j}). 
\end{equation}
For each $i < \lambda$ let $\eff_{\{i\}}$ be the set of all $f \in \fin_\mu(2^\lambda)$ such that for some $j \leq i$, both (\ref{c:a1}) and (\ref{c:a2}) hold: 
\begin{equation}
\label{c:a1}
\mx_f \leq  \ma_{a_i=a_j}.
\end{equation} 
\begin{equation}
\label{c:a2} \mbox{for all $k<j$, } ~\mx_f \cap \ma_{a_i=a_k} ~ = 0.
\end{equation} 
For each finite $u \subseteq \lambda$, define $\eff_u$ to be $\bigcap \{ \eff_{\{i\}} : i \in u \}$. 
Note that each $\eff_u$ is upward closed, i.e. $f \in \eff_u$ and $g \supseteq f$ implies $g \in \eff_u$. 
For each $s \in \Omega$, the benefit of working with elements of $\eff_{\ver(s)}$ 
will be that we may consider the partial function $i \mapsto \rho_{i}(f)$ on $\fin_\mu(2^\lambda)$ where 
\begin{equation}
\label{defn-of-rho-i}
   \rho_{i}(f) = \operatorname{min} \{ j \leq i : \mx_f \leq \ma_{a_i=a_j} \}. 
\end{equation}
The key point is that if $f \in \eff_{\ver(s)}$ and 
$i \in \ver(s)$ and 
$\rho_i(f) = j$, then for no $f^\prime \supseteq f$ does there exist $j^\prime < j$ such that $\mx_{f^\prime} \leq \ma_{a_i=a_{j^\prime}}$.  

As a result, for each choice of $s \in \Omega$ and 
$f \in \eff_{\ver(s)}$ we may naturally collect 
all the ``active'' indices by mapping 
\begin{equation} 
\label{defn:w}
(s,f) \mapsto w_{s,f} := \{ j \leq i : \mbox{ for some } i \in \ver(s), \mbox{we have } \rho_i(f) = j \} \cup \ver(s). 
\end{equation}
The map (\ref{defn:w}) is really like a finite closure operator: 
for each $s \in \Omega$ and $f \in \eff_{\ver(s)}$, we have that $\ver(s) \subseteq w_{s,\zeta} \in [\lambda]^{<\aleph_0}$, 
$f \in \eff_{w_{s,f}}$, and $w_{\ndx(w_{s,f}), f} = w_{s,f}$. 
Moreover, if $f \in \eff_{\ver(s)}$ then $f \in \eff_{w_{s,f}}$. Notice also that 
\begin{equation}
\label{eq:notice}
\mbox{ for any $s \in \Omega$ and any $\mc \in \ba^+$, there is $f \in \eff_{\ver(s)}$ with $\mx_f \leq \mc$. }
\end{equation} 
Why?   Recall that for $\ma, \mc \in \ba^+$, we say that $\mc$ supports $\ma$ when either $\mc \leq \ma$ or $\mc \leq 1 - \ma$.  
Without loss of generality, $\mc$ supports $\mb_s$.  
Since $\ver(s)$ is finite, it will suffice to prove that for a given $i \in \ver(s)$ we can find 
$f$ such that $\mx_f \leq \mc$ and $f \in \eff_{\{i\}}$. As the generators are dense in the completion, there is 
$f \in \fin_\mu(2^\lambda)$ with $\mx_f \leq \mc$, and (\ref{c:a1}) trivially holds of $f$ in the case $j=i$.   
If (\ref{c:a2}) does not hold in the case $j=i$, there are $i_{1} < i$ and 
$f_{1} \supseteq f$ such that (\ref{c:a1}) holds of $f_{1}$ in the case $j = i_{1}$. Since the ordinals are well ordered, 
after iterating this for finitely many steps we find $j=i_{k}$ and $f_{k} \supseteq \cdots \supseteq f_1 \supseteq f$ for which (\ref{c:a2}) 
also holds.  This proves (\ref{eq:notice}). 

\br

We need one more ingredient to construct the support: the partitions should decide not only equality but also the formulas $R$ on elements from $w_{s,\zeta}$. 
Towards this, 
for each $u \in [\lambda]^{k+1}$, write 
\begin{equation}
\label{eq:r2}
 \ma_{R(\bar{a}_u)} = \jj ( ~ \{ t \in I : M \models R(\bar{a}_u)  \} ).
\end{equation}
We may also say that $1 - \ma_{R(x,\bar{a}_v)} = \ma_{\neg R(x,\bar{a}_v)}$ and $1 - \ma_{R(\bar{a}_u)} = \ma_{\neg R(\bar{a}_u)}$, 
naturally defined. 
We may now state a definition. There is a component of support and a component of coherence across all $s \in \Omega$.  
\begin{equation} 
\label{d:good-support}
\mbox{ $\bar{f} = \langle~ \bar{f}_s = \langle (\fdp_{s, \zeta}, w_{s,\zeta}) : \zeta < \mu \rangle~ : s \in \Omega \rangle$ is a \emph{good support} for $\bar{\mb}$ when:} 
\end{equation}
\begin{enumerate}
\item[(1)] for each $s \in \Omega$,
\begin{enumerate}
\item[(a)] for each $\zeta < \mu$, $f = f_{s,\zeta} \in \eff_{\ver(s)}$. 
\item[(b)] for each $\zeta < \mu$, $w_{s,\zeta} = w_{s, f_{s,\zeta}}$, which is well defined by (a).   
\item[(c)] the sequence $\langle \mx_{f_{s,\zeta}} : \zeta < \mu \rangle$ 
is a maximal antichain of $\ba$ supporting each element of the set\footnote{By condition (1)(a), $\{ \ma_{a_i = a_j} : i, j \in w_{s,\zeta} \}$ are implicitly also here.} 
\[ \{ \mb_{s^\prime} : s^\prime \subseteq s \} 
\cup \{ \ma_{R(\bar{a}_u)} : u \in [w_{s,\zeta}]^{k+1} \}. \] 
\end{enumerate}
\item[(2)] for each $s, s^\prime \in \Omega$ with $s^\prime \subseteq s$, $\bar{f}_{s}$ refines $\bar{f}_{s^\prime}$. 
\item[(3)] for every finite $X \subseteq \bigcup \{ \dom(\fdp_{s,\zeta}) : s \in \Omega, \zeta < \mu \}$ 
and every $s \in \Omega$, there is $s_* \in \Omega$ such that $s \subseteq s_*$ and 
$\zeta < \mu \implies X \subseteq \dom(\fdp_{s_*, \zeta})$. 
\end{enumerate} 
One way of building a good partition is to miniaturize the argument from \cite{MiSh:1030}, as follows. 
First, we address (1)(a)+(c). For each $s \in \Omega$, we try to choose $f_{s,\zeta}$ by induction on $\zeta < \mu^+$ such that $0 \in \dom(f_{s,\zeta})$.  
Arriving to $\zeta$, suppose we have some remaining unallocated $\mc \in \ba^+$, i.e. a nonzero $\mc$ disjoint to $\bigcup \{ \mx_{f_{s,\gamma}} : \gamma < \zeta \}$. 
Without loss of generality, $\mc$ supports $\mb_s$.  By (\ref{eq:notice}), we may choose $f \in \eff_{\ver(s)}$ 
so that $\mx_f \leq \mc$. 
Condition (1)(c) asks that $\mx_f$ also support each element of a finite set, so without loss of generality 
(by taking intersections) we may assume (c) is satisfied. This completes the inductive step. 
As no antichain of $\ba$ has cardinality greater than $\mu$, the construction will stop at an ordinal $<\mu^+$, but 
as $0 \in \dom(f_{s,\zeta})$ for each $\zeta$ the ordinal is $\geq \mu$. Without loss of generality 
the sequence is indexed by $\mu$. 
Then (1)(b) holds by (\ref{defn:w}). 

To ensure conditions (2) and (3), we refine the partitions just obtained. Let $\langle s_\ell : \ell < \lambda \rangle$ list $\Omega$. 
We update $\overline{f}_{s_\ell} = \langle \fdp_{s_\ell,\zeta} : \zeta < \mu \rangle$ by induction on $\ell <\mu$ as follows. 
Arriving to $\ell$, if $(\exists k < \ell)(s_\ell \subseteq s_k)$ then let $k(\ell) = \min \{ k<\ell : s_\ell \subseteq s_k \}$ and let 
$\overline{f}_{s_\ell} = \overline{f}_{s_k}$. If there is no such $j$, we choose $\overline{f}_{s_\ell}$ such that it refines 
$\overline{f}_{s_k}$ (i.e. every $\fdp_{s_\ell,\zeta}$ extends $\fdp_{s_k,\zeta}$ for some $\zeta < \mu$) 
whenever $k < \ell$ and $s_k \subseteq s_\ell$. There are at most $2^{|s_\ell|} < \aleph_0$ such $j$ so this can be done.

At the end of this process, if necessary, we may re-index the partitions so that they are of order type $\mu$.   
By construction, for each $s \in \Omega$ and $\zeta < \mu$ the set $w_{s,\zeta}$ is well defined by (1)(b). This completes the construction 
of a good support for $\bar{\mb}$. 
\begin{equation}
\label{choice-of-f} 
\mbox{ For the remainder of the proof, we fix a good support $\bar{f}$ for $\bar{\mb}$. }
\end{equation}
\begin{equation} 
\label{defn-mcv}
\mbox{Fix $\mcv \subseteq 2^\lambda$, $|\mcv| \leq \lambda$ such that }\bigcup \{ \dom(f_{u,\zeta}) : u \in \Omega, \zeta < \mu \} \subseteq \mcv.
\end{equation}
Finally, for each $s \in \Omega$ and each $\zeta < \mu$, define\footnote{Informally, elements of $\gee_{s,\zeta}$ specify consistent $R$-types over the parameters with indices in $\ndx(w_{s,\zeta})$. Edges only hold on distinct tuples since $R$ is irreflexive. Given two tuples which ``collapse'' to the same values, either both or neither have an edge. 
The type extends $p \rstr s$ if possible, that is, if the \los map allows it.  
In the case where $\jj$ is the identity so the 
elements $\mx_{f_{s,\zeta}}$ are subsets of $I$, the reader may think of $g_{s,\zeta}$ as coding an $R$-type over $\{ a_i[t] : i \in w_{s,\zeta} \}$ 
which is consistent for any $t \in w_{s,\zeta}$. We will essentially arrive at this picture towards the end of the proof; we will find a set 
$C$ such that (among other things) $\jj(C) \subseteq \mx_{f_{s,\zeta}}$, choose $t \in C$ and consider the type given by $g_{s,\zeta}$ at $t$.}
\begin{equation}
\label{d:float}
\mbox{ 
$\gee_{s,\zeta}$ to be the set of functions $g = g_{s,\zeta}: \ndx(w_{s,\zeta}) \rightarrow \{ 0, 1 \}$ such that:} 
\end{equation}
\begin{enumerate}
\item[(a)] if $\mx_{f_{s,\zeta}} \leq \mb_s$, then for all $\beta \in s$, $g_{s,\zeta}(\beta) = \trv(\beta)$. 
\item[(b)] if $\gamma \in \ndx(w_{s,\zeta})$, $i \neq j \in v_\gamma$ and $\rho_i(f_{s,\zeta}) = \rho_j(f_{s,\zeta})$, then $g_{s,\zeta}(\gamma) = 0$. 
\item[(c)] if $\beta \neq \gamma \in \ndx(w_{s,\zeta})$ and 
$\{ \rho_i(f_{s,\zeta}) : i \in v_\gamma \} = \{ \rho_i(f_{s,\zeta}) : i \in v_\beta \}$ then $g_{s,\zeta}(\gamma) = g_{s,\zeta}(\beta)$. 
\item[(d)] if $w \in [w_{s,\zeta}]^n$ and $\mx_{f_{s,\zeta}} \leq \ma_{R(\bar{a}_u)}$
for each $u \in [w]^{k+1}$, then $g_{s,\zeta} \rstr \ndx(w)$ is not constantly 1.    
\end{enumerate}
Regarding condition (a), recall that by construction in (\ref{d:good-support})(1)(c) $\mx_{f_{s,\zeta}}$ decides $\mb_{s^\prime}$ for all $s^\prime \subseteq s$ and 
it likewise decides $\ma_{R(\bar{a}_u)}$ for each $u \in [w_{s,\zeta}]^{k+1}$. 
To see that $\gee_{s,\zeta} \neq \emptyset$ simply involves unwinding the definition. There are two cases. If $\mx_{f_{s,\zeta}} \leq 1-\mb_s$ then 
$\gee_{s,\zeta}$ contains the function which is constantly $0$. If $\mx_{f_{s,\zeta}} \leq \mb_s$, then 
recalling (\ref{eq:r2}) we have that if there is $u \in [w_{s,\zeta}]^{k+1}$ such that each $v \in [u]^k$ is $v_\beta$ for some 
$\beta \in s$, then $\mx_{f_{s,\zeta}} \leq 1 - \ma_{R(\bar{a}_u)}$. Thus, we may 
set $g(\gamma) = 1$ if and only if $\gamma \in \ndx(w_{s,\zeta})$ and 
$\{ \rho_i(f_{s,\zeta}) : i \in v_\gamma \} = \{ \rho_i(f_{s,\zeta}) : i \in v_\beta \}$ for some $\beta \in s$. 
In other words, since each $\rho_i$ is $s$-accurate, 
it is sufficient to give the behavior of $g$ on the set 
$\{ \rho_i(f_{s,\zeta}) : i \in w_{s,\zeta} \}$, as the condition of $s$-accurate and the definition (\ref{eq:r2}) 
ensure that if $u, u^\prime \in [w_{s,\zeta}]^{k+1}$ and 
$\{ \rho_i(f_{s,\zeta}) : i \in u \} = \{ \rho_i(f_{s,\zeta}) : i \in u^\prime \}$ then 
$\mx_{f_{s,\zeta}} \leq \ma_{R(\bar{a}_u)}$ if and only if $\mx_{f_{s,\zeta}} \leq \ma_{R(\bar{a}_{u^\prime})}$. 
So indeed $\gee_{s,\zeta} \neq \emptyset$. For the remainder of the proof, 
\begin{equation}
\mbox{ for each $s \in \Omega$ and $\zeta < \mu$, fix $g_{s,\zeta} \in \gee_{s,\zeta}$. }
\end{equation}
We will informally refer to these objects $g_{s,\zeta}$ as ``floating types.''




\br

Our second task is to organize the data already obtained in terms of a family of equivalence relations.  
This will elide some of the background noise and so give us a cleaner picture of any barriers to realizing the type.  
By hypothesis (1) of the Theorem, $\pr_{n,k}(\lambda, \mu)$ holds. Thus, identifying $\lambda$ with the set of indices for 
elements of $A$ as in (\ref{eq-num1a}), let us fix $G: [\lambda]^{<\aleph_0} \rightarrow \mu$ such that:   
\begin{quotation}
for each $w \in [\lambda]^n$ and each sequence $\langle u_v : v \in [w]^k \rangle$ of finite subsets of $\lambda$
such that $v \in [w]^k$ implies $v \subseteq u_v$   
and $G \rstr \{ u_v : v \in [w]^{k} \}$ is constant, there is $v \in [w]^{k}$ such that $w \subseteq u_v$. 
\end{quotation}
Let $E$ be the equivalence relation on $W = \Omega \times \mu \times \mu$ given by: 
\begin{equation}
\label{the-eq-reln}
E((s, \zeta, \xi),(s^\prime, \zeta^\prime, \xi^\prime)) \mbox{ if and only if } 
\end{equation}
\begin{enumerate}[label={(\alph*)}]
\item $\zeta = \zeta^\prime$ and $\xi = \xi^\prime$. 
\item $\otp(s) = \otp(s^\prime)$, $\otp(\ndx(w_{s,\zeta})) = \otp(\ndx(w_{s^\prime, \zeta}))$ and the order preserving map 
from $\ndx(w_{s,\zeta})$ onto $\ndx(w_{s^\prime, \zeta})$ takes $s$ to $s^\prime$. 
\item $\otp(\ver(s)) = \otp(\ver(s^\prime))$, $\otp(w_{s,\zeta}) = \otp(w_{s^\prime,\zeta})$ and the order preserving map 
from $w_{s,\zeta}$ onto $w_{s^\prime, \zeta}$ takes $\ver(s)$ to $\ver(s^\prime)$. 
\item $\otp(\dom(f_{s,\zeta})) = \otp(\dom(f_{s^\prime, \zeta}))$. 
\item if $\gamma_s \in \dom(f_{s,\zeta})$, $\gamma_{s^\prime} \in \dom(f_{s^\prime, \zeta})$ and 
$\otp(\gamma_s \cap \dom(f_{s,\zeta})) = \otp(\gamma_{s^\prime} \cap \dom(f_{s^\prime, \zeta}))$ then $f_{s,\zeta}(\gamma_s) = f_{s^\prime, \zeta}(\gamma_{s^\prime})$. 
\item $\langle g_{s,\zeta}(\beta) : \beta \in \ndx(w_{s,\zeta}) \rangle = \langle g_{s,\zeta}(\beta) : \beta \in \ndx(w_{s,\zeta}) \rangle$. 
\item $G(w_{s,\zeta}) = G(w_{s^\prime, \zeta}) = \xi$. 
\end{enumerate}
Since the sets and ordinals in question are all finite, but $\zeta < \mu$ may vary, it is easy to see that 
there are precisely $\mu$ equivalence classes of $E$. Choose an enumeration of these classes as 
\begin{equation}
\bar{W} = \langle W_{\epsilon} : \epsilon < \mu \rangle, \mbox{ so $W = \bigcup_\epsilon W_\epsilon$}. 
\end{equation}
Fix a representative function 
\begin{equation} \label{the-function-h}
h: \mu \rightarrow W \mbox{ such that } h(\epsilon) \in W_\epsilon. 
\end{equation}
In the rest of the proof, we will often denote the values of $\zeta, \xi$ at $h(\epsilon)$ by
$\zeta_{h(\epsilon)}, \xi_{h(\epsilon)}$ respectively. 
The next definition will be central. 
For each $\beta < \lambda$, $\epsilon < \mu$ let us collect all elements of $\Omega$ which occur as part of an $\epsilon$-template tuple 
$(s,\zeta, \xi)$ where $\beta \in s$ and $\mx_{\fdp_{s,\zeta}} \leq \mb_{s}$: 
\begin{equation} 
\label{uu-eqn}
\uu_{\beta,\epsilon} = \{ s  \colon (s, \zeta_{h(\epsilon)}, \xi_{h(\epsilon)}) \in W_\epsilon, ~ \beta \in s, ~\mx_{\fdp_{s, \zeta_{h(\epsilon)}}} \leq \mb_{s} \}. 
\end{equation} 
A useful property of these sets is the following: for each $\epsilon < \mu$, 
\begin{equation} 
\label{constant-color}
\mbox{if $s \in \uu_{\beta, \epsilon}$ and $s^\prime \in \uu_{\beta^\prime, \epsilon}$ 
then $G(w_{s,\zeta_{h(\epsilon)}}) = G(w_{s^\prime, \zeta_{h(\epsilon)}}) = \xi_{h(\epsilon)}$. }
\end{equation}
This completes our construction of the equivalence relations. We now have the necessary scaffolding for the third task. 
\br

Our third task is to define the sequence $\bar{\mb}^\prime$. Recalling $\mcv$ from (\ref{defn-mcv}), fix $\alpha < 2^\lambda$ 
so that $\mcv \subseteq \alpha$. Without loss of generality, $\alpha \geq \lambda$. 
We now copy the functions $f_{s,\zeta}$ onto a new domain where new partitions will allow us to code 
additional information.\footnote{Compare the usual construction of good ultrafilters.} Let $\code_m$ denote some fixed one-to-one $m$-fold coding function from $\lambda^m$ to $\lambda$. 
Let $\tv$ denote the truth value of an expression (either $0$ or $1$). 
\begin{equation}
\label{defining-f-star}
\mbox{For each $s \in \Omega$, $\zeta < \mu$ define $f^* = f^*_{s,\zeta}$ as follows.}
\end{equation} 
\begin{enumerate}
\item $\dom(f^*) \subseteq \alpha \cdot 2 + \lambda \cdot 5$ is finite, $\rn(f^*) \subseteq \mu$, and $f^*$ is determined by the remaining conditions. 
\br
\item if $\gamma \in \dom(\fdp_{s,\zeta})$ then 
\[ f^*(\alpha + \gamma) = \code_2(\fdp_{s,\zeta}(\gamma), \otp(\gamma \cap \dom(\fdp_{s,\zeta}))). \]
\item if $\gamma = \langle i, j \rangle \in \rn(\code_2(w_{s,\zeta} \times w_{s,\zeta}))$, then 
\[ f^*(\alpha\cdot 2 + \gamma) = \tv(\rho_i(f_{s,\zeta}) = \rho_j(f_{s,\zeta})). \]
\item if $\gamma = \langle i_1, \dots, i_k \rangle \in \rn(\code_k(w_{s,\zeta} \times \cdots \times w_{s,\zeta}))$, 
then 
\[ f^*(\alpha\cdot 2 + \lambda + \gamma) = \tv( \mx_{f_{s,\zeta}} \leq \mb_s ). \]
\item if $\gamma = \langle i_0, \dots, i_k \rangle \in \rn(\code_{k+1}(w_{s,\zeta} \times \cdots \times w_{s,\zeta}))$, 
then 
\[ f^*(\alpha\cdot 2 + \lambda\cdot 2 + \gamma) = \tv( \mx_{f_{s,\zeta}} \leq \ma_{R(\bar{a}_{\langle i_0, \dots, i_k \rangle})} ). \]
\item if $\gamma \in w_{s,\zeta}$, then 
\[ f^*(\alpha\cdot 2 + \lambda\cdot 3 + \gamma) = \code_3(\tv(\gamma \in \ver(s)), \otp(\gamma \cap \ver(s)), \otp(\gamma \cap w_{s,\zeta})). \]
\item if $\gamma \in \ndx(w_{s,\zeta})$, then 
\[ f^*(\alpha\cdot 2 + \lambda\cdot 4 + \gamma) = \code_4(\tv(\gamma \in s), \otp(\gamma \cap s), \otp(\gamma \cap \ndx(w_{s,\zeta})), g_{s,\zeta}(\gamma)). \]
\end{enumerate} 
\noindent This completes the definition (\ref{defining-f-star}). 
Of course, this definition could be made more efficient and the domain smaller (say, by more judicious use of $\code_m$). 
Finally, 
\begin{equation}
\mbox{let $\bar{\mc} = \langle \mc_\epsilon : \epsilon < \mu \rangle$ be given by $\mc_{\epsilon} = \mx_{\{(\alpha + \alpha + \lambda \cdot 5, \epsilon)\}} \}$.}
\end{equation}
This new antichain will help us to divide the work in the next definition. Notice that any of its elements 
will have nonzero intersection with any of the elements from $\ba^+_{\alpha + \alpha + \lambda \cdot 5}$. 

We have all the ingredients to define $\bar{\mb}^\prime$.
For each $\beta < \lambda$, let
\begin{align} \label{mb-prime-b}
\mb^\prime_{\{\beta\}} = \left(  \bigcup \{ \mc_\epsilon \cap \mx_{f^*_{s, \zeta_{h(\epsilon)}}} \cap
 \mx_{\fdp_{s, \zeta_{h(\epsilon)}}}  ~:~  \epsilon <\mu, ~ s \in \uu_{\beta, \epsilon} \}  \right) \cap \mb_{\{\beta\}}. 
\end{align}
Let us justify that (\ref{mb-prime-b}) is not zero: for each $\epsilon < \mu$ such that 
$\uu_{\beta,\epsilon} \neq \emptyset$, and for each $s \in \uu_{\beta, \epsilon}$,
\[  \mc_\epsilon \cap \mx_{f^*_{s, \zeta_{h(\epsilon)}}} \cap
 \mx_{\fdp_{s, \zeta_{h(\epsilon)}}} \cap \mb_{\{\beta\}} ~ > ~ 0. \]
This is because domains of the functions corresponding to 
$\mx_{\fdp_{s, \zeta_{h(\epsilon)}}}$, $\mc_\epsilon$ and $\mx_{f^*_{s, \zeta_{h(\epsilon)}}}$ are 
mutually disjoint, and adding $\mb_{\{\beta\}}$ is allowed by the definition of $\uu_{\beta,\epsilon}$. 
(Recall that by monotonicity, $\beta \in s$ implies $\mb_s \leq \mb_{\{\beta\}}$.)
For each $s \in \Omega \setminus \emptyset$, define
\begin{align} \label{mb-prime-s}
\mb^\prime_s = \bigcap \{ \mb^\prime_{\{ \beta \}} : \beta \in s \}.  
\end{align}
Let $\mb^\prime_{\emptyset} = 1_{\ba}$. This completes the definition of 
the sequence $\bar{\mb}^\prime$:
\begin{equation}
\label{defn-of-b-prime}
\bar{\mb}^\prime = \langle \mb^\prime_u : u \in \Omega \rangle. 
\end{equation}
By construction, $\bar{\mb}^\prime$ is multiplicative, and $\mb^\prime_s \leq \mb_s$ when $|s| = 1$. 


\br
\br

Our fourth task is to prove that the sequence $\bar{\mb}^\prime$ defined in (\ref{defn-of-b-prime}) satisfies Definition 
\ref{d:perfect}(A)(b) along with $\bar{\mb}$ and the choice of support $\bar{f}$ determined earlier in the proof (i.e. $\alpha_*$ 
of Definition \ref{d:support} may be taken to be the $\alpha$ of the present proof). Compare this to the  
Step 8 Claim of \cite{MiSh:1030} 6.2.

As the generators are dense in the completion, it will suffice to show that for any $f \in \fin_\mu(\alpha)$,  
any finite $\mci \subseteq \lambda$, and any $\ma \in \de_*$ such that $\supp(\ma) \subseteq \alpha$, 
\begin{equation} 
\label{will-suffice-1}
\ma \cap \bigcap \{ \mb^\prime_{\{\beta\}} \cup (1-\mb_{\{\beta\}}) : \beta \in \mci \} > 0. 
\end{equation}
Taking intersections if necessary, we may write $\mci$ as the disjoint union of $\mcin$ and $\mciy$ where for each $\beta \in \mcin$, 
$\ma \leq 1-\mb_{\{\beta\}}$ and for each $\beta \in \mciy$,  
$\ma \leq \mb_{\{\beta\}}$. Recalling that $\mb^\prime_s \leq \mb_s$ when $|s| = 1$, we suppose that $\mciy$ is nonempty 
(otherwise we are done) and it will suffice to show that 
\begin{equation}
\label{it-is-nonzero}
\ma \cap \bigcap \{ \mb^\prime_{\{\beta\}} : \beta \in \mciy \} > 0. 
\end{equation}
As $\mb_{\mciy} \in \de_*$, without loss of generality $\ma \leq \mb_{\mciy}$ and we can find $f \in \fin_\mu(\alpha)$ 
such that $\mx_f \leq \ma$. 
Recall $\mcv$ from (\ref{defn-mcv}).
Write $f$ as the disjoint union $f^{\operatorname{in}} \cup f^{\operatorname{out}}$  
where $\dom(f^{\operatorname{in}}) \subseteq \mcv$ and $\dom(f^{\operatorname{out}}) \subseteq \alpha \setminus \mcv$. 
Necessarily $\mb_{\mciy} \cap \mx_{f^{\operatorname{in}}} > 0$. 
As $\bar{f}_{\mciy}$ gives rise to a partition, 
let $\zeta_* < \mu$ be such that 
\begin{equation}
\label{e:17}
\mx_{\fdp_{\mciy, \zeta_*}} \cap \mx_{f^{\operatorname{in}}} \cap \mb_{\mciy} > 0. 
\end{equation}
Recall the function $G$ which was given as a witness to $\pr$. 
Let $\xi_* = G(w_{\mciy, \zeta_*})$ and let 
$\epsilon < \mu$ be such that $(\mciy, \zeta_*, \xi_*) = (\mciy, \zeta_{h(\epsilon)}, \xi_{h(\epsilon)}) \in W_\epsilon$.   
Going forward, we will write $\zeta_{h(\epsilon)}$ instead of $\zeta_*$ for clarity. As we have $\mx_f \leq \mb_{\mciy}$, 
it follows from the definition (\ref{uu-eqn}) that 
\begin{equation} 
\mciy \in \uu_{\beta,\epsilon} \mbox{ for each } \beta \in \mciy. 
\end{equation}
Now let us verify that   
\begin{equation} \label{e8}
0 < \mx_{f^{\operatorname{out}}} \cap \mx_{f^{\operatorname{in}}} \cap \left( \mc_\epsilon \cap \mx_{f^*_{\mciy, \zeta_{h(\epsilon)}}} \cap
 \mx_{\fdp_{\mciy, \zeta_{h(\epsilon)}}} \right) \cap \mb_{\mciy}.
\end{equation}
The reason is that conflicts can only arise when the domains of the relevant functions intersect. 
By construction, 
\[ \mc_\epsilon, ~\mx_{f^{\operatorname{out}}}, ~\mx_{f^*_{\mciy, \zeta_{h(\epsilon)}}}, ~\mx_{\fdp_{\mciy, \zeta_{h(\epsilon)}}} \]
do not interfere with each other and the first three do not interfere with $\mx_{f^{\operatorname{in}}}$ or with $\mb_{\mciy}$. 
By (\ref{e:17}) $\mx_{\fdp_{\mciy, \zeta_{h(\epsilon)}}} \cap \mx_{f^{\operatorname{in}}} \cap \mb_{\mciy}$ is nonzero.  Replacing $\mx_f = \mx_{f^{\operatorname{in}}} \cap \mx_{f^{\operatorname{out}}}$ 
and quoting the definition of $\mb^\prime_{\mciy}$ in (\ref{mb-prime-b}) and (\ref{mb-prime-s}), we are done. 
This completes the proof of (\ref{will-suffice-1}). 
\br

\br 


\vspace{5mm}
To complete the proof of Theorem \ref{key-sat-1}, it remains to show that for each $s \in \Omega$, $\mb^\prime_s \leq \mb_s$. This will suffice for 
\ref{d:perfect}(A)(a). The background template for our argument is \cite{MiSh:1030} Claim 6.2, Step 10, item (5). 
Before beginning this proof, note that by our definition of the sequence $\bar{\mb}^\prime$, whenever   
$0 < \mc \leq \mc_\epsilon \cap \mb^\prime_{\{\beta\}}$, necessarily 
\begin{align}\label{eb:61}
\bigcup \{ \mc \cap \mx_{f^*_{s,\zeta_{h(\epsilon)}}} \cap \mx_{\fdp_{s, \zeta_{h(\epsilon)}}} :  s \in \uu_{\beta,\epsilon} \} > 0.  
\end{align} 
In particular, under this hypothesis, there is $s \in \uu_{\beta,\epsilon}$ such that 
\begin{equation}
\label{eb:62}
\mc \cap \mx_{f^*_{s,\zeta_{h(\epsilon)}}} \cap \mx_{\fdp_{s, \zeta_{h(\epsilon)}}} > 0 ~~\mbox{\hspace{3mm} thus \hspace{3mm}}~~ 
\mc \cap \mx_{\fdp_{s, \zeta_{h(\epsilon)}}} \cap \mb^\prime_{\{\beta\}} > 0. 
\end{equation}

Now suppose for a contradiction that $\bar{\mb^\prime}$ is {not} a multiplicative refinement of $\bar{\mb}$. 
Then for some finite $\mci \subseteq \lambda$ and some $\mc_0 \in \ba^+$,  
\begin{equation} \label{for-a-contradiction-1}
\mc_0~ \leq ~ \mb^\prime_{\mci} \setminus \mb_{\mci}~ = ~ \bigcap_{\beta \in \mci} \mb^\prime_\beta \setminus \mb_{\mci}. 
\end{equation} 
Without loss of generality, $\mc_0 \leq \mc_\epsilon$ for some $\epsilon < \mu$ and $\mc_0 = \mx_f$ for some $f \in \fin_\mu(2^\lambda)$. 
Enumerate $\mci$ as $\langle \beta_i : i < |\mci| \rangle$.  
Working in $\ba$, by induction on $i < |\mci|$ 
\begin{equation}
\label{f-star-by-ind}
\mbox{ we choose functions $f_{i}$ and sets $s_{\beta_i}$ such that: }
\end{equation}
\begin{enumerate}
\item[(i)] $f_{i} \in \fin_\mu(2^\lambda)$
\item[(ii)] $j < i$ implies $f_{j} \subseteq f_{i}$
\item[(iii)] $s_{\beta_i} \in \uu_{\beta_i, \epsilon}$
\item[(iv)] $ f_{{i}} \supseteq f_{s_{\beta_i}, \zeta_{h(\epsilon)}} \cup f^*_{s_\beta, \zeta_{h(\epsilon)}}.$ 
\end{enumerate}
\noindent Let $f_{-1} = f$. Suppose we have defined $f_j$ for $-1 \leq j < j+1 = i$, and we define 
$f_i$ and $s_{\beta_i}$ as follows. By hypothesis,  
\begin{equation}
\label{ind-x} \mx_{f_j} \leq \mb^\prime_{\mci} \cap \mc_\epsilon. 
\end{equation}
First note that by (\ref{ind-x}) and monotonicity of $\bar{\mb}^\prime$, 
\begin{equation} 
\label{eq54}
\mx_{f_j} \leq \mb^\prime_{\{\beta_i\}} \cap \mc_\epsilon.
\end{equation} 
Second, by (\ref{for-a-contradiction-1}), $\mc_0 \leq \mc_\epsilon \cap  \mb^\prime_{ \{ \beta_i \}}$.
Thus by (\ref{eb:61}),  $\uu_{\beta_i, \epsilon} \neq \emptyset$.  
Apply (\ref{eb:62}) to choose $s_{\beta_i} \in \uu_{\beta_i, \epsilon}$ such that
\[ \mx_{f_j} \cap \mx_{\fdp_{s_{\beta_i}, \zeta_{h(\epsilon)}}} \cap \mx_{f^*_{s_{\beta_i}, \zeta_{h(\epsilon)}}} > 0. \]
Combining this equation with (\ref{eq54}), 
\[ \mx_{f_j} \cap \mc_\epsilon \cap \mb^\prime_{\{\beta_i\}} \cap \mx_{\fdp_{s_{\beta_i}, \zeta_{h(\epsilon)}}} 
\cap \mx_{f^*_{s_{\beta_i}, \zeta_{h(\epsilon)}}} > 0. \]
Let $f_{i} = f_j \cup \fdp_{s_{\beta_i}, \zeta_{h(\epsilon)}} \cup f^*_{s_{\beta_i}, \zeta_{h(\epsilon)}}$. 
This completes the induction. 

\br
\begin{equation}
\mbox{ Let $f_* := \bigcup_{i<|\mci|} f_i$ and let $\langle s_{\beta_i} : i < |\mci| \rangle = \langle s_{\beta} : \beta < \beta_* \rangle$ be as given by this proof. }
\end{equation}
Note that by construction, 
\begin{equation}
\label{smaller}
\mbox{ for each $\beta \in \mci$}, ~\mx_{f_*} \leq \mx_{f_{s_\beta, \zeta_{h(\epsilon)}}}. 
\end{equation}
Consider the set of indices for `active' elements:
\begin{equation}
W = \bigcup \{ w_{s_\beta, \zeta_{h(\epsilon)}} : \beta \in \mci \}. 
\end{equation}
To finish the argument, 
we will move back to the index model. Informally, the point will be that $\mx_{f_*}$ holds open a `space' in the Boolean algebra 
which reflects a particular configuration at some index $t \in I$ (a configuration which we will show cannot happen). 
First, we shall be careful to choose an appropriate $t$, as follows. 
Since the theory $T_{n,k}$ is $\aleph_0$-categorical, let $\Gamma = \Gamma(W)$ be the finite set of formulas
in the variables $\{ x_i : i \in W \}$. For $v \subseteq W$, let $\vp(\bar{x}_v)$ denote that the free variables of $\vp$ 
are among $\langle x_i : i \in v \rangle$, and as above let $\bar{a}_v$ denote $\langle a_i : i \in v \rangle$. 
For each $\vp = \vp(\bar{x}_v) \in \Gamma$, the \los map gives 
\[ C_{\vp(\bar{a}_v)} := \{ t \in I : M \models \vp(\bar{a}_v[t]) \} \mbox{ and let } \mc_{\vp(\bar{a}_v)} = \jj(C_{\vp(\bar{a}_v)}). \]
$\Gamma$ is finite, so we may assume, without loss of generality (by increasing $f_*$ if necessary), 
that $\mx_{f_*}$ supports (decides) each of the finitely many $\mc_{\vp(\bar{a}_v)}$.  More precisely, 
we may assume $\Gamma$ admits a partition into disjoint sets $\Gamma_0 \cup \Gamma_1$ where 
\[ \vp(\bar{x}_v) \in \Gamma_{0} \mbox{ if and only if } \tv\left( \mx_{f_*} \leq \mc_{\vp(\bar{a}_v)} \right) = 0. \]  
The ``accurate'' subset of $I$ is the one defined by 
\[ C := \bigcap \{ C_{\vp(\bar{a}_v)} : \vp(\bar{x}_v) \in \Gamma_1 \} \cap \bigcap \{ I \setminus C_{\vp(\bar{a}_v)} : \vp(\bar{x}_v) \in \Gamma_0 \}~ \subseteq I. \]
Since $\jj(C) \geq \mx_{f_*} > 0$, necessarily $C$ is nonempty. 
\begin{equation}
\mbox{ Fix some $t \in C$ (so $t \in I$) for the remainder of the proof. }
\end{equation}
Now consider the picture in the model $M$ given by index $t$. The set of elements $\{ a_i[t] : i \in W \}$ accurately reflects the picture given by 
$\mx_{f_*}$ in the following ways. First, if $j \leq i \in W$, then $M \models a_i[t] = a_j[t]$ if and only if $\rho_i(f_*) = \rho_j(f_*)$. 
Second, for all $u \in [W]^{k+1}$, $M \models R(\bar{a}_u[t])$ if and only if $\mx_{f_*} \leq \ma_{R(\bar{a}_u)}$ in the sense of (\ref{eq:r2}). 
Moreover, for each $\beta \in \mci$, $\mx_{f_*} \leq \ma_{R(\bar{a}_u)}$ if and only if $\mx_{f_{s_\beta, \zeta_{h(\epsilon)}}} \leq \ma_{R(\bar{a}_u)}$. 

At the given index $t$, the ``floating types'' of (\ref{d:float}) have become actual partial types, which we now name. For each $\beta \in \mci$, let
\begin{equation}
 r_\beta(x) := \{ R(x,\bar{a}_{v_\gamma})^{g_{s_\beta, \zeta_{h(\epsilon)}}(\beta)} : \gamma \in \ndx(w_{s_\beta, \zeta_{h(\epsilon)}}) \}. 
\end{equation}
Condition \ref{d:float}(b) ensures that each $r_\beta(x)$ is a complete, consistent $R$-type over $\{ a_i[t] : i \in w_{s_\beta, \zeta_{h(\epsilon)}} \}$. 
Condition \ref{d:float}(a) ensures that $R(x,\bar{a}_{v_\beta})^{\trv(\beta)} \in r_\beta(x)$, because $\beta \in s_\beta \in \uu_{\beta, \epsilon}$. 
However, $\bigcup \{ r_\beta(x) : \beta \in \mci \}$ is \emph{not} a consistent partial type. This is because something even stronger is true: 
\begin{equation}
\label{d:inc-a}
\{ R(x,\bar{a}_{v_\beta}[t])^{\trv(\beta)} : \beta \in \mci \} \mbox{ is not a consistent partial type.} 
\end{equation}
Why? $\mx_{f_*} \cap \mb_{\mci} = 0$, 
so the formula $(\exists x) \bigwedge \{ R(x,\bar{a}_{v_\beta}[t])^{\trv(\beta)} : \beta \in \mci \}$ belongs to $\Gamma_0$. 

\br
As we are working in $T_{n,k}$, the inconsistency of (\ref{d:inc-a}) can come from one of two sources (collisions or edges), which we rule out in turn. 


\br
The first possible problem is collision of parameters, i.e. perhaps there are $\beta \neq \gamma \in \mci$ such 
that $\trv(\beta) \neq \trv(\gamma)$ but $\{ a_i[t] : i \in v_\beta \} = \{ a_j[t] : j \in v_\gamma \}$. 
By condition (\ref{f-star-by-ind})(iv) in the inductive construction of $f_*$, we know that 
for each $\beta \in \mci$, $f_*$ extends an element of $\bar{f}_{s_\beta}$. Thus, for each $i \in w_{s_\beta, \zeta_{h(\epsilon)}}$, 
the `minimum collision' functions $\rho_i(f_*)$ from (\ref{defn-of-rho-i}) are well defined. 
Translating,  
\[ \{ a_{\rho_i(f_*)}[t] : i \in v_\beta \} = \{ a_i[t] : i \in v_\beta \} = \{ a_j[t] : j \in v_\gamma \} = \{ a_{\rho_j(f_*)}[t] : j \in v_\gamma \}. \]
Moreover, since the functions $\rho_i$ were constructed to give a (definitive) minimal witness, we have that 
\[ \{ \rho_i(f_*) : i \in v_\beta \} = \{ \rho_j(f_*) : j \in v_\gamma \} \in [W]^k. \] 
Call this set $v$. Let $\delta < \lambda$ be such that $v_\delta = v$ in the enumeration from (\ref{eq-ind2a}).  Recalling the 
definition of the $w_{s,\zeta}$ in (\ref{defn:w}), necessarily $v_\delta \in [w_{s_\beta, \zeta_{h(\epsilon)}}]^k$ and 
$v_\delta \in [w_{s_\gamma, \zeta_{h(\epsilon)}}]^k$, or in other words,   
\[  \delta \in \ndx( w_{s_\beta, \zeta_{h(\epsilon)}} ) \cap \ndx(w_{s_\gamma, \zeta_{h(\epsilon)}}). \]
By definition of $g_{s,\zeta}$ in (\ref{d:float})(c), the collision in each case ensures that 
\begin{equation}
\label{first-x} g_{s_\beta, \zeta_{h(\epsilon)}}(\beta) = g_{s_\beta, \zeta_{h(\epsilon)}}(\delta) \mbox{ and likewise } 
g_{s_\gamma, \zeta_{h(\epsilon)}}(\delta) = g_{s_\gamma, \zeta_{h(\epsilon)}}(\gamma). 
\end{equation}
Recall that we had chosen $s_\beta \in \uu_{\beta, \epsilon}$ and $s_\gamma \in \uu_{\gamma, \epsilon}$ in (\ref{f-star-by-ind})(iii), 
so condition (\ref{d:float})(a) gives that 
\begin{equation}
\label{on-one-hand-x}
 g_{s_\beta, \zeta_{h(\epsilon)}}(\beta) = \trv(\beta) \mbox{ and likewise } \trv(\gamma) = g_{s_\gamma, \zeta_{h(\epsilon)}}(\gamma).
\end{equation}
However, condition (\ref{f-star-by-ind})(iv) in the inductive construction of $f$ ensures that for each $\beta \in \mci$, 
$f \supseteq f^*_{s_\beta, \zeta_{h(\epsilon)}}$. Since $\epsilon$ is fixed, by condition (\ref{defining-f-star})(7), 
\begin{equation}
\label{second-x}
g_{s_\beta, \zeta_{h(\epsilon)}}(\delta) = g_{s_\gamma, \zeta_{h(\epsilon)}}(\delta). 
\end{equation}
By (\ref{first-x}), (\ref{second-x}), and transitivity of equality, 
\begin{equation}
\label{on-the-other-hand-x}
g_{s_\beta, \zeta_{h(\epsilon)}}(\beta) =  g_{s_\beta, \zeta_{h(\epsilon)}}(\gamma). 
\end{equation}
In the presence of our hypothesis that $\trv(\beta) \neq \trv(\gamma)$, equations 
(\ref{on-one-hand-x}) and (\ref{on-the-other-hand-x}) give an obvious contradiction. This contradiction shows that 
collision of parameters cannot be responsible for the inconsistency of the partial type. 

\br
The second possible problem is a background instance (or instances) of $R$ on the parameters, i.e. perhaps there is $w \in [W]^{n}$ 
such that for all $u \in [w]^{k+1}$, $M \models R(\bar{a}_u[t])$, and for each $v \in [w]^k$, there is 
$\beta = \beta(v) \in \mci$ such that $\trv(\beta) = 1$ and $\{ a_i[t] : i \in v_{\beta} \} = \{ a_j[t] : j \in v \}$.  
 
Recall our property $\pr$ for the function $G$ with range $\mu$ (fixed just before defining the equivalence relation $E$ earlier in the proof)
guarantees that: ``for any  $w \in [\lambda]^n$ 
and any $\langle u_v : v \in [w]^k \rangle$ such that $v \in [w]^k$ implies $v \subseteq u_v \in [\lambda]^{<\aleph_0}$,  
\emph{if} $G \rstr \langle u_v : v \in [w]^k \rangle$ is constant, 
\emph{then} for some $v \in [w]^k$ we have that $w \subseteq u_v$.''  
Apply this in the case where $u_v = w_{s_{\beta(v)}, \zeta_{h(\epsilon)}}$.  Because $\epsilon$ is fixed, the value 
$\xi = \xi_{h(\epsilon)}$ of $G$ on these sets is constant. 
Thus, there is some $\beta_* \in \mci$ such that $w \subseteq w_{\beta_*, \zeta}$.  
In other words, the relevant near-complete hypergraph is already contained in the base set of one of our consistent partial types. 

Now the argument is similar to that of the ``collision'' problem treated above. 
Fix for awhile $v \in [w]^k$ and $\beta = \beta(v)$. Let $\delta < \lambda$ be such that $v = v_\delta$. 
Again, for each $i \in w_{s_\beta, \zeta_{h(\epsilon)}}$ the functions $\rho_i(f_*)$ are well defined and entail that 
\[ \{ \rho_i(f_*) : i \in v_{\beta} \} = \{ \rho_j(f_*) : j \in v \}. \]
Thus, by (\ref{d:float})(c), 
\[ g_{s_{\beta}, \zeta_{h(\epsilon)}}(\beta) = g_{s_{\beta}, \zeta_{h(\epsilon)}}(\delta). \] 
Since $w \subseteq w_{s_{\beta_*}, \zeta_{h(\epsilon)}}$ and $v \in [w]^k$, we have also that $\delta \in \dom(g_{s_{\beta_*}, \zeta_{h(\epsilon)}})$. 
Again by (\ref{f-star-by-ind})(iv) and (\ref{defining-f-star})(7), we have that 
\[ g_{s_{\beta_*}, \zeta_{h(\epsilon)}}(\delta) = g_{s_{\beta}, \zeta_{h(\epsilon)}}(\delta) = \trv(\beta) = 1. \] 
As $v \in [w]^k$ was arbitrary, this shows that $r_{\beta_*}(x)$ includes $\{ R(x,\bar{a}_v) : v \in [w]^k \}$.  
In light of our assumption that $M \models R(\bar{a}_u[t])$ for all $u \in [w]^{k+1}$, this contradicts $r_{\beta_*}$ being a consistent partial type.  
This shows that an occurrence of $R$ on the parameters cannot be responsible for inconsistency of the partial type. 

We have ruled out the only two possible causes of inconsistency for (\ref{d:inc-a}). 
This contradiction proves that the situation of (\ref{for-a-contradiction-1}) never arises. This completes the proof that $\bar{\mb}^\prime$ 
is a multiplicative refinement of $\bar{\mb}$. 

This completes the proof of Theorem \ref{key-sat-1}.  
\end{proof}

\begin{concl} \label{c-1}
Suppose that for some ordinal $\alpha$ and integers $\ell$, $k$,
\begin{enumerate}
\item $\ell < k$
\item $T = T_{k+1,k}$ 
\item $\mu = \aleph_\alpha$, $\lambda = \aleph_{\alpha + \ell}$
\end{enumerate}
Then there is a regular $(\lambda, \mu)$-perfect ultrafilter on $\lambda$ which is good for $T$ but not  
$\mu^{++}$-good for any non-low or non-simple theory. 
\end{concl}

\begin{proof} 
Theorem \ref{t:exists} gives a $ (\lambda, \mu)$-perfected ultrafilter which is not $\mu^{++}$-good for 
any non-simple or non-low theory. For the saturation condition, 
Lemma \ref{z4} proves that $\pr_{n,k}(\lambda, \mu)$ holds for these cardinals 
so the hypotheses of Theorem \ref{key-sat-1} are satisfied. 
\end{proof}

\br
\br

\section{The non-saturation condition}

In this section we prove the complementary result to Theorem \ref{key-sat-1}, by connecting 
non-saturation of $T_{k+1, k}$ to existence of large free sets in set mappings.

\begin{claim} \label{f2a}
Suppose that:  
\begin{enumerate}
\item for some ordinal $\alpha$ and integers $2 \leq k < \ell$, $\mu = \aleph_\alpha$, $\lambda = \aleph_{\alpha + \ell}$, 
\\ or just: $(\lambda, k, \mu^+) \rightarrow k+1$
\item $\ba = \ba^1_{2^\lambda, \mu}$ 
\item $\de_*$ is an ultrafilter on $\ba$
\item $T = T_{k+1,k}$ 
\end{enumerate}
Then $\de_*$ is not $(\lambda, T)$-moral.
\end{claim}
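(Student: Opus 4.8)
The plan is to show $\de_*$ is not $(\lambda,\ba,T)$-moral (i.e.\ not $(\lambda,T)$-moral) by producing a single $(\lambda,\ba,T,\bar\vp)$-possibility $\overline{\mb}=\langle\mb_u:u\in\lao\rangle$, for a suitable sequence $\bar\vp$ of instances of $R$, all of whose terms lie in $\de_*$ but which admits no multiplicative $\de_*$-refinement; by Definition \ref{d:moral} this suffices. The combinatorial input is Lemma \ref{f2ax}. By hypothesis (1) (directly, or via Theorem \ref{t:yes} in the cardinal-arithmetic case) we have $(\lambda,k,\mu^+)\rightarrow k+1$; applying Lemma \ref{f2ax} with $n=k+1$ and with its cardinal parameter taken to be $\mu^+$, fix a model $M\models T_{k+1,k}$, a sequence $\langle b_\alpha:\alpha<\lambda\rangle$ of distinct elements of $M$, and the family $\bad\subseteq[\lambda]^{k+1}$ defined there (which, since $n=k+1$, is just the set of $w\in[\lambda]^{k+1}$ carrying a full $R$-edge on $\bar b_w$), so that for every strong set mapping $F:[\lambda]^k\rightarrow[\lambda]^{<\mu^+}$ there is $w\in\bad$ with $w\not\subseteq F(v)$ for all $v\in[w]^k$. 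Fix an enumeration $\langle v_\beta:\beta<\lambda\rangle$ of $[\lambda]^k$ without repetition, and let each $\vp_\beta(x;\bar y)$ be the formula $R(x,\bar y)$ (passing to $T^{eq}$ so parameters may be taken singletons, as in the proof of Theorem \ref{key-sat-1}). For $w\in[\lambda]^{k+1}$ and $u\in\lao$ write $w\sqsubseteq u$ to mean that every $k$-element subset of $w$ equals $v_\beta$ for some $\beta\in u$.

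Now I build $\overline{\mb}$. Fix an injection $w\mapsto\eta_w$ of $\bad$ into $2^\lambda$. For each $w\in\bad$ the restriction of $\de_*$ to the subalgebra of $\ba=\ba^1_{2^\lambda,\mu}$ consisting of elements supported on the single coordinate $\eta_w$ (a subalgebra isomorphic to $\mathcal{P}(\mu)$) is an ultrafilter, so fix $\mz_w\in\de_*$ with $0\neq\mz_w\neq 1_\ba$ supported on $\{\eta_w\}$, and set
\[ \mb_u=\bigcap\{\mz_w:w\in\bad,\ w\sqsubseteq u\}\qquad(u\in\lao), \]
so that $\mb_u=1_\ba$ whenever $|u|\leq k$. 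As this is a finite intersection of members of $\de_*$ over pairwise distinct single coordinates, each $\mb_u\in\de_*\cap\ba^+$, and $u\mapsto\mb_u$ is clearly monotone. The real work is the possibility condition \ref{d:poss}(3). Let $u_*\in\lao$ and $\bc\in\ba^+$ satisfy its hypotheses; then $\bc$ decides $\mb_u$ for all $u\subseteq u_*$, and for $w\in\bad$ with $w\sqsubseteq u_*$, taking $u$ to be the set of indices of the $k$-subsets of $w$ gives $\mb_u=\mz_w$ (as a $(k+1)$-set is determined by its $k$-subsets), so $\bc$ decides $\mz_w$. Put $B'=\{w\in\bad:w\sqsubseteq u_*,\ \bc\leq 1-\mz_w\}$; then for $u\subseteq u_*$ one has $\bc\leq\mb_u$ iff no $w\in B'$ satisfies $w\sqsubseteq u$. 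Let $H$ be the $(k+1)$-uniform hypergraph with vertex set $\{d_\alpha:\alpha\in\bigcup_{\beta\in u_*}v_\beta\}$ and edge set $\{\{d_\alpha:\alpha\in w\}:w\in B'\}$. Since $B'\subseteq\bad$ and $\bad$ — being realized inside $M\models T_{k+1,k}$ — contains no $k+2$ vertices all of whose $(k+1)$-subsets are edges, $H$ has no forbidden configuration and embeds into some $M'\models T_{k+1,k}$. Letting $a_\beta$ (for $\beta\in u_*$) code $\langle d_\alpha:\alpha\in v_\beta\rangle$, one checks that $M'\models(\exists x)\bigwedge_{\beta\in u}R(x,a_\beta)$ iff adjoining a new vertex $x$ joined to these tuples creates no forbidden configuration, iff no $w\in B'$ has $w\sqsubseteq u$, iff $\bc\leq\mb_u$. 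Hence \ref{d:poss}(3) holds and $\overline{\mb}$ is a genuine possibility.

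For the non-refinement, suppose toward a contradiction that $\overline{\mb}'=\langle\mb'_u:u\in\lao\rangle$ is a multiplicative $\de_*$-refinement. Since $\ba^1_{2^\lambda,\mu}$ is $\mu^+$-c.c., every nonzero element is decided by a maximal antichain of generators of size $\leq\mu$; so fix, for each $\beta<\lambda$, a support $S_\beta\subseteq 2^\lambda$ of $\mb'_{\{\beta\}}$ with $|S_\beta|\leq\mu$. Fix $w\in\bad$ and let $u$ be the set of indices of its $k$-subsets. By multiplicativity $\mb'_u=\bigcap_{\beta\in u}\mb'_{\{\beta\}}\in\de_*$, so $\mb'_u\neq 0$, whereas $\mb'_u\leq\mb_u\leq\mz_w$. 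Now $1-\mz_w$ is nonzero and supported on $\{\eta_w\}$, so any nonzero element supported on a set avoiding $\eta_w$ meets $1-\mz_w$ and is not $\leq\mz_w$; as $\mb'_u$ is supported on $\bigcup_{\beta\in u}S_\beta$, we get $\eta_w\in S_\beta$ for some $\beta\in u$. Fix such a $\beta(w)$, noting $v_{\beta(w)}\in[w]^k$, and define $F:[\lambda]^k\rightarrow[\lambda]^{<\mu^+}$ by
\[ F(v_\beta)=v_\beta\cup\bigcup\{w\in\bad:\beta(w)=\beta\}. \]
Since $w\mapsto\eta_w$ is injective and $\{\eta_w:\beta(w)=\beta\}\subseteq S_\beta$ has size $\leq\mu$, the right-hand side is a union of at most $\mu$ sets of size $k+1$, so $|F(v_\beta)|\leq\mu$ and $F$ is a strong set mapping. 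By Lemma \ref{f2ax} there is $w^*\in\bad$ with $w^*\not\subseteq F(v)$ for every $v\in[w^*]^k$; but $v_{\beta(w^*)}\in[w^*]^k$ and $w^*\subseteq F(v_{\beta(w^*)})$ by construction, a contradiction. Hence $\overline{\mb}$ has no multiplicative $\de_*$-refinement, so $\de_*$ is not $(\lambda,T)$-moral.

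I expect the main obstacle to be the verification of the possibility condition \ref{d:poss}(3): one must manufacture, uniformly in the finite data $(u_*,\bc)$, a model of $T_{k+1,k}$ together with parameters realizing exactly the intersection pattern coded by $\bc$, and what makes this feasible is the precise shape of the forbidden configurations of $T_{k+1,k}$ (complete hypergraphs on $k+2$ vertices) together with the fact that $\bad$, and hence every subset of it, carries no such configuration. A secondary but essential point is the bookkeeping with supports in $\ba^1_{2^\lambda,\mu}$ — the $\mu^+$-chain condition, the fact that supports of finite meets lie in unions of supports, and the ``independence'' of a coordinate $\eta_w$ from any element supported away from it.
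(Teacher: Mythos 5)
Your proof is correct and follows essentially the same route as the paper's. The paper fixes, for each $w\in\bad$, an ordinal $\alpha_w<2^\lambda$ (your $\eta_w$) and a generator $\mx_{g_w}$ supported on $\{\alpha_w\}$ with $\mx_{g_w}\notin\de_*$, and sets $\mb_s = 1_\ba - \bigcup\{\mx_{g_w} : w\in\bad, [w]^k\subseteq\{v_\beta:\beta\in s\}\}$; this is exactly your $\mb_u=\bigcap\{\mz_w : w\sqsubseteq u\}$ with $\mz_w = 1-\mx_{g_w}$. The possibility-pattern verification and the contradiction via a strong set mapping built from the supports $S_\beta$ (the paper's Subclaim 5.2) are the same argument in the same order.
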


\begin{rmk}
Note that there is no mention of optimality or perfection of the ultrafilter. 
The only factor is the distance of $\lambda$ and $\mu$ as reflected in the Boolean algebra $\ba$ 
$($or what amounts to the size of a maximal antichain at the ``transfer point'' in Theorem \ref{t:separation}$)$. 
\end{rmk}

\begin{proof} 

Our strategy will be to build a sequence $\bar{\mb}$ of elements of $\ba^+$ and prove that it is a possibility pattern for $T$ but does not have a multiplicative refinement.  
We continue with much of the notation and terminology of the previous section. 

By Theorem \ref{t:yes} above (and monotonicity), for $k \leq m = \ell - 1$, $(\aleph_{\alpha + m + 1}, k, \aleph_{\alpha+1}) \rightarrow k+1$, 
so we can apply Claim \ref{f2ax} to $(\lambda, k, \mu^+)$. [Notice that $\mu^+$ here replaces $\mu$ there.]
Thus, we may fix a model $M$ of $T_{k+1,k}$ 
with $\lambda$ distinguished elements ${\bar{b} = \langle b_\alpha : \alpha < \lambda \rangle}$ with the following property. 
Let 
\[ \bad = \{ w \in [\lambda]^{k+1} : M \models R(\overline{b}_v)  \} \] 
noting that by choice of $T$, $\bad \subsetneq [\lambda]^{k+1}$. The property is that 
whenever $F: [\lambda]^{k} \rightarrow [\lambda]^{\leq\mu}$ is a strong set mapping,  
for some $w \in \bad$ we have 
\[ (\forall v \in [w]^{k})(w \not\subseteq F(v)). \]

Without loss of generality we may extend $M$ to be $\lambda^+$-saturated. 
For the remainder of the proof, fix a choice of ordinals 
$\langle \alpha_w : w \in \bad \rangle$ with no repetitions, where each $\alpha_w < 2^\lambda$. 
Choose also for each $w \in \bad$ a corresponding function $g_w \in \fin_{\mu}(\alpha_*)$ such that $\dom(g_w) = \{ \alpha_w \}$ and $\mx_{g_w} = \emptyset \mod \de_*$. 

\br

\noindent\emph{Overview in a special case.}
Before giving the construction in the generality of the Boolean algebra $\ba$, we describe for the reader the picture in the special case where 
we consider an ultrapower $N = M^I/\de$ where $\de$ is built from a regular filter $\de_0$ and $\ba$ is identified with some independent family $\eff \subseteq {^I\mu}$
of cardinality $2^\lambda$.  
What we would like to do is choose a set $A$ of size $\lambda$ in the ultrapower which is an empty graph in $N$, i.e. for all $u \in [A]^{k+1}$, 
$N \models \neg R(\bar{a}_u)$. As a result, the type $p(x) = \{ R(x,\bar{a}_{v}) : v \in [A]^{k+1} \}$ will be a consistent partial type in $N$.  
However, by judicious choice of the parameter set $A$, we will be able to show that $p$ cannot be realized. To do this we need to ensure that edges appear 
on the projections of $A$ to the index models, but not too many and not too often. 

We begin with the idea that for each $i < \lambda$, $a_i$ is the equivalence class in $N$ of the sequence which is constantly equal to $b_i$. We then 
essentially doctor this sequence by winnowing $\mcp$, i.e. erasing some of the edges. 
Formally, of course, at each index $t$ we choose a sequence $\langle b^\prime_i[t] : i < \lambda \rangle$ of distinct elements of $M$ 
(using the fact that $M$ is universal for models of $T$ of size $\leq \lambda$) such that for all $w \subseteq \lambda$, 
if $M \models R(\bar{b}^\prime_w)$ then $M \models R(\bar{b}_w)$, but not necessarily the inverse. We will 
then set $a_i = \langle b^\prime_i[t] : t \in I \rangle/\de$ for each $i < \lambda$. How to winnow edges?   
Following the notation of the proof of \ref{key-sat-1}, fix an enumeration of $[\lambda]^k$ as $\langle v_\beta : \beta < \lambda \rangle$ without repetition, 
so the eventual type will be enumerated by $\{ R(x,\bar{a}_{v_\beta}) : \beta < \lambda \}$. Let $\Omega = [\lambda]^{<\aleph_0}$. 
For each $s \in \Omega$, let the `critical set' $\crs(s)$ be the set of $w \in \mcp$ such that each $v \in [w]^k$ is $v_\beta$ for some $\beta \in w$.  
(Note that this is generally weaker than saying that $w \subseteq \ver(s)$.)
The rule is that for each $t \in I$, and each $w \in \mcp$, we leave an edge on $\{ b^\prime_i : i \in w \}$ 
if and only if $t \in \mx_{g_w}$.  By the choice of $g_w$, no edge will persist in the ultrapower, so $\langle a_i : i < \lambda \rangle$ is an empty graph in $N$.  
It remains to prove the type is not realized.  
Before giving this argument, we carry out the construction just described in the generality of 
the Boolean algebra. (The type just described easily converts to a possibility pattern using the \los map as in (\ref{by-the-los-map}) p. \pageref{by-the-los-map},   
so we may conclude this argument using the more general proof.) 

\br

\noindent\emph{General proof.}
Let $\langle v_\alpha : \alpha < \lambda \rangle$ list $[\lambda]^k$ without repetition. 
For $s \subseteq \lambda$, let $\ver(s) = \bigcup \{ v_\beta : \beta \in s \} \in [\lambda]^{<\aleph_0}$ collect the indices 
for all relevant vertices. 
Let $\Omega = [\lambda]^{<\aleph_0}$. 
For each $s \in \Omega$, let 
\[ \mb_s = 1_\ba - \bigcup \{ \mx_{g_w} : w \in \bad \mbox{ and } [w]^k \subseteq \{ v_\beta : \beta \in s \} \}. \]
Essentially, we omit the formal representative of any bad configuration once our type fragment $s$ includes indices for all of 
the edges (in the type) connecting to it.  

Let us show that $\langle \mb_s : s \in \Omega \rangle$ is a possibility pattern for $T_{k+1,k}$. 
Fix for awhile $s \in \Omega$ and $\mc \in \ba^+$. 
Decreasing $\mc$ if necessary, we may assume that for any $w \in \bad \cap [\ver(s)]^{k+1}$ 
either $\mc \leq \mx_{g_w}$ or $\mc \leq 1-\mx_{g_w}$. It follows that for any $s^\prime \subseteq s$ either 
$\mc \leq \mb_{s^\prime}$ or $\mb \leq 1-\mb_{s^\prime}$. 

To satisfy Definition \ref{d:poss}, we now need to choose parameters $b^\prime_i \in M$ for $i \in \ver(s)$ such that: 
$\bar{b}^\prime = \langle b^\prime_i : i \in \ver(s) \rangle$ is without repetition and for any $s^\prime \subseteq s$, 
\[  M \models (\exists x)\bigwedge_{\beta \in s^\prime} \vp(x;\bar{b}^\prime_{v_\beta}) ~~ \mbox{ ~~iff~~ } ~~ \mc \leq \mb_{s^\prime}.  \]
We can do this by choosing our parameters so that for any $i_0, \dots, i_{k-1} \in \ver(s)$ we have 
$\langle b^\prime_{i_\ell} : \ell < k \rangle \in R^M$ if and only if: $| \{ i_\ell : \ell < k \} | = k$ [i.e. they are distinct] and  $\{ i_\ell : \ell < k \} \in \mcp$  
and $\mc \leq \mx_{g_{\{i_\ell : \ell < k \}}}$. Note that there \emph{is} such a sequence of parameters in the monster model 
(forgetting edges on the $\bar{b}$ as described above) so it suffices to show such a sequence works. 
If $\mc \leq \mb_{s^\prime}$, then by definition of $\mb_{s^\prime}$, there is no $w \in \bad$ such that 
$[w]^k \subseteq \{ v_\beta : \beta \in s^\prime \}$ and $\mc \leq \mx_{g_w}$. So there are never 
enough edges on the parameters to produce an inconsistency in the set  
\[ \{ R(x;\bar{b}^\prime_{v_\beta}) : \beta \in s^\prime \}. \]
If $\mc \cap \mb_{s^\prime} = 0_\ba$, then because $\mc \in \ba \setminus \{ 0_\ba \}$, it must be that $\mb_{s^\prime} \neq 1_\ba$.  
By definition of the sequence $\bar{\mb}$, there is $w \in \bad$ with $[w]^k \subseteq \{ v_\beta : \beta \in s^\prime \}$ and (since 
$\mc$ decides all relevant edges) $\mc \leq \mx_{g_w}$. Then $M \models R(\bar{b}^\prime_w)$.  Recalling that 
\[ \{ R(x;\bar{b}^\prime_{v_\beta}) : \beta \in s^\prime \} \supseteq \{ R(x;\bar{b}^\prime_{v}) : v \in [w]^k \} \]
the left hand side cannot be consistent. This completes the proof that $\bar{\mb}$ is a possibility pattern.

\br

\noindent \emph{No multiplicative refinement.}
Now let us assume for a contradiction that $\langle \mb^\prime_s : s \in \Omega \rangle$ is a multiplicative refinement of the possibility pattern just described. 
That is, $s_1, s_2 \in \Omega$ implies $\mb^\prime_{s_1} \cap \mb^\prime_{s_2} = \mb^\prime_{s_1 \cap s_2}$ and for each $s \in \Omega$, $\mb^\prime_{s} \leq \ma_{s}$. 
As each $\mb^\prime_{\{\beta\}} \in \ba^+$, we may write 
$\mb^\prime_{\{\beta\}} = \bigcup \{ \mx_{h_{\beta, i}} : i < i(\beta) \leq \mu \}$ where $\langle h_{\beta,i} : i < i(\beta) \rangle$ is a set of pairwise 
inconsistent functions from $\fin_{\mu}(2^\lambda)$. Let $S_\beta = \bigcup \{ \dom(h_{\beta,i}) : i < i(\beta) \}$, so 
$S_\beta \subseteq 2^\lambda$ has cardinality $\leq \mu$. 

First, we show that for each $w \in \bad$ the domain of $g_w$ is detected by the supports of at least one of the the $k$-element subsets of $w$. 

\begin{subclaim} \label{subclaim-m}
If $w \in \bad \subseteq [\lambda]^{k+1}$ then $\alpha_{w} \in \bigcup \{ S_\beta : v_\beta \in [w]^k \}$. 
\end{subclaim}

\begin{proof} 
Let $x = \{ \beta : v_\beta \in [w]^k \} \in [\lambda]^{\binom{m}{k}}$. Since $\overline{\mb}^\prime$ is multiplicative, 
\[ \mb^\prime_x = \bigcap \{ \mb^\prime_\beta : \beta \in x \} \]
Let $f \in \fin_{\mu}(2^\lambda)$ be such that $\mx_{f} \leq \mb^\prime_x$. Then $\mx_f \leq \mb^\prime_{\{\beta\}}$ for each $\beta \in x$. 
Letting $g = f \rstr \bigcup \{ S_\beta : v_\beta \in [w]^k \} = \bigcup \{ S_\beta : \beta \in x \}$, we have that 
$v_\beta \in [w]^k \implies \mx_{g} \leq \mb^\prime_{\{\beta\}}$. This implies that $\mx_{g} \leq \mb^\prime_x \leq \mb_x$ 
because $\bar{\mb}^\prime$ refines $\bar{\mb}$. 
By definition, 
\[ \mb_x = 1_\ba - \bigcup \{ \mx_{g_u} : u \in \bad \mbox{ and } [u]^k \subseteq \{ v_\beta : \beta \in x \} \}. \]
So as $[w]^k \subseteq \{ v_\beta : \beta \in x \}$, necessarily $\mx_{g} \cap \mx_{g_{w}} = 0_\ba$. 
Since our Boolean algebra $\ba$ was generated freely, it must be that $\dom(g_{w}) \cap \dom(g) \neq \emptyset$, 
but $\dom(g_{w}) = \{ \alpha_{w} \}$. This shows that $\alpha_{w} \in \bigcup \{ S_\beta: v_\beta \in [w]^k \}$ as desired. 

\noindent\emph{This proves Subclaim \ref{subclaim-m}}. \hfill \end{proof}

We resume our proof by contradiction. 
Define a strong set mapping $ F: [\lambda]^k \rightarrow [\lambda]^{\leq \mu} $ 
by: if $v \in [\lambda]^k$ let $\beta$ be such that $v = v_\beta$, and let 
\[ F(v) = \bigcup \{ w \in [\lambda]^m : w \in \bad \mbox{ and } \alpha_w \in S_\beta \}. \]
Then  $F(v)$ is well defined, $F(v) \subseteq \lambda$, and $|F(v)| \leq \mu$ for $v \in [\lambda]^k$. 
(Recall that $\langle \alpha_w : w \in \bad \rangle$ is without repetition.) 
Now for all $w \in \bad \subseteq [\lambda]^{k+1}$, there is $v = v_\beta \in [w]^k$ such that $\alpha_w \in S_\beta$. 
{Thus} $w \subseteq F(v)$. We have proved that for all $w \in \bad$,  
\[ (\exists v \in [w]^k) ( w \subseteq F(v) ). \]
This is a contradiction, so the possibility pattern $\overline{\mb}$ does not have a solution. 
Thus, $\de_*$ cannot be moral for $T_{k+1,k}$. This completes the proof of Claim \ref{f2a}. 
\end{proof}

\begin{concl} \label{c-2}
Suppose that for some ordinal $\alpha$ and integers $\ell$, $k$,
\begin{enumerate}
\item $2 \leq k < \ell$
\item $T = T_{k+1,k}$ 
\item $\mu = \aleph_\alpha$, $\lambda = \aleph_{\alpha + \ell}$
\item $\ba = \ba^1_{2^\lambda, \mu}$ 
\item $\de_*$ is \emph{any} ultrafilter on $\ba$
\item $\de_1$ is \emph{any} regular ultrafilter on $\lambda$ built from $(\de_0, \ba, \de_*)$
\end{enumerate}
Then $\de_1$ is not good for $T$. In particular, if $\de_1$ is a $(\lambda, \mu)$-perfected ultrafilter on $\lambda$,  
then 
$\de_1$ is not good for $T$. 
\end{concl}

\begin{proof} 
By Claim \ref{f2a} and Theorem \ref{t:separation}.  Note that if we allow $\ell = k = 1$, $T_{k+1, k}$ is not simple so we can likewise 
avoid saturation of $T$. 
\end{proof} 

\br

\section{Infinitely many classes} \label{s:infinite}

We emphasize that all results in this section are in ZFC. 

\begin{theorem} \label{t:p2a}
Suppose $\mu = \aleph_\alpha$ and $\lambda = \aleph_{\alpha + \ell}$ for $\alpha$ an ordinal and $\ell$ a nonzero integer. 
Let $\de$ be a $(\lambda, \mu)$-perfected ultrafilter on $\lambda$. Then for any $2 \leq k < \omega$: 
\begin{enumerate}
\item[(a)] If $k < \ell$, then $\de$-ultrapowers of models of $T_{k+1, k}$ are not $\lambda^+$-saturated. 

\item[(b)] If $\ell < k$, then $\de$-ultrapowers of models of $T_{k+1, k}$ are $\lambda^+$-saturated. 
\end{enumerate} 
\end{theorem}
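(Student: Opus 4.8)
The plan is to derive both clauses directly from the two main conclusions already established in \S\S4--5, namely Conclusion \ref{c-1} and Conclusion \ref{c-2}, after translating the cardinal hypotheses into the combinatorial arrow relations those conclusions require. For clause (b), assume $\ell < k$; since $\mu = \aleph_\alpha$ and $\lambda = \aleph_{\alpha+\ell}$ we have exactly the hypotheses of Conclusion \ref{c-1} with the roles of $\ell$ and $k$ as stated there (the condition $\ell < k$ is literally hypothesis (1) of \ref{c-1}), so \ref{c-1} produces a $(\lambda,\mu)$-perfect ultrafilter good for $T = T_{k+1,k}$. But we are handed a specific $(\lambda,\mu)$-perfected ultrafilter $\de$; the point to check is that \emph{every} $(\lambda,\mu)$-perfected ultrafilter is good for $T$, which is exactly the content of Theorem \ref{key-sat-1} (its hypotheses are: $k<n$, $\mu<\lambda$, $\pr_{n,k}(\lambda,\mu)$ holds, $T = T_{n,k}$, and $\de$ is $(\lambda,\mu)$-perfected). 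So the real work of (b) is just to verify $\pr_{k+1,k}(\lambda,\mu)$ for these cardinals, and this is Lemma \ref{z4}: with $n = k+1$, $\theta = \aleph_0$ (so $\mu = \mu^{<\theta} = \cf(\mu)$ is automatic as $\mu$ is a successor or $\aleph_0$... more precisely we invoke the statement for $\lambda \in [\mu, \mu^{+(k-1)}]$, and $\aleph_{\alpha+\ell}$ with $\ell \le k-1$ lies in that interval). Hence $\de$ is good for $T_{k+1,k}$, which is the assertion of (b).

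For clause (a), assume $k < \ell$. Now the hypotheses of Conclusion \ref{c-2} are met: $2 \le k < \ell$, $T = T_{k+1,k}$, $\mu = \aleph_\alpha$, $\lambda = \aleph_{\alpha+\ell}$, $\ba = \ba^1_{2^\lambda,\mu}$, and since $\de$ is $(\lambda,\mu)$-perfected it is by definition (\ref{d:perfected}) built from some $(\de_0,\ba,\de_*)$ with $\de_0$ regular and $\de_*$ a $(\lambda,\mu)$-perfect ultrafilter on $\ba$ --- in particular $\de$ is a regular ultrafilter on $\lambda$ built from $(\de_0,\ba,\de_*)$, matching hypotheses (5),(6) of \ref{c-2}. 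Therefore Conclusion \ref{c-2} applies and gives that $\de$ is not good for $T_{k+1,k}$, i.e.\ there is some $M \models T_{k+1,k}$ with $M^\lambda/\de$ not $\lambda^+$-saturated. Since $\de$ is regular and the signature is countable, by Keisler's transfer (\cite{keisler} Corollary 2.1a, cited after Theorem \ref{t:separation}) this holds for \emph{every} $M \models T_{k+1,k}$, which is clause (a). Concretely, Conclusion \ref{c-2} routes through Claim \ref{f2a} (showing $\de_*$ is not $(\lambda,T)$-moral, using Claim \ref{f2ax}/\ref{c10} and Theorem \ref{t:yes} to get the needed free-set arrow $(\lambda,k,\mu^+)\to k+1$) and Theorem \ref{t:separation} (``separation of variables'', converting non-morality of $\de_*$ into non-goodness of $\de$).

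\textbf{Main obstacle.} There is no substantive obstacle --- the theorem is essentially a bookkeeping corollary assembling Lemma \ref{z4} $+$ Theorem \ref{key-sat-1} (for (b)) and Claim \ref{f2a} $+$ Theorem \ref{t:separation} (for (a)). The one point requiring a moment's care is checking the arithmetic side conditions: that $\lambda = \aleph_{\alpha+\ell}$ with $\ell \le k-1$ really lies in the interval $[\mu,\mu^{+(k-1)}] = [\aleph_\alpha, \aleph_{\alpha+k-1}]$ needed for Lemma \ref{z4} in clause (b), and symmetrically that $k < \ell$ gives $(\lambda,k,\mu^+) = (\aleph_{\alpha+\ell}, k, \aleph_{\alpha+1}) \to k+1$ via Theorem \ref{t:yes} together with monotonicity (Corollary \ref{d:mnt}), since $\aleph_{\alpha+\ell} = \aleph_{(\alpha+1)+(\ell-1)}$ and $\ell - 1 \ge k$. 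Both are immediate. The only other thing worth stating explicitly is the regularity/transfer remark ensuring the non-saturation (resp.\ saturation) conclusion is uniform over all models of $T_{k+1,k}$, not just a single witness.
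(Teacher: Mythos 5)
Your proposal matches the paper's own argument: the paper's proof of Theorem \ref{t:p2a} consists precisely of citing Conclusion \ref{c-2} for clause (a) and Conclusion \ref{c-1} for clause (b), and your unpacking (routing (b) through Lemma \ref{z4} and Theorem \ref{key-sat-1} so the conclusion holds for \emph{every} $(\lambda,\mu)$-perfected ultrafilter, not just the one \ref{c-1} asserts exists, and (a) through Claim \ref{f2a}, Theorem \ref{t:yes}, and Theorem \ref{t:separation}) is exactly how those conclusions are proved. The only caveat is your parenthetical that regularity of $\mu$ is ``automatic''---it is not, for general $\aleph_\alpha$---but this issue, if it is one, is present in the paper's own invocation of Lemma \ref{z4} and does not distinguish your argument from theirs.
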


\begin{proof}
(1) Conclusion \ref{c-2}. 

(2) Conclusion \ref{c-1}. 
\end{proof}

In fact, by the proofs, more is true: 

\begin{concl} 
Suppose we are given: 
\begin{enumerate}[label=\emph{(\alph*)}]
\item for some ordinal $\alpha$ and integer $\ell$, 
$\mu = \aleph_\alpha$, $\lambda = \aleph_{\alpha + \ell}$
\item $\de_1$ is built from $(\de_0, \ba_{2^\lambda, \mu}, \de)$ 
\item $T = T_{k+1,k}$ 
\end{enumerate}
Then:

\begin{enumerate}
\item if $k < \ell$, $\de_1$ is not $(\lambda^+, T)$-good. 

\item if $\ell < k$ and in addition $\de$ is $(\lambda, \mu)$-perfect,  
$\de_1$ is $(\lambda^+, T)$-good. 
\end{enumerate}
\end{concl}

\begin{theorem} 
\label{t:list} 
For any $k_* > 2$ and ordinal $\alpha$ there is a regular ultrafilter $\de$ on $\aleph_{\alpha + k_*}$ such that 
\begin{enumerate}
\item if $k_* < k_2$ then $\de$ is good for $T_{k_2, k_2+1}$
\item if $k_1 < k_*$ then $\de$ is not good for $T_{k_1, k_1+1}$.
\end{enumerate}
\end{theorem}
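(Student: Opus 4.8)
The plan is to read Theorem \ref{t:list} off directly from Theorem \ref{t:p2a} together with the existence result Theorem \ref{t:exists}, by fixing a single perfected ultrafilter on the cardinal $\aleph_{\alpha+k_*}$ and then letting the arity of the hypergraph vary.

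First I would set $\mu = \aleph_\alpha$, $\ell = k_*$, and $\lambda = \aleph_{\alpha+\ell} = \aleph_{\alpha+k_*}$, noting $\lambda > \mu$ since $k_* > 2$. By Theorem \ref{t:exists}, fix a $(\lambda,\mu)$-perfected --- hence regular --- ultrafilter $\de$ on $I = \lambda$. This $\de$ is the ultrafilter the theorem requires; the essential point is that it is chosen once and for all, before $k_1$ or $k_2$ are named, so that Theorem \ref{t:p2a} can be applied to it for each relevant arity.

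Next I would quote the two halves of Theorem \ref{t:p2a}, with $\ell = k_*$ held fixed throughout. For clause (2): given an integer $k_1$ with $2 \le k_1 < k_*$ (so that $T_{k_1+1,k_1}$ is nontrivial and $k_1 < \ell$), Theorem \ref{t:p2a}(a) gives that $\de$-ultrapowers of models of $T_{k_1+1,k_1}$ are not $\lambda^+$-saturated; since $\de$ is a regular ultrafilter on $\lambda$, this says precisely that $\de$ is not good for $T_{k_1+1,k_1}$. (For the degenerate value $k_1 = 1$ one instead notes that $T_{2,1}$ is $SOP_3$, hence non-simple, so by Theorem \ref{t:exists} $\de$ is not even $\mu^{++}$-good --- a fortiori not $\lambda^+$-good --- for it.) For clause (1): given $k_2 > k_*$, we have $\ell < k_2$, so Theorem \ref{t:p2a}(b) gives that $\de$-ultrapowers of models of $T_{k_2+1,k_2}$ are $\lambda^+$-saturated, that is, $\de$ is good for $T_{k_2+1,k_2}$.

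I do not expect a genuine obstacle: all of the difficulty has already been discharged in Conclusions \ref{c-1} and \ref{c-2}, and behind them in Lemma \ref{z4}, Theorem \ref{key-sat-1}, Claim \ref{f2a}, and the separation-of-variables and optimal/perfect ultrafilter machinery of \S\ref{s:various}. What this statement isolates is just the uniformity already present in Theorem \ref{t:p2a}: one $(\lambda,\mu)$-perfected ultrafilter on $\aleph_{\alpha+k_*}$ is simultaneously good for every $T_{k_2+1,k_2}$ with $k_2 > k_*$ and not good for every nontrivial $T_{k_1+1,k_1}$ with $k_1 < k_*$. This is the form convenient for separating Keisler classes: for $k_1 < k_* < k_2$ it produces a regular ultrafilter good for $T_{k_2+1,k_2}$ but not for $T_{k_1+1,k_1}$, hence $T_{k_1+1,k_1} \not\kleq T_{k_2+1,k_2}$, which then feeds into the proof of Theorem \ref{t:seq}.
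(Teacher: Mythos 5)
Your proof is correct and is exactly the paper's own argument: the paper proves Theorem \ref{t:list} by the one-line citation ``By Theorem \ref{t:p2a} and \S\ref{s:various} Theorem \ref{t:exists},'' and you have spelled out precisely that deduction (fix $\mu=\aleph_\alpha$, $\ell=k_*$, take a $(\lambda,\mu)$-perfected ultrafilter from Theorem \ref{t:exists}, then apply the two halves of Theorem \ref{t:p2a}). Your side remark about the degenerate $k_1=1$ case and your silent correction of the $T_{k,k+1}$/$T_{k+1,k}$ index order are both consistent with what the paper itself does elsewhere (e.g.\ the parenthetical at the end of Conclusion \ref{c-2} and the nontriviality convention in Definition \ref{d:m-k}).
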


\begin{proof}
By Theorem \ref{t:p2a} and \S \ref{s:various} Theorem \ref{t:exists}. 
\end{proof}

We now recall the definition of Keisler's order. 
For a current account of what is known, see \cite{MiSh:998} and for further intuition, see the introductory 
sections of \cite{MiSh:996}. Note that this allows us to compare any two theories, regardless of language. 

\begin{defn} \emph{(Keisler's order, Keisler 1967 \cite{keisler})} \label{k-order}
Let $T_1, T_2$ be complete countable theories. 
We write $T_1 \tlf T_2$ if: for any $\lambda \geq \aleph_0$, any $M_1 \models T_1$, any $M_2 \models T_2$ and any regular ultrafilter on $\lambda$,
\[ \mbox{if ${M_2}^\lambda/\de$ is $\lambda^+$-saturated then ${M_1}^\lambda/\de$ is $\lambda^+$-saturated.} \]
Here ``regular'' entails that the relation $\tlf$ is independent of the choice of $M_1$, $M_2$. 
\end{defn}

\begin{cor} \label{c42} 
Let $\tlf$ mean in Keisler's order. Then:
\begin{enumerate}
\item If $2 \leq k_1 < k_2$ then  
\[ T_{k_1, k_1+1} \not\tlf T_{k_2, k_2+1}. \] 
\item Keisler's $($partial$)$ order contains either an infinite descending chain or an infinite antichain within the simple unstable theories. 
\end{enumerate}
\end{cor}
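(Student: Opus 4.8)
The plan is to deduce part~(1) almost verbatim from Theorem~\ref{t:list} (equivalently, from Theorem~\ref{t:p2a}), and then to extract part~(2) from part~(1) by a one-line Ramsey argument; no new model-theoretic or ultrafilter construction is needed, since the content is already packaged in Theorems~\ref{t:p2a}, \ref{t:list}, which in turn rest on Theorems~\ref{key-sat-1}, \ref{t:exists} and Claim~\ref{f2a}. \emph{For part~(1)}, fix $2 \le k_1 < k_2$ and pick an integer $k_*$ with $k_1 < k_* < k_2$ (so $k_* \ge 3 > 2$; such an integer exists precisely when $k_2 \ge k_1 + 2$, which is all that part~(2) will require). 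For any ordinal $\alpha$, Theorem~\ref{t:list} supplies a regular ultrafilter $\de$ on $\lambda := \aleph_{\alpha + k_*}$ that is good for $T_{k_2,k_2+1}$ (because $k_* < k_2$) but not good for $T_{k_1,k_1+1}$ (because $k_1 < k_*$). Unwinding ``good for'', this says that for every $M_2 \models T_{k_2,k_2+1}$ the ultrapower $M_2^{\lambda}/\de$ is $\lambda^+$-saturated, while for every $M_1 \models T_{k_1,k_1+1}$ the ultrapower $M_1^{\lambda}/\de$ is not; as $\de$ is regular, this is exactly a witness, in the sense of Definition~\ref{k-order}, to $T_{k_1,k_1+1} \not\tlf T_{k_2,k_2+1}$.

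\emph{For part~(2)}, fix integers $2 \le m_0 < m_1 < m_2 < \cdots$ with $m_{i+1} - m_i \ge 2$ for all $i$, and set $T^{(i)} := T_{m_i,\, m_i+1}$. Each $T^{(i)}$ is simple --- indeed low, with trivial forking, by Hrushovski's theorem stated in the introduction --- and unstable, since it has the independence property. For $i < j$ we have $2 \le m_i < m_j$, so part~(1) gives $T^{(i)} \not\tlf T^{(j)}$. Now colour the pairs $\{i,j\} \in [\omega]^2$ (with $i<j$) by whether or not $T^{(j)} \tlf T^{(i)}$; in the ``not'' case $T^{(i)}, T^{(j)}$ are $\tlf$-incomparable, since already $T^{(i)} \not\tlf T^{(j)}$. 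By Ramsey's theorem there is an infinite $H \subseteq \omega$ homogeneous for this colouring. If $T^{(j)} \tlf T^{(i)}$ for all $i<j$ in $H$, then listing $H$ increasingly as $i_0 < i_1 < \cdots$ and using transitivity of $\tlf$ we get $\cdots \tlf T^{(i_2)} \tlf T^{(i_1)} \tlf T^{(i_0)}$, each step strict by part~(1) (as $T^{(i_r)} \not\tlf T^{(i_{r+1})}$), so $\{T^{(i)} : i \in H\}$ is an infinite strictly descending chain in Keisler's order. If instead $T^{(j)} \not\tlf T^{(i)}$ for all $i<j$ in $H$, then $\{T^{(i)} : i \in H\}$ is an infinite antichain in Keisler's order. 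Either way the theories involved are simple unstable, which is the assertion of part~(2).

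\emph{Main obstacle.} There is essentially no obstacle internal to this corollary --- the real work lies in the cited results: the saturation theorem (Theorem~\ref{key-sat-1}), the existence of perfected ultrafilters (Theorem~\ref{t:exists}), and the non-saturation result (Claim~\ref{f2a}). The only genuine point of care is that part~(1) establishes just the \emph{one-sided} relation $T_{k_1,k_1+1} \not\tlf T_{k_2,k_2+1}$, with nothing said about whether $T_{k_2,k_2+1} \tlf T_{k_1,k_1+1}$ holds; this is precisely why part~(2) must offer the ``chain \emph{or} antichain'' dichotomy and why the Ramsey step, rather than a direct construction of a strictly decreasing sequence, is the right tool. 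A minor bookkeeping point is that invoking part~(1) needs an integer strictly between the two indices, which is why part~(2) takes its indices $m_i$ to be spaced at least $2$ apart.
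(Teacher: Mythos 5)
Your proposal takes exactly the same route as the paper's own two-line proof: deduce part~(1) from Theorem~\ref{t:list} (equivalently Theorem~\ref{t:p2a} plus the existence theorem), then apply Ramsey's theorem for part~(2); and the Ramsey step, including the care about one-sidedness that forces the ``chain \emph{or} antichain'' formulation, is carried out correctly.

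What distinguishes your write-up from the paper's is that you have noticed a genuine imprecision in the paper's statement of part~(1). Theorem~\ref{t:list} separates $T_{k_1,k_1+1}$ from $T_{k_2,k_2+1}$ only when there is an integer $k_*$ with $k_1 < k_* < k_2$, i.e.\ only when $k_2 \geq k_1 + 2$. Indeed, Conclusion~\ref{c-1} (saturation) requires $\ell < k$ and Conclusion~\ref{c-2} (non-saturation) requires $\ell > k$: the ``boundary'' $\ell = k$ is never decided by the machinery (Lemma~\ref{z4} needs $\lambda \leq \mu^{+(k-1)}$, while Claim~\ref{f2a} via Theorem~\ref{t:yes} needs $\lambda \geq \mu^{+(k+1)}$; and Theorem~\ref{t:no} shows the set-mapping hypothesis actually fails at $\lambda = \mu^{+k}$). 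So the paper's claim ``(1) is immediate by~\ref{t:list}'' in fact only establishes part~(1) for $k_2 \geq k_1+2$, not for $k_2 = k_1+1$ as the corollary literally reads. You have correctly observed this, and correctly observed that the restricted form suffices for part~(2) and for Theorem~\ref{t:seq}; your device of spacing the indices by at least $2$ (rather than, say, taking them consecutive) is exactly what makes the Ramsey argument clean. In short: the proposal is correct, matches the paper's intent, and is somewhat more careful than the paper's own proof about the $k_2=k_1+1$ edge case, which is worth recording.
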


\begin{proof} (1) is immediate by \ref{t:list} and (2) follows by Ramsey's theorem. 
\end{proof} 

Note that Keisler's order is a partial order on equivalence classes of theories, 
and the following theorem proves \emph{existence} of an infinite descending chain 
in this partial order already within the simple unstable rank one theories; there may indeed be additional structure.  

\begin{theorem} 
\label{t:seq} 
There is an infinite descending sequence of simple rank 1 theories in Keisler's order. More precisely, 
there are simple theories $\{ T^*_n : n < \omega \}$ with trivial forking such that, writing
\begin{itemize}
\item $\mct_A$ for the class of theories without fcp
\item $\mct_B$ for the class of stable theories with fcp
\item $\mct_C$ for the minimum unstable class, i.e. the Keisler-equivalence class of the random graph
\item $\mct_{max}$ for the Keisler-maximal class, i.e. the Keisler-equivalence class of linear order $($or $SOP_2$$)$
\item and $\mct_n$ for the Keisler-equivalence class of $T^*_n$
\end{itemize} 
for all $ m < n < \omega$ we have:
\[ \mct_{A} \tlfn \mct_{B} \tlfn \mct_{C} \tlfn \hspace{15mm} \cdots \cdots \cdots \mct_{n} \tlfn \mct_m \tlfn \cdots \tlfn \mct_2 \tlfn \mct_1 \tlfn \mct_0 \tlfn \hspace{8mm} \mct_{max}. \]
\end{theorem}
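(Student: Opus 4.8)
The plan is to realize the sequence by the generic hypergraphs themselves. For $n<\omega$ set $T^*_n := T_{n+3,\,n+2}$; thus $T^*_n$ is the theory $T_{k+1,k}$ of Definition~\ref{d:m-k} with $k=n+2$, so the arity of the hyperedge grows by one at each stage and $T^*_0=T_{3,2}$ is the tetrahedron-free $3$-hypergraph discussed above. Each $T^*_n$ is simple with trivial forking by Hrushovski's theorem quoted above, and, exactly as for the generic hypergraphs, has $\textrm{SU}$-rank $1$ (algebraic closure is trivial, so every nonalgebraic forking extension of a $1$-type is algebraic). Let $\mct_n$ be the Keisler class of $T^*_n$ and $\mct_A,\mct_B,\mct_C,\mct_{max}$ be as in the statement. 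The left-hand segment $\mct_A\tlfn\mct_B\tlfn\mct_C$ is the classical computation of Keisler's order on the stable theories together with the random graph being the minimum unstable class (\cite{Sh:c}, Ch.~VI); and $\mct_0\tlfn\mct_{max}$ holds because $T^*_0$ is simple while there are regular ultrafilters saturating the ultrapowers of every simple theory but not those of some non-simple theory (e.g.\ optimal ultrafilters of \cite{MiSh:1030}), so no simple theory lies in the maximum class. Hence the theorem reduces to: (i) $T^*_n\tlf T^*_m$ for $m<n$; (ii) $T^*_m\not\tlf T^*_n$ for $m<n$; and (iii) $\mct_C\tlfn\mct_0$. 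Since $\tlf$ is transitive, (i) and (ii) for consecutive $m,n$ already upgrade to $\mct_n\tlfn\mct_m$ for all $m<n$, so it suffices to treat consecutive indices.

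Statement (ii) is Corollary~\ref{c42}(1) applied with $k_1=m+2<k_2=n+2$: by Theorem~\ref{t:list} — which rests on Theorem~\ref{t:p2a} and the existence Theorem~\ref{t:exists} — there is a regular (indeed $(\lambda,\mu)$-perfected, $\lambda=\aleph_{\alpha+\ell}$, $\mu=\aleph_\alpha$, with $m+2<\ell\le n+2$, say $\ell=m+3$) ultrafilter good for $T^*_n$ but not for $T^*_m$. Statement (iii) is by the same mechanism: $\mct_C\tlf\mct_0$ since the random graph is the minimum unstable class and $T^*_0$ is unstable, while a $(\lambda,\mu)$-perfected ultrafilter on $\aleph_{\alpha+\ell}$ with $\ell>2$ is good for the random graph (it is flexible, hence good for the minimum unstable class) but not good for $T_{3,2}=T^*_0$ by Theorem~\ref{t:p2a}(a), so $\mct_0\not\tlf\mct_C$.

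The substantive step is (i): $T_{k_2+1,k_2}\tlf T_{k_1+1,k_1}$ whenever $2\le k_1<k_2$, and this must hold for \emph{every} regular ultrafilter, so it is a genuine reduction, not merely a computation on perfected ultrafilters. The intended argument runs at the level of realizing types in ultrapowers. Since each $T_{k+1,k}$ is $\aleph_0$-categorical, goodness of a regular $\de$ for $T_{k+1,k}$ is equivalent to realizing, in every $M^\lambda/\de$, every $\Delta$-type over a parameter set of size $\le\lambda$ for $\Delta=\{R(x,\bar y),\neg R(x,\bar y)\}$, exactly as in the reduction opening the proof of Theorem~\ref{key-sat-1}; and the only ways such a type can be inconsistent are a collision of parameters or a complete $(k+1)$-uniform hypergraph on some $k+2$ vertices all of whose $k$-faces carry a required edge from the realizing variable — precisely the two cases neutralized in Theorem~\ref{key-sat-1}. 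The crucial combinatorial input is Corollary~\ref{d:mnt}: a complete $(k_2+1)$-uniform hypergraph on $k_2+2$ vertices restricts to a complete $(k_1+1)$-uniform hypergraph on $k_1+2$ vertices, so a $T_{k_2+1,k_2}$-obstruction \emph{projects} onto a $T_{k_1+1,k_1}$-obstruction. Transporting an $R$-type in an ultrapower of a model of $T_{k_2+1,k_2}$ through the \Los map $s\mapsto B_s$ to the associated possibility pattern (Definition~\ref{d:poss}), one checks — after this projection of parameter tuples together with the bookkeeping of which tuples coincide and which carry an edge — that it is simultaneously a possibility pattern for $T_{k_1+1,k_1}$; a multiplicative refinement of it exists because $\de$ is good for $T_{k_1+1,k_1}$, and by the projection that refinement already realizes the original $T_{k_2+1,k_2}$-type. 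Because the whole passage uses only the \Los map and the hypergraph combinatorics, it is uniform in $\de$, giving the Keisler inequality.

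The main obstacle is exactly this reduction (i). Two delicate points: (a) making precise that a $T_{k_2+1,k_2}$-possibility pattern pulls back to an honest $T_{k_1+1,k_1}$-possibility pattern in the sense of Definition~\ref{d:poss}, correctly matching parameter tuples of the two lengths and propagating both the collision data and the $K_{k+2}$-data; and (b) keeping the argument at the level of ultrapowers and the \Los map (rather than routing through Theorem~\ref{t:separation}, which is stated for ultrafilters built from a tuple), so that it covers all regular ultrafilters. With (i) established, everything else is assembly of the results quoted above, and Ramsey's theorem (invoked for Corollary~\ref{c42}(2)) is not needed, since $\langle T^*_n : n<\omega\rangle$ is itself the descending chain.
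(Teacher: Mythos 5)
Your choice of theories $T^*_n := T_{n+3,n+2}$ and the resulting reduction to your clause (i) — that $T_{k_2+1,k_2}\tlf T_{k_1+1,k_1}$ for all $k_1<k_2$ and all regular ultrafilters — is where the proposal breaks. That inequality is not proved anywhere in the paper, and in fact the paper explicitly declines to claim it: Corollary~\ref{c42}(2), proved via Ramsey's theorem, asserts only that the $T_{k+1,k}$ contain an infinite descending chain \emph{or} an infinite antichain, precisely because the authors do not know which way the pairwise "undecided" direction goes. Your informal projection argument does not fill this hole. A $T_{k_2+1,k_2}$-possibility pattern is indexed by $k_2$-element parameter sets with $(k_2{+}1)$-ary edge data, and a $T_{k_1+1,k_1}$-pattern by $k_1$-element sets with $(k_1{+}1)$-ary data; there is no canonical choice of $k_1$-subset of a $k_2$-tuple, no evident way to push forward a multiplicative refinement through such a "projection," and the cited Corollary~\ref{d:mnt} concerns free sets for set mappings, not a functorial reduction between possibility patterns. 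You yourself flag (a) and (b) as unresolved; they are not minor bookkeeping but the entire content of clause (i), and the paper supplies no tool for them. (The paper's Thesis~\ref{m:thesis} is exactly a statement of this phenomenon and is marked as informal — a thesis, not a theorem.)

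The paper sidesteps this issue entirely by a different construction of the $T^*_n$: it defines $T^*_n$ to be the \emph{disjoint union} (in disjoint signatures) of the theories $T_{k+1,k}$ for $k>n$. With that definition the inclusion $T^*_{n'}\tlf T^*_n$ for $n'>n$ is essentially trivial — saturating a model of $T^*_n$ forces saturating each of its disjoint pieces, and $T^*_{n'}$ consists of a subfamily of the pieces of $T^*_n$ — so no new ultrapower argument is needed. Strictness $\mct_n\neq\mct_m$ for $m<n$ then follows from Theorem~\ref{t:list}: an appropriately chosen $(\lambda,\mu)$-perfected ultrafilter is good for every piece $T_{k+1,k}$ with $k$ above a threshold and fails for some piece below it, hence is good for $T^*_n$ but not for $T^*_m$. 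You should adopt this disjoint-union definition of $T^*_n$; then the remainder of your assembly (the segment $\mct_A\tlfn\mct_B\tlfn\mct_C$ from classical results, $\mct_C\tlfn\mct_0$ and $\mct_0\tlfn\mct_{max}$ from perfected ultrafilters, and $\mct_n\tlfn\mct_m$ from Theorem~\ref{t:list}) goes through as you describe, and the Ramsey alternative in Corollary~\ref{c42}(2) is indeed not needed.
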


\begin{proof}
The structure of the order on $\mct_{A}$, $\mct_{B}$, $\mct_{C}$, $\mct_{max}$ was known, see \cite{MiSh:996} \S 4. 
To obtain the infinite descending chain, let $T^*_n$ be the disjoint union of the theories $T_{k,k+1}$ for $k > n$. 
Here ``disjoint union'' is understood naturally, for instance, the theory of the model $M$ formed by taking the disjoint union of models $M_k \models T_{k, k+1}$
in disjoint signatures.  Clearly, $k^\prime > k$ implies  $T^*_{k^\prime} \tlf T^*_{k}$ and $\tlfn$ is by Theorem \ref{t:list}. 
This completes the proof. 
\end{proof}

\end{document}